\documentclass[a4paper, 11pt, final,underruledhead]{paper}
\pagestyle{titleinhead}

\usepackage{amssymb}
\usepackage{amsmath}
\usepackage{amsthm}
\usepackage{math}
\usepackage[mathscr]{euscript}
\usepackage{enumerate}
\usepackage[all]{xy}
\usepackage{graphics}
\usepackage[dvips]{graphicx}

\CompileMatrices

\newif\ifhozna \hoznafalse

\newif\ifcdn\cdnfalse

\begin{document}



\let\goth\mathfrak

\def\myend{{}\hfill{\small$\bigcirc$}}

\newtheorem{reprx}[thm]{Representation}
\newenvironment{repr}{\begin{reprx}\normalfont}{\myend\end{reprx}}
\newtheorem{cnstrx}[thm]{Construction}
\newenvironment{constr}{\begin{cnstrx}\normalfont}{\myend\end{cnstrx}}
\def\classifname{Classification}
\newtheorem{classification}[thm]{\classifname}
\newenvironment{classif}{\begin{classification}\normalfont}{\myend\end{classification}}
\newcounter{facto}
\newenvironment{facto}[1]{%
\refstepcounter{facto}\begin{fact*}\space{\bf {#1}.\thefacto.}\space}{%
\end{fact*}}

\def\myend{{}\hfill{\small$\bigcirc$}}

\newenvironment{ctext}{%
  \par
  \smallskip
  \centering
}{%
 \par
 \smallskip
 \csname @endpetrue\endcsname
}

\newcounter{typek}\setcounter{typek}{0}
\def\thetypek{\roman{typek}}
\def\typitem#1{\refstepcounter{typek}\item[\normalfont(\roman{typek};\hspace{1ex}{#1})\quad]}
\newenvironment{typcription}{\begin{description}\itemsep-2pt\setcounter{typek}{0}}{%
\end{description}}
\newcounter{ostitem}\setcounter{ostitem}{0}

\def\id{\mathrm{id}}

\newcommand{\msub}{\mbox{\large$\goth y$}}    
\def\suport{{\mathrm{supp}}}
\newcommand{\nat}{{N}}   
\def\bagnom#1#2{\genfrac{[}{]}{0pt}{}{#1}{#2}}

\def\kros(#1,#2){c_{\{ #1,#2 \}}}
\def\skros(#1,#2){{\{ #1,#2 \}}}
\def\LineOn(#1,#2){\overline{{#1},{#2}\rule{0em}{1,5ex}}}
\def\inc{\mathrel{\strut\rule{3pt}{0pt}\rule{1pt}{9pt}\rule{3pt}{0pt}\strut}}
\def\lines{{\cal L}}
\def\linessub{{\cal G}}
\def\kliki{{\cal K}}
\def\rozby{{\mathscr G}}
\def\tran{{\mathscr T}}
\def\collin{\sim}
\def\wspolin{{\bf L}}
\def\ncollin{\not\collin}

\def\VerSpace(#1,#2){{\bf V}_{{#2}}({#1})}
\def\GrasSpace(#1,#2){{\bf G}_{{#2}}({#1})}
\def\GrasSpacex(#1,#2){{\bf G}^\ast_{{#2}}({#1})}
\def\VeblSpSymb{{\bf P}\mkern-10mu{\bf B}}
\def\VeblSpace(#1){\VeblSpSymb({#1})}
\def\xwpis#1#2{{{\triangleright}\mkern-4mu{{\strut}_{{#1}}^{{#2}}}}}
\def\MultVeblSymb{{\sf M}\mkern-10mu{\sf V}}        
\def\MultVeblSymq{{\sf T}\mkern-6mu{\sf P}}        
\def\xwlep(#1,#2,#3,#4){{\MultVeblSymb^{#1}{\xwpis{#2}{#3}}{#4}}}
\def\xwlepp(#1,#2,#3,#4,#5){{\MultVeblSymb_{#1}^{#2}{\xwpis{#3}{#4}}{#5}}}
\def\xwlepq(#1,#2,#3,#4,#5){{\MultVeblSymq_{#1}^{#2}{\xwpis{#3}{#4}}{#5}}}
\def\konftyp(#1,#2,#3,#4){\left( {#1}_{#2}\, {#3}_{#4} \right)}
\newcount\liczbaa
\newcount\liczbab
\def\binkonf(#1,#2){\liczbaa=#2 \liczbab=#2 \advance\liczbab by -2
\def\doa{\ifnum\liczbaa = 0\relax \else
\ifnum\liczbaa < 0 \the\liczbaa \else +\the\liczbaa\fi\fi}
\def\dob{\ifnum\liczbab = 0\relax \else
\ifnum\liczbab < 0 \the\liczbab \else +\the\liczbab\fi\fi}
\konftyp(\binom{#1\doa}{2},#1\dob,\binom{#1\doa}{3},3) }
\let\binconf\binkonf

\def\perspace(#1,#2){\mbox{\boldmath$\Pi$}({#1},{#2})}
\def\vergras{{\goth R}}
\def\starof(#1){{\mathop{\mathrm S}(#1)}}
\def\topof(#1){{\mathop{\mathrm T}(#1)}}


\def\Ver{{\sf\bfseries VER}}
\def\Des{{\sf\bfseries DES}}
\def\Desv{{\sf\bfseries DES'}}
\def\Desr{{\sf\bfseries DES''}}
\def\MTC{{\sf STP}}
\def\MVC{{\sf MVC}}

\def\psts{partial Steiner triple system}
\def\PSTS{{\sf PSTS}}
\def\BSTS{{\sf BSTS}}
\def\asumpt{\rm({\sf P})}
\def\desAx{\rm({\sf Des})}

\def\brak{\relax} 


\ifcdn\bgroup  

\title[Configurations representing a skew perspective]{%
Configurations representing a skew perspective}
\author{Kamil Maszkowski, Ma{\l}gorzata Pra{\.z}mowska, Krzysztof Pra{\.z}mowski}

\maketitle

\begin{abstract}
  A combinatorial object representing schemas of, possibly skew, perspectives,
  called {\em a configuration of skew perspective} is defined.
  Some classifications of skew perspectives are presented.
\newline
{\em key words}: Veblen (Pasch) configuration, (generalized) Desargues configuration,
binomial configuration, complete (free sub)subgraph, perspective.
\\
MSC(2000): 05B30, 51E30.
\end{abstract}

\section*{Introduction}

The term {\em perspective}, the title subject of this paper, 
is used, primarily, in architecture drawings and, after that,
in descriptive and projective geometry.
It refers, in fact, to a (central) projection i.e. to a correspondence between
objects of one space (points, lines, planes, spheres, $\ldots$) and objects of another
space, while both two are subspaces of a third one (``the real world'').
Such a projection is central if the lines which join corresponding points
meet in a ``center'' (an `eye').
In investigations of projective geometry central projections between lines,
planes etc. play a crucial role; e.g. they are used to characterize so called
projective collineations, projective correspondence, and similar notions of projective
geometry
(see standard textbooks like \cite{projmono1}, \cite{projmono2},
textbooks on general geometry like \cite{hilbert}, or more advanced
investigations in \cite{projectiv}, \cite{coxdes}).
Projections are also used to characterize Pasch property (invariance of an order)
in projective and chain geometries (see e.g. \cite{projchain}).
And in many other places.
Roughly speaking, a projection is a local linear collineation.

\medskip
One can talk about perspective in a more general settings of (finite) systems of 
points: of configurations, or even: of graphs.
General requirements that should be met by such a perspective we formulate 
in \asumpt\ in Section \ref{sec:under}.
And the most `instructive' and `vivid' example that one should have in mind is the classical
Desargues configuration considered as a perspective between two triangles
(see e.g. \cite[Ch. III, \S 19]{hilbert}).
This configuration was generalized in many directions, e.g. to take into account 
a perspective between $m$-simplices that may be realized in a projective space 
(see \cite{perspect}, \cite{mveb2proj}, \cite[Generalized Desargues Configuration]{doliwa2}).
As we said, a perspective $\pi$ is a local collineation: while defined primarily
on the points it extends uniquely to a map $\overline{\pi}$ defined on more complex objects
such as lines (planes, chains and so on).
\par
It appears even in the smallest reasonable case of ${10}_{3}$-configurations that 
the Desargues configuration has two cousins, both two realizable in a projective plane
over a field, such that the associated perspectives $\overline{\pi}$ are skew.
Namely, the points in which intersect sides of a triangle and their images under
$\overline{\pi}$ do not colline.
However, another correspondence $\xi$ can be found, $\xi \neq \overline{\pi}$
such that a side $e$ of a triangle and $\xi(e)$ intersect on a fixed line (an `axis').

\medskip
Is it possible to generalize this class of perspectives to `bigger' simplices?
Formally, the answer is trivial: it suffices to introduce
suitable (`constructive') definition (see Construction \ref{def:pers}).
After that, the natural question aries, how to classify the obtained structures,
how to characterize them and their geometry.
Some partial answers to these questions  are given in this paper.
First, we note that, from a general perspective, our perspectives are exactly
the binomial \psts s which freely contain at least two maximal complete graphs (see \cite{klik:binom}).
From this point of view, a complete characterization of 
all the skew perspectives seems far to reach.
Applying the requirement that $\xi$ extends to the line graphs of the respective
simplices (i.e. $\xi$ maps concurrent edges onto concurrent edges)
we arrive much closer to a complete classification of skew perspectives.
In particular, we obtain such a classification for perspectives whose axes are 
generalized Desargues configurations (Prop. \ref{prop:class-grasaxis}).
%
%
In Section \ref{subsec:konter} we compare the obtained perspectives with
some other known $\konftyp(15,4,20,3)$-configurations.
Examples show that 
a variety of quirks may appear,
a configuration in question may be represented in a `regular' way and - with other
centre chosen - as a perspective with quite irregular skew.

\medskip
The question which of our perspectives are simultaneously multiveblen configurations
(another class of {\psts s} generalizing a projective perspective, introduced in \cite{pascvebl})
is discussed in Proposition \ref{prop:pers-mveb}.
The natural question which of so generalized perspectives can be realized in a
Desarguesian projective space is discussed in Section \ref{ssec:pers2proj}
and is completely solved in case of $\konftyp(15,4,20,3)$-skew-perspectives.
Some remarks on configurational axioms associated with our configurations are made
in Section \ref{ssec:axioms}.



\section{Underlying ideas and basic definitions}\label{sec:under}

Let us begin with introducing some, standard, notation.
Let $X$ be an arbitrary set.
The symbol $S_X$ stands for the family of permutations of $X$.
Let $k$ be a positive integer; 
we write $\sub_k(X)$ for the family of $k$-element subsets of $X$.
Then $K_X = \struct{X,\sub_2(X)}$ is the complete graph on $X$;
$K_n$ is $K_X$ for any $X$ with $|X| = n$. Analogously, $S_n = S_X$.
\par\noindent
A {\em $\konftyp(\nu,r,b,\varkappa)$-configuration} is a configuration 
(a {\em partial linear space} i.e. an incidence structure with blocks ({\em lines})
pairwise intersecting in at most a point)
with
$\nu$ points, each of rank $r$, and $b$ lines, each of rank (size) $\varkappa$.
A {\em \psts}\ (in short: a \PSTS) is a partial linear space 
with all the lines of size $3$.
A $\binkonf(n,0)$-configuration is a \psts, it is called 
a {\em binomial \psts}.
\par\noindent
We say that a graph $\cal G$ is {\em freely contained} in a 
configuration $\goth B$ iff 
the vertices of $\cal G$ are points of $\goth B$, 
each edge $e$ of $\cal G$ is contained in a line $\overline{e}$ of $\goth B$,
the above map $e \mapsto \overline{e}$ is an injection, 
and lines of $\goth B$ which contain disjoint edges of $\cal G$ do not 
intersect in $\goth B$.
If $\goth B$ is a $\binkonf(n,0)$-configuration and ${\cal G} = K_X$
then $|X| + 1 \leq n$. Consequently, $K_{n-1}$ is a maximal complete graph
freely contained in a binomial $\binkonf(n,0)$-configuration.
Further details of this theory are presented in \cite{klik:binom},
relevant results will be quoted in the text, when needed.

\bigskip
In the paper we aim to develop a theory of configurations which characterize abstract
properties of a perspective between two graphs.
Let us start with the following general (evidently: unprecise yet) requirements.
%
\begin{quotation}\em\noindent
  When we talk about a {\em perspective} between two graphs 
  ${\goth G}_1$ and ${\goth G}_2$, where 
  $X_i$ is the the set of vertices  
  and ${\cal E}_i$ is the set of edges of ${\goth G}_i$
  then we have
  \begin{itemize}\itemsep-2pt\def\labelitemi{--}
    \item
      a {\em perspective center}: a point $p$ such that {\em perspective rays},
      lines through $p$ establish a one-to-one correspondence 
      $\pi$ ({\em point perspective}) between $X_1$ and $X_2$,
      and
    \item
      an {\em axis}: a configuration (disjoint with $X_1\cup X_2$) such that
      a one-to-one correspondence 
      $\xi$ ({\em line perspective}) between ${\cal E}_1$ and ${\cal E}_2$
      is characterized by the condition:
      \begin{quotation}\noindent
        an edge in ${\cal E}_1$ and its counterpart in ${\cal E}_2$, suitably extended,
        intersect on the axis.
      \end{quotation}
  \end{itemize}
(comp. \cite[Prop. 2.6]{klik:binom}, 
\cite[Repr. 2.4, Repr. 2.5]{skewgras} or standard textbooks on projective geometry, e.g.
\cite{projmono1},\cite{projmono2}).
\quad\quad\strut\hfill\asumpt 
\end{quotation}
The associated configuration consists of the points in $X_1\cup X_2$ completed by 
the center and the intersections of extended edges, and the minimal amount of the lines
which join these intersection points.

This approach is, however, too general.
We want our perspective to yield a {\em regular configuration} i.e. a one with 
all the points of the same rank.
It is seen that the size of the lines must be $3$.
The rank of the perspective center is $n = |X_1| = |X_2|$, therefore the rank of 
$a\in X_1$ in ${\goth G}_1$ must be $n-1$ and therefore ${\goth G}_1$ and ${\goth G}_2$
both are complete $K_{n}$-graphs.
So, unhappily, only perspectives between complete graphs can be characterized
in accordance with our requirements \asumpt.
On the other hand, this restriction leads us to a quite nice part of the theory 
of configurations.

\medskip
So, let us pass to a more exact formulation of  requirements \asumpt. 
\begin{constr}\label{def:pers}
  Let $I$ be a nonempty finite set, $n := |I|$.
  In most parts, without loss of generality, we assume that
  $I = I_n =  \{ 1,\ldots,n \}$.
  Let 
    $A = \{a_i\colon i\in I \}$ and $B = \{b_i\colon i\in I\}$
  be two disjoint $n$-element sets, let $p\notin A\cup B$.
\newline
  Then we take a $\binom{n}{2}$-element set
  $C = \{c_u\colon u\in\sub_2(I)\}$
  disjoint with $A\cup B\cup\{p\}$.
  Set 
\begin{equation*}
  {\cal P} = A\cup B\cup\{p\}\cup C.
\end{equation*}
  Let us fix a permutation $\sigma$ of $\sub_2(I)$ and write
\begin{eqnarray*}
  {\cal L}_p & := & \big\{ \{ p,a_i,b_i \}\colon i\in I \big\},
  \\
  {\cal L}_A & := & \big\{ \{ a_i,a_j,c_{\{ i,j \}} \}\colon \{i,j\}\in\sub_2(I) \big\},
  \\
  {\cal L}_B & := & \big\{ \{ b_i,b_j,c_{\sigma^{-1}(\{ i,j \})} \} \colon \{i,j\}\in\sub_2(I) \big\}.
\end{eqnarray*}
  Finally, let ${\cal L}_C$ be a family of $3$-subsets of $C$ such that
  ${\goth N} = \struct{C,{\cal L}_C}$ is a 
  $\binconf(n,0)$-configuration.
  Set
$$
  {\cal L} = {\cal L}_p \cup {\cal L}_A\cup {\cal L}_B\cup {\cal L}_C
  \text{ and }
  \perspace(n,\sigma,{\goth N}) := \struct{{\cal P},{\cal L}}.
$$
  The structure $\perspace(n,\sigma,{\goth N})$ will be referred to as a 
  {\em skew perspective} with the {\em skew} $\sigma$.
\end{constr}
We frequently shorten $c_{\{i,j\}}$ to $c_{i,j}$.
In many cases, the parameter $\goth N$ will not be essential and then it will be omitted,
we shall write simply $\perspace(n,\sigma)$.
In essence, the names "$a_i$", "$c_{i,j}$" are -- from the point of view of 
mathematics -- arbitrary, and could be replaced by any other labelling
(cf. analogous problem of labelling in \cite[Constr. 3, Repr. 3]{pascvebl}
or in \cite[Rem 2.11, Rem 2,13]{STP3K5}, \cite[Exmpl. 2]{pascvebl}).
Formally, one can define $J = I \cup \{a,b\}$,
$x_i = \{x,i\}$ for $x\in\{a,b\} =: p$ and $i\in I$, and $c_u = u$ for $u\in \sub_2(I)$.
After this identification $\perspace(n,\sigma)$ becomes a structure defined
on $\sub_2(J)$.
Then, it is easily seen that
\begin{equation}\label{paramy}
  \perspace(n,\sigma,{\goth N}) \text{ is a }\binconf(n,+2)\text{ configuration}.
\end{equation}
In particular, it is a \psts\ (a partial linear space),
so we can use standard notation:
$\LineOn(x,y)$ stands for the line which joins two collinear points $x,y\in{\cal P}$,
and then we define the partial operation $\oplus$ with the following requirements:
$x\oplus x = x$, $\{x,y,x\oplus y\}\in{\cal L}$ whenever $\LineOn(x,y)$ exists.
%
Observe then that 
(cf. \cite[Eq. (1), the definition of 
{\em combinatorial Grassmannian} $\GrasSpace(n,2)$]{perspect})
\begin{equation}\label{gras:pers}
  \GrasSpace(n+2,2) = \GrasSpace(J,2) = \struct{\sub_2(J),\sub_3(J),\subset}
  \cong \perspace(n,\id_{I_n},{\GrasSpace(I_n,2)}).
\end{equation}

It is clear that $A^\ast = A \cup \{p\}$ and $B^\ast = B\cup\{p\}$ are two $K_{n+1}$-graphs
freely contained in $\perspace(n,\sigma)$.
Applying the results \cite[Prop. 2.6 and Thm. 2.12]{klik:binom} 
we immediately obtain the following.
\begin{fact}
  Let $N = n+2$. The following conditions are equivalent.
  \begin{sentences}\itemsep-2pt
  \item
    $\goth M$ is a binomial $\binkonf(N,0)$-configuration which freely contains 
    two $K_{N-1}$-graphs.
  \item
    ${\goth M}\cong \perspace(n,\sigma,{\goth N})$ for a $\sigma\in S_{\sub_2(I)}$
    and a $\binkonf(n,0)$-configuration $\goth N$ defined on $\sub_2(I)$.
  \end{sentences}
\end{fact}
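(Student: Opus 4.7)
The strategy is to verify both implications of the equivalence, with the substantive work packaged into the cited \cite[Prop. 2.6 and Thm. 2.12]{klik:binom}. The direction (ii)$\Rightarrow$(i) is a direct verification on Construction \ref{def:pers}: one counts $|{\cal P}| = 2n+1+\binom{n}{2} = \binom{N}{2}$ and $|{\cal L}| = n + 2\binom{n}{2} + \binom{n}{3} = \binom{N}{3}$, checks that every line has size $3$, and checks that every point has rank $N-2=n$. Indeed, each $a_i$ (resp.\ $b_i$) lies on one line of ${\cal L}_p$ and on $n-1$ lines of ${\cal L}_A$ (resp.\ ${\cal L}_B$); $p$ lies on exactly the $n$ lines of ${\cal L}_p$; and each $c_u$ lies on one line of ${\cal L}_A$, one of ${\cal L}_B$, and $n-2$ of ${\cal L}_C$ (since $\goth N$ is a $\binkonf(n,0)$-configuration). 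The free containment of $A^\ast$ and $B^\ast$ as $K_{N-1}$-graphs is then exhibited directly by ${\cal L}_A\cup{\cal L}_p$ and ${\cal L}_B\cup{\cal L}_p$, as noted just before the statement.

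For (i)$\Rightarrow$(ii), fix freely contained $K_{N-1}$-graphs ${\cal A}, {\cal B}$ in $\goth M$. By \cite[Prop. 2.6]{klik:binom} they share a unique common vertex $p$; set $A := V({\cal A})\setminus\{p\}$ and $B := V({\cal B})\setminus\{p\}$, both of size $n$. The rank of $p$ in $\goth M$ equals $N-2=n$, and the $n$ $\cal A$-edges incident to $p$ already account, by free containment, for $n$ distinct lines through $p$; so these are \emph{all} lines through $p$. The symmetric assertion holds for $\cal B$. Consequently the two families of lines through $p$ coincide, so every such line has the form $\{p,a,b\}$ with a unique pair $a\in A$, $b\in B$. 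A suitable relabelling gives $A = \{a_i\}_{i\in I}$, $B = \{b_i\}_{i\in I}$ with ${\cal L}_p = \{\{p,a_i,b_i\}:i\in I\}$.

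Each $\cal A$-edge $\{a_i,a_j\}$ extends uniquely to a line of $\goth M$ with a third point $c_{\{i,j\}}$. This point cannot lie in $A\cup\{p\}$ (else three $\cal A$-edges would coincide on a single line, violating injectivity of free containment) nor in $B$ (the rank of the hit $b_k$ would exceed $n$). Hence $c_{\{i,j\}}$ lies in the complement $C := {\cal P}({\goth M}) \setminus (A\cup B\cup\{p\})$, which has exactly $\binom{n}{2}$ elements by the overall point count. Free containment of $\cal A$ makes $\{i,j\}\mapsto c_{\{i,j\}}$ injective, hence a bijection onto $C$; this labels $C = \{c_u : u\in\sub_2(I)\}$ and produces ${\cal L}_A$. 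Running the symmetric argument on $\cal B$ yields a second bijection $\sub_2(I)\to C$; composing with the inverse of the first gives the required skew $\sigma\in S_{\sub_2(I)}$, and reading off the third points yields ${\cal L}_B$.

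Finally, every line of $\goth M$ not yet enumerated must have all three points in $C$, since each point of $A\cup B\cup\{p\}$ is now rank-saturated. A count shows exactly $\binom{n}{3}$ such residual lines remain, and the substructure ${\goth N} := \struct{C,{\cal L}_C}$ is a \psts\ on $\binom{n}{2}$ points of uniform rank $n-2$; by \cite[Thm. 2.12]{klik:binom} this is a $\binkonf(n,0)$-configuration. The assembled data realize the isomorphism ${\goth M}\cong\perspace(n,\sigma,{\goth N})$. The main technical input comes from Prop. 2.6 (packaging the rank-based matching through $p$) and Thm. 2.12 (recognising the residual structure on $C$ as binomial); both fit the present situation with only routine translation.
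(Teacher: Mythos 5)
Your proof is correct and follows essentially the same route as the paper, which disposes of this Fact in one line by observing that $A^\ast$ and $B^\ast$ are freely contained $K_{n+1}$-graphs and then invoking Prop.~2.6 and Thm.~2.12 of the cited work on binomial partial Steiner triple systems containing complete graphs; you have simply unpacked the parameter counting, the rank-saturation argument at $p$ and at the vertices of $A\cup B$, and the reconstruction of the labelling of $C$ and of the skew $\sigma$ that those citations compress. Nothing in your argument conflicts with the paper's intended derivation.
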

Consequently, the configurations defined by \ref{def:pers} 
are essentially known, 
but no {\em general} classification of them is known, though.

The map
$$
  \pi = \big( a_i \longmapsto b_i,\; i\in I \big)
$$
is a point-perspective of $K_A$ onto $K_B$ with center $p$.
Moreover, the map
$$
  \xi = \big( \LineOn(a_i,a_j) \longmapsto \LineOn(b_{i'},b_{j'}), \;
  \sigma( \{ i,j \} ) = \{i',j'\} \in \sub_2(I) \big)
$$
is a line perspective, where $\goth N$ is the axial configuration of our perspective.
Consequently, $\perspace(n,\sigma,{\goth N})$ satisfies the requirement \asumpt\ i.e.
it is a schema of a perspective of some type.
Contrary to the approach of \cite{klik:binom},
following the approach of this paper we can 
better analyze some particular properties of the perspective $(\pi,\xi)$.
\begin{lem}\label{lem:meetinvar}
  The map $\xi$ maps intersecting edges of $K_A$ onto intersecting edges of $K_B$
  iff either
  \begin{sentences}
    \item\label{lem1:cas1}
      there is a permutation $\sigma_0\in S_I$ such that 
      \begin{equation}\label{2perm}
          \sigma(\{ i,j \}) = \overline{\sigma_0}(\{ i,j \})
	   =\{ \sigma_0(i),\sigma_0(j) \}
      \end{equation}
      for every $\{i,j\}\in\sub_2(I)$, or
    \item\label{lem1:cas2}
      $n = 4$ and $\sigma(u) = I\setminus \overline{\sigma_0}(u)$
      for every $u\in\sub_2(I)$, where $\overline{\sigma_0}$ is defined by \eqref{2perm}
      for some $\sigma_0\in S_I$.
  \end{sentences}
  In case \eqref{lem1:cas1}, $\xi$ preserves the (ternary) concurrency of edges,
  and in case \eqref{lem1:cas2}, the concurrency is not preserved.
\end{lem}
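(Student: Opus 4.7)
The plan is to translate the hypothesis on $\xi$ into a combinatorial condition on $\sigma$. Since $\xi$ acts on edges via $\sigma$ and two edges $u,v\in\sub_2(I)$ of $K_A$ (or $K_B$) share a vertex precisely when $|u\cap v|=1$, the stated requirement is
\begin{equation*}
  |u\cap v| = 1 \iff |\sigma(u)\cap\sigma(v)| = 1
  \quad\text{for all } u,v\in\sub_2(I),
\end{equation*}
i.e.\ that $\sigma$ is an automorphism of the triangular (Johnson) graph $T(n)$ on $\sub_2(I)$, where two $2$-subsets are adjacent iff they share a single element. Thus the whole lemma reduces to computing $\mathrm{Aut}(T(n))$.

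That both maps of types \eqref{lem1:cas1} and \eqref{lem1:cas2} preserve $T(n)$ is immediate: $\overline{\sigma_0}$ by transport of structure, and the complementation $u\mapsto I\setminus u$ (available only when $n=4$, so that the complement of a $2$-set is again a $2$-set) since it sends pairs sharing one element to pairs sharing one element. For the converse I would first classify maximal cliques of $T(n)$ into two types: the \emph{stars} $\starof(j):=\{u\in\sub_2(I):j\in u\}$ of size $n-1$, and the \emph{triangle cliques} $\{\{a,b\},\{b,c\},\{a,c\}\}$ of size $3$. For $n\ge 5$ we have $n-1\ge 4>3$, so any $\sigma\in\mathrm{Aut}(T(n))$ must send stars to stars; the induced permutation of stars gives a $\sigma_0\in S_I$, and since $u=\{i,j\}$ is the unique common element of $\starof(i)$ and $\starof(j)$, we recover $\sigma(u)=\overline{\sigma_0}(u)$, yielding case \eqref{lem1:cas1}.

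The only genuine obstacle is the exceptional case $n=4$: then $T(4)$ is the octahedron $K_{2,2,2}$, stars and triangle cliques both have size $3$, and $|\mathrm{Aut}(T(4))|=48=2\cdot|S_4|$. Here I would explicitly exhibit the antipodal map $u\mapsto I\setminus u$ (which swaps the class of stars with the class of triangle cliques) as the extra generator; together with the $\overline{\sigma_0}$'s it accounts for all $48$ automorphisms, so every $\sigma\in\mathrm{Aut}(T(4))$ has the form $\overline{\sigma_0}$ or $u\mapsto I\setminus\overline{\sigma_0}(u)$, which is precisely \eqref{lem1:cas1} or \eqref{lem1:cas2}.

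Finally, for the concurrency statement: in case \eqref{lem1:cas1} three edges through a common vertex $j$ map to three edges through $\sigma_0(j)$, so ternary concurrency is preserved. In case \eqref{lem1:cas2} with $I=\{i,j,k,l\}$, the three edges through $j$ are $\{i,j\},\{j,k\},\{j,l\}$ and map to $\{\sigma_0(k),\sigma_0(l)\},\{\sigma_0(i),\sigma_0(l)\},\{\sigma_0(i),\sigma_0(k)\}$; these pairwise share a vertex but have no common element, so ternary concurrency fails.
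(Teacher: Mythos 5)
Your proof is correct, and it begins with exactly the same reduction as the paper: identify edges of $K_A$ and of $K_B$ with elements of $\sub_2(I)$, so that the hypothesis on $\xi$ becomes the statement that $\sigma$ is an automorphism of the intersection graph (the triangular/Johnson graph $T(n)$) on $\sub_2(I)$. The difference is what happens next: the paper stops there and dismisses the classification of these automorphisms as ``folklore,'' citing Klin--P\"oschel--Rosenbaum and two earlier papers of the authors, whereas you actually prove it, via the classification of maximal cliques of $T(n)$ into stars $\starof(j)$ (size $n-1$) and triangles (size $3$), the observation that for $n\geq 5$ stars must map to stars, and the order count $|\Aut(T(4))|=48=2\cdot|S_4|$ together with the explicit extra generator $u\mapsto I\setminus u$ in the exceptional case $n=4$. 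Your treatment of the concurrency claim at the end matches what the paper leaves implicit. So your argument buys self-containedness at the cost of length; the only (cosmetic) omissions are that the hypothesis is literally only ``intersecting goes to intersecting,'' which for a permutation of the finite set $\sub_2(I)$ upgrades to a graph automorphism by injectivity on edges, and that the degenerate cases $n\leq 3$ (where every permutation of $\sub_2(I)$ is induced by one of $I$, so case (i) holds trivially) fall outside both of your clique arguments and deserve one sentence.
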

\begin{proof}
  One can identify an edge $\{ a_i,a_j \}$ of $K_A$ with $\{ i,j \}\in\sub_2(I)$; 
  analogously we identify 
  $\sub_2(B)\ni\{ b_i,b_j \} \mapsto \{ i,j \}\in\sub_2(I)$.
  After this identification $\xi\in S_{\sub_2(I)}$, and $\xi$ preserves the edge-intersection
  iff it preserves set-intersection. The claim is just a reformulation of the folklore
  (cf. \cite{klin}, \cite[Prop. 1.5]{perspect}, \cite[Prop. 15]{pascvebl}).
\end{proof}
A more detailed analysis of
the case \ref{lem:meetinvar}\eqref{lem1:cas2} is addressed to another paper.
%
\begin{note}\normalfont
  If $\sigma_0\in S_I$ we frequently identify $\sigma_0$, $\overline{\sigma_0}$,
  and the corresponding map $\xi$.
  Consequently, if $\sigma\in S_I$ we write
  $\perspace(n,\sigma,{\goth N})$ in place of
  $\perspace(n,\overline{\sigma},{\goth N})$.
\end{note}

\begin{prop}\label{prop:iso0}
  Let $f \in S_{{\cal P}}$, $f(p) = p$, $\sigma_1,\sigma_2 \in S_{\sub_2(I)}$,
  and 
  ${\goth N}_1, {\goth N}_2$ be two $\binkonf(n,0)$- configurations defined
  on $\sub_2(I)$.
  The following conditions are equivalent.
  \begin{sentences}\itemsep-2pt
   \item\label{propiso0:war1}
     $f$ is an isomorphism 
     of $\perspace(n,\sigma_1,{\goth N}_1)$ onto $\perspace(n,\sigma_2,{\goth N}_2)$.
   \item\label{propiso0:war2}
     There is $\varphi \in S_I$ such that
     one of the following holds
     \begin{eqnarray} 
     \label{iso0:war1-1}
        \overline{\varphi} \text{ ( comp. \eqref{2perm})} & {\text is } & \text{ an isomorphism of } 
            {\goth N}_1 \text{ onto } {\goth N}_2,
     \\ 
     \label{propiso0:typ1}
       f(x_i) = x_{\varphi(i)},\; x = a,b,\; &&
       f(c_{\{i,j\}}) = c_{\{\varphi(i),\varphi(j)\}},
       \quad i,j\in I, i\neq j,
     \\ \label{iso0:war2}
       \overline{\varphi} \circ \sigma_1 & = & \sigma_2 \circ \overline{\varphi},
     \end{eqnarray}
     or
     \begin{eqnarray} 
     \label{iso0:war1-2}
        \sigma_2^{-1}\overline{\varphi} & \text{  is } & \text{ an isomorphism of } 
            {\goth N}_1 \text{ onto } {\goth N}_2,
     \\ 
     \label{propiso0:typ2}
       f(a_i) = b_{\varphi(i)},\; f(b_i) = a_{\varphi(i)}, &&
        f(c_{\{i,j\}}) = c_{\sigma_2^{-1}\{\varphi(i),\varphi(j)\}},
       \; i,j\in I, i\neq j,
     \\ \label{iso0:war3}
       \overline{\varphi} \circ \sigma_1 & = & \sigma_2^{-1} \circ \overline{\varphi}.
     \end{eqnarray}
  \end{sentences}
\end{prop}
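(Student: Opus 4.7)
The implication \eqref{propiso0:war2}$\Rightarrow$\eqref{propiso0:war1} should be a routine direct check, family by family. The rays in ${\cal L}_p$ are preserved because $f(p)=p$ and, in both subcases of \eqref{propiso0:war2}, a ray $\{p,a_i,b_i\}$ is mapped onto $\{p,a_{\varphi(i)},b_{\varphi(i)}\}$. The family ${\cal L}_A$ is preserved by the very definition of $f$ on $A\cup C$. After substituting the formulas for $f$ on $B$ and $C$, the preservation of ${\cal L}_B$ reduces exactly to \eqref{iso0:war2} in the first subcase and to \eqref{iso0:war3} in the second; the preservation of ${\cal L}_C$ reduces to \eqref{iso0:war1-1} and \eqref{iso0:war1-2}, respectively.

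For the converse \eqref{propiso0:war1}$\Rightarrow$\eqref{propiso0:war2} my plan is to start from the Fact stated just before Lemma \ref{lem:meetinvar}: the sets $A^* := A\cup\{p\}$ and $B^* := B\cup\{p\}$ are two freely contained $K_{n+1}$-graphs of $\perspace(n,\sigma_1,{\goth N}_1)$, and in the generic range of $n$ they are the only maximal such graphs. Since $f(p)=p$, $f$ must permute the unordered pair $\{A^*,B^*\}$, producing case (1) $f(A)=A$, $f(B)=B$, or case (2) $f(A)=B$, $f(B)=A$. Each ray $\{p,a_i,b_i\}\in{\cal L}_p$ is sent to a ray, and comparing endpoints shows that in both cases a single $\varphi\in S_I$ governs $f$ on $A\cup B$ as prescribed by the first halves of \eqref{propiso0:typ1} and \eqref{propiso0:typ2}.

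It remains to read off $f$ on $C$ and to translate the remaining structural requirements. In case (1), the image of $\{a_i,a_j,c_{\{i,j\}}\}\in{\cal L}_A$ must be the unique line of $\perspace(n,\sigma_2,{\goth N}_2)$ through the known points $a_{\varphi(i)},a_{\varphi(j)}$, namely $\{a_{\varphi(i)},a_{\varphi(j)},c_{\{\varphi(i),\varphi(j)\}}\}$, giving the last part of \eqref{propiso0:typ1}. In case (2) the same argument applied to the unique line through $b_{\varphi(i)},b_{\varphi(j)}$ gives $f(c_{\{i,j\}})=c_{\sigma_2^{-1}\{\varphi(i),\varphi(j)\}}$, yielding \eqref{propiso0:typ2}. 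Once $f\restriction C$ is known, imposing that $f({\cal L}_B)$ lie in ${\cal L}_B$ of the target and matching the $C$-coordinate produces the commutation \eqref{iso0:war2} or \eqref{iso0:war3}; and since under the identification $C\simeq\sub_2(I)$ the restriction $f\restriction C$ is $\overline{\varphi}$ in case (1) and $\sigma_2^{-1}\overline{\varphi}$ in case (2), the condition $f({\cal L}_C)={\cal L}_C$ of the target yields \eqref{iso0:war1-1} and \eqref{iso0:war1-2}. The hard part here is the opening: ruling out the possibility that the axial configuration $\goth N$ combines with parts of $A^*,B^*$ to create further freely contained $K_{n+1}$-graphs, so that $\{A^*,B^*\}$ is genuinely forced to be preserved by $f$; this is precisely the point at which the theory of binomial configurations from \cite{klik:binom} enters decisively.
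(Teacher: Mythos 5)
Your proposal is correct and follows essentially the same route as the paper's proof: the paper invokes \cite[Prop.'s 2.6, 2.7]{klik:binom} for the fact that exactly two freely contained $K_{n+1}$-graphs of $\perspace(n,\sigma_l,{\goth N}_l)$ pass through $p$ (so $f(p)=p$ forces $f$ to permute $\{A^\ast,B^\ast\}$), then computes $f$ on $B$ and $C$ via $f(b_i)=f(p)\oplus f(a_i)$ and $f(c_{\{i,j\}})=f(a_i)\oplus f(a_j)$, and extracts \eqref{iso0:war1-1}--\eqref{iso0:war3} exactly as you describe, with the converse left as a direct verification. The only point to adjust is your closing worry: additional freely contained $K_{n+1}$-graphs may well exist (cf.\ Lemma \ref{lem:nextgraf00}), but none of them contains $p$, so the hypothesis $f(p)=p$ already settles that $f$ permutes $\{A^\ast,B^\ast\}$ without any further work.
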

\begin{proof}
  Write ${\goth M}_l = \perspace(n,\sigma_l,{\goth N}_l)$ for $l=1,2$.
  \par
  Assume \eqref{propiso0:war1}. Since 
  exactly two free $K_{n+1}$ subgraphs of ${\goth M}_l$ ($l=1,2$) pass through $p$
  (cf. \cite[Prop.'s 2.6, 2.7]{klik:binom}),
  one of the following holds
  \begin{enumerate}[(a)]\itemsep-2pt
  \item\label{war0:1} $f(A) =A$ and $f(B)= B$, or
  \item\label{war0:2} $f(A) = B$ and $f(B) = A$.
  \end{enumerate}
  Assume, first, \eqref{war0:1}. Consequently, there is a permutation $\varphi\in S_I$
  such that $f(a_i) = a_{\varphi(i)}$ for each $i\in I$.
  This yields $f(b_i) = f(p) \oplus f(a_i) = b_{\varphi(i)}$, and, finally
  $f(c_{i,j}) = f(a_i\oplus a_j) = \ldots = c_{\varphi(i),\varphi(j)}$.
  This justifies \eqref{propiso0:typ1}. 
  Since $f$ preserves the lines of $\goth N$,
  from \eqref{propiso0:typ1} we infer \eqref{iso0:war1-1}.
  Finally, the equation
  \begin{math}
    c_{ \overline{\varphi}(\sigma_1^{-1}(\{ i,j \}))} 
    = f( c_{ \sigma_1^{-1}(\{ i,j \})} ) 
    = f (b_i \oplus b_j)
    = f(b_i) \oplus f(b_j)
    = b_{\varphi(i)} \oplus b_{\varphi(j)}
    = c_{ \sigma_2^{-1}(\{ \varphi(i) , \varphi(j)\} ) }
   \end{math}
   justifies \eqref{iso0:war2}.
   \par
   In case \eqref{war0:2} the reasoning goes analogously. We only need to note that 
   \begin{math}
     f(c_{\{i,j\}}) = f(b_{\varphi(i)} \oplus b_{\varphi(j)}) = c_{\sigma_2^{-1}\overline{\varphi}(\{ i,j \})}, 
   \end{math}
   which justifies the last condition in \eqref{propiso0:typ2} and yields \eqref{iso0:war1-2}. 
   \par\medskip
   Conversely, if \eqref{propiso0:war2} is assumed we directly verify that 
   $f(x\oplus y) = f(x) \oplus f(y)$ holds for all $x,y\in (A \cup B)$, which 
   proves \eqref{propiso0:war1}.
\end{proof}

\begin{lem}\label{lem:nextgraf00}
    Assume that $\perspace(n,\sigma,{\goth N})$ freely contains a complete $K_{n+1}$-graph $G\neq K_{A^\ast},K_{B^\ast}$,
    $\sigma\in S_{\sub_2(I)}$.
    Then there is $i_0 \in I$ such that 
    $\starof(i_0) = \{ c_u\colon i_0\in u\in\sub_2(I) \}$
    is a collinearity clique in $\goth N$ freely contained in it.
    Moreover, 
    \begin{equation}\label{wzor:extragraf00}
    G = G_{(i_0)} \; := \; \{ a_{i_0},b_{i_0} \} \cup \starof(i_0).
  \end{equation}
\end{lem}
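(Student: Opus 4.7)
My plan is to pin down the placement of the vertices of $G$ using the basic incidence observation that in $\perspace(n,\sigma,{\goth N})$ the point $c_u \in C$ is collinear with $a_i$ iff $i \in u$ and with $b_j$ iff $j\in\sigma^{-1}(u)$, whereas distinct $a_i$ and $b_j$ are collinear exactly when $i=j$.

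First I would exclude $p\in G$: then the other $n$ vertices of $G$ lie on lines through $p$, hence in $A\cup B$, and the observation above forces them to sit either all in $A$ or all in $B$, giving $G=K_{A^\ast}$ or $K_{B^\ast}$, contrary to hypothesis. Next I would show $|G\cap A|\le 1$. If $a_i,a_j\in G$ with $i\ne j$, any $b_k\in G$ would need to be collinear with both $a_i$ and $a_j$, impossible, so $G\cap B=\emptyset$; any $c_u\in G$ must then satisfy $\{i,j\}\subseteq u$, forcing $u=\{i,j\}$, so $|G\cap C|\le 1$; but then the line $\LineOn(a_i,a_j)=\{a_i,a_j,c_{\{i,j\}}\}$ carries three vertices of $G$, breaking the injectivity required for free containment. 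A symmetric argument gives $|G\cap B|\le 1$.

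Then I would verify both intersections are nonempty. If $G\cap A=G\cap B=\emptyset$, then $G\subseteq C$ would be a $K_{n+1}$ freely contained in the $\binkonf(n,0)$-configuration $\goth N$, violating the maximality of $K_{n-1}$ recalled in the introduction. If $G\cap A=\{a_i\}$ and $G\cap B=\emptyset$, the remaining $n$ vertices of $G$ would be $c$-points collinear with $a_i$, hence in $\starof(i)$, but $|\starof(i)|=n-1$; the symmetric argument rules out $G\cap B=\{b_j\}$, $G\cap A=\emptyset$. Thus $G\cap A=\{a_i\}$ and $G\cap B=\{b_j\}$, and the collinearity of $a_i$ with $b_j$ in $G$ forces $i=j=:i_0$.

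The remaining $n-1$ vertices of $G$ lie in $C$ and are collinear with $a_{i_0}$, hence belong to $\starof(i_0)$, a set of precisely $n-1$ elements, whence $G=G_{(i_0)}$. For the assertion on $\goth N$, note that two distinct points of $\starof(i_0)$ are collinear in $\perspace(n,\sigma,{\goth N})$, yet no line of ${\cal L}_p\cup{\cal L}_A\cup{\cal L}_B$ contains two $c$-points, so each such pair is joined by a line of ${\cal L}_C$; this makes $\starof(i_0)$ a clique in $\goth N$, and its free containment in $\goth N$ is inherited from the free containment of $G$ in $\perspace(n,\sigma,{\goth N})$. The main obstacle will be the case analysis on $|G\cap A|$ and $|G\cap B|$; once the basic incidence observation and the cardinality $|\starof(i_0)|=n-1$ are in hand, the casework is routine.
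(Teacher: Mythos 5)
Your proof is correct, and its second half (once you know $G\cap A=\{a_{i_0}\}$ and $G\cap B=\{b_{i_0}\}$, count the remaining $n-1$ vertices against $|\starof(i_0)|=n-1$ and read off the clique property of $\starof(i_0)$ in $\goth N$) coincides with the paper's. The difference lies in how the structural fact $a_{i_0},b_{i_0}\in G$ is obtained: the paper outsources it to a cited result (Prop.~2.7 of the reference on binomial {\PSTS}s containing complete graphs), which says directly that $p$, $G\cap A$ and $G\cap B$ form a collinear triple, whereas you reprove it from first principles by a case analysis on $|G\cap A|$, $|G\cap B|$ and whether $p\in G$, using only the incidence table of $\perspace(n,\sigma,{\goth N})$ and the injectivity clause in the definition of free containment (no three vertices of $G$ collinear). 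Your route is longer but self-contained, and it also makes explicit why $G\not\subseteq C$ (a free $K_{n+1}$ cannot sit inside the $\binkonf(n,0)$-configuration $\goth N$), which the citation hides. One harmless slip: the collinearity criterion for $B$ and $C$ should read $b_j\collin c_u$ iff $j\in\sigma(u)$, not $j\in\sigma^{-1}(u)$, since the line of ${\cal L}_B$ through $c_u$ is $\{b_i,b_j,c_u\}$ with $\{i,j\}=\sigma(u)$; none of your steps actually depend on the direction of $\sigma$, only on the fact that exactly $n-1$ points of $C$ are collinear with a given $b_j$, so the argument stands.
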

\begin{proof}
  Let $G\neq K_{A^\ast},K_{B^\ast}$ be a complete $K_{n+1}$-graph freely contained in 
  $\perspace(n,\sigma,{\goth N}) =: {\goth M}$.
  Then $p$, $G\cap A$, and $G\cap B$ form a triple of collinear points 
  (cf. \cite[Prop. 2.7]{klik:binom}).
  So, there is $i_0\in I$ such that $a_{i_0},b_{i_0} \in G$. And 
  $G\setminus \{a_{i_0},b_{i_0}\} \subset C$. The set of points in $C$ which are collinear
  with $a_{i_0}$ is exactly $\starof(i_0)$; it contains $G$ and its cardinality
  is $n-1$, and therefore $G = G_{i_0}$.  
  Since $G$ is a clique, we conclude with:
  $\starof(i_0)$ is a clique in $\goth N$.
  Clearly, it is freely contained in $\goth N$.
\end{proof}

\begin{exm}\label{exm:7}
  Let us define $\zeta\colon\sub_2(I_4)\longrightarrow\sub_2(I_4)$ by the following formula:
  \begin{equation}
    \zeta(\{ u \}) = \left\{  
    \begin{array}{ll} \{ u \} & \text{when } u \neq \{1,2\}, \{3,4\},
    \\ 
    I_4\setminus u & \text{when } u \in \{ \{1,2\}, \{3,4\} \}.
    \end{array}\right.
  \end{equation}
  Note that $\zeta^{-1} = \zeta$.
  \par
  Clearly, $\zeta$ does not preserve edge-intersection.
  It is easy to verify that ${\goth M} = \perspace(4,\zeta,{\GrasSpace(I_4,2)})$ has no free $K_5$-subgraph
  distinct from $A^\ast$ and $B^\ast$. 
  Any isomorphism of $\goth M$ onto ${\goth M}_0 = \perspace(4,\zeta_0,{\goth N})$
  ($\zeta_0 = \overline{\sigma_0}$ or $\zeta_0 = \varkappa\overline{\sigma_0}$, $\sigma_0\in S_{I_4}$,
  $\varkappa(u) = I_4 \setminus u$, notation of \ref{lem:meetinvar})
  maps $p$ onto $p$, so it determines (use \ref{prop:iso0}) a permutation $\varphi\in S_{I_4}$
  such that $\zeta = \zeta_0^{\overline{\varphi}}$.
   Since no such $\zeta_0,\varphi$ exist, 
   {\em there is no skew perspective that preserves edge intersection and is isomorphic to $\goth M$.} 
\end{exm}


\section{Perspectivities associated with permutations of indices: %
general properties}\label{sec:perm-pers}

\begin{note}\normalfont
  Let ${\goth M} = \perspace(n,\sigma,{\goth N})$ be a skew perspective with $\sigma\in S_{I_4}$.
  If $n = 1$ then $\goth M$ is a single line.
  If $n = 2$ then $\goth N$ is a single point and $\sigma = \id_{\sub_2(I_2)}$,
  and then $\goth M$ is the Veblen configuration $\GrasSpace(I_4,2)$
  (the configuration in question is also frequently called the Pasch
  configuration, cf. e.g. \cite{pasz}).
  If $n = 3$ then ${\goth N}$ is a single $3$-line.
  The configurations $\perspace(3,\sigma)$ were determined and characterized in
  \cite{klik:VC};
  these are exactly
  \begin{itemize}\itemsep-2pt
  \item
    the {\em Desargues configuration} $\perspace(3,\id_{I_3})$,
  \item
    the {\em fez configuration} $\perspace(3,{(1,2,3)})$, and
  \item
    the {\em Kantor configuration} $\perspace(3,{(1)(2,3)})$;
  \end{itemize}
  cf. \cite[Repr. 2.6]{klik:VC}
\end{note}

In this section we consider structures $\perspace(n,\sigma)$ where
$\sigma\in S_n$ and $n > 3$.
Two very useful formulas will be frequently used without explicit quotation:
\begin{ctext}
  $a_i \oplus a_j = c_{i,j}$, and
  $b_i \oplus b_j = c_{\sigma^{-1}(i),\sigma^{-1}(j)}$
  for each $\{ i,j \}\in\sub_2(I)$
\end{ctext}
so, $\LineOn(a_i,a_j)$ crosses $\LineOn(b_{\sigma(i)},b_{\sigma(j)})$
in $c_{i,j}$.

\begin{lem}\label{lem:nextgraf}
  The following conditions are equivalent.
  \begin{sentences}\itemsep-2pt
  \item\label{lem2:cas1}
    $\perspace(n,\sigma,{\goth N})$ freely contains a complete $K_{n+1}$-graph $G\neq K_{A^\ast},K_{B^\ast}$.
  \item\label{lem2:cas2}
    There is $i_0 \in \Fix(\sigma)$ such that 
    $\starof(i_0) = \{ c_u\colon i_0\in u\in\sub_2(I) \}$
    is a collinearity clique in $\goth N$ freely contained in it.
  \end{sentences}
  In case \eqref{lem2:cas2},
  \begin{equation}\label{wzor:extragraf}
    G_{(i_0)} \; := \; \{ a_{i_0},b_{i_0} \} \cup \starof(i_0)
  \end{equation}
  is a complete graph freely contained in $\perspace(n,\sigma,{\goth N})$.
\end{lem}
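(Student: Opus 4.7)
The strategy is to reduce this claim to Lemma \ref{lem:nextgraf00}: that lemma already produces an index $i_0\in I$ with $\starof(i_0)$ a free clique in $\goth N$ and $G=G_{(i_0)}$, so the only new content of \ref{lem:nextgraf} relative to \ref{lem:nextgraf00} is the fixed-point condition $i_0\in\Fix(\sigma)$ in direction \eqref{lem2:cas1}$\Rightarrow$\eqref{lem2:cas2}, and the verification that, conversely, $G_{(i_0)}$ really is \emph{freely} contained in $\goth M$ under \eqref{lem2:cas2}.

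For \eqref{lem2:cas1}$\Rightarrow$\eqref{lem2:cas2}, I would first invoke Lemma \ref{lem:nextgraf00} to obtain $i_0$ and write $G=G_{(i_0)}=\{a_{i_0},b_{i_0}\}\cup\starof(i_0)$. The key observation is that the only lines of $\goth M$ through $b_{i_0}$ meeting $C$ are the lines in ${\cal L}_B$, so a point $c_v$ is collinear with $b_{i_0}$ precisely when $v=\sigma^{-1}(\{i_0,k\})$ for some $k\neq i_0$, i.e.\ iff $i_0\in\sigma(v)$. Since $G$ is complete, every $c_u\in\starof(i_0)$ is collinear with $b_{i_0}$; thus $i_0\in u$ forces $i_0\in\sigma(u)=\{\sigma(i_0),\sigma(j)\}$ (where $u=\{i_0,j\}$). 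So either $\sigma(i_0)=i_0$ or $\sigma(j)=i_0$ for every $j\neq i_0$; the second alternative is ruled out by injectivity of $\sigma$ as soon as $n\geq 3$. Hence $i_0\in\Fix(\sigma)$. This is the step I consider the main obstacle, though it turns out to be a one-line pigeonhole once the right line-counting is in place.

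For \eqref{lem2:cas2}$\Rightarrow$\eqref{lem2:cas1} I would set $G:=G_{(i_0)}$ and check, first, that $|G|=n+1$ and that the $\binom{n+1}{2}$ pairs of its vertices are pairwise collinear: $a_{i_0}\oplus b_{i_0}=p$; for $u=\{i_0,j\}$ we have $a_{i_0}\oplus a_j=c_u$ and, using $\sigma(i_0)=i_0$, $b_{i_0}\oplus b_{\sigma(j)}=c_{\sigma^{-1}(\{i_0,\sigma(j)\})}=c_u$; and collinearity of any two $c_u,c_v\in\starof(i_0)$ is the clique hypothesis on $\starof(i_0)$ in $\goth N$. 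Next, freeness: each edge determines a unique containing line (the four line-families ${\cal L}_p,{\cal L}_A,{\cal L}_B,{\cal L}_C$ are disjoint by point-type, and within each family the lines distinguished above have distinct third points), and for disjoint edges $e_1,e_2$ of $G_{(i_0)}$ the lines $\overline{e_1},\overline{e_2}$ share no point — the routine case-split is by the types of the endpoints of $e_1,e_2$, and the only non-trivial sub-case involves two edges of the sub-clique $\starof(i_0)$, which is handled by the freeness of $\starof(i_0)$ in $\goth N$.

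Finally, since $p\notin G_{(i_0)}$ while $p\in A^\ast\cap B^\ast$, we have $G_{(i_0)}\neq K_{A^\ast},K_{B^\ast}$, completing the proof of \eqref{lem2:cas1} and simultaneously the ``moreover'' clause displayed in \eqref{wzor:extragraf}.
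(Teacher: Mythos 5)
Your proof is correct and follows essentially the same route as the paper: reduce to Lemma \ref{lem:nextgraf00} to get $G=G_{(i_0)}$, extract $\sigma(i_0)=i_0$ from the collinearity of $b_{i_0}$ with every point of $\starof(i_0)$, and verify the converse directly. You merely spell out two steps the paper compresses (the injectivity/pigeonhole argument excluding $\sigma(j)=i_0$ for all $j$, and the ``trivial'' verification that $G_{(i_0)}$ is freely contained), which is fine.
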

\begin{proof}
  Assume \eqref{lem2:cas1}.
  From \ref{lem:nextgraf00}, $G$ has form \eqref{wzor:extragraf00}, so
  $b_{i_0}$ must be collinear with each point in $\starof(i_0)$.
  In other words, for each $j\in I\setminus\{i_0\}$ there is $j'$ such that
  $a_{i_0}\oplus a_j  = c_{i_0,j} = b_{\sigma(i_0)}\oplus b_{\sigma(j)}
  = b_{i_0} \oplus b_{j'}$.
  From this we infer 
  $\{ \sigma(i_0),\sigma(j) \} = \{ i_0,j' \}$, and thus $\sigma(i_0) = i_0$. 
  So, from \eqref{lem2:cas1} we have arrived to \eqref{lem2:cas2}.
  \par
  It is a trivial task to prove that under assumptions \eqref{lem2:cas2} the set defined
  by \eqref{wzor:extragraf} is a required $K_{n+1}$-graph, which proves \eqref{lem2:cas1}.
\end{proof}

Let us note, as a particular case of \ref{prop:iso0}, the following characterization.
\begin{prop}\label{prop:iso1}
  Let $f \in S_{{\cal P}}$, $f(p) = p$, $\sigma_1,\sigma_2 \in S_I$,
  and 
  ${\goth N}_1, {\goth N}_2$ be two $\binkonf(n,0)$- configurations defined
  on $\sub_2(I)$.
  The following conditions are equivalent.
  \begin{sentences}\itemsep-2pt
   \item\label{propiso1:war1}
     $f$ is an isomorphism 
     of $\perspace(n,\sigma_1,{\goth N}_1)$ onto $\perspace(n,\sigma_2,{\goth N}_2)$.
   \item\label{propiso1:war2}
     There is $\varphi \in S_I$ such that
     \begin{equation} \label{iso:war1}
       \overline{\varphi} \text{ ( comp. \eqref{2perm}) is } \text{ an isomorphism of } 
       {\goth N}_1 \text{ onto } {\goth N}_2,
     \end{equation}
     and one of the following holds
     \begin{eqnarray} \label{propiso1:typ1}
       f(x_i) = x_{\varphi(i)},\; x = a,b,\; &&
       f(c_{\{i,j\}}) = c_{\{\varphi(i),\varphi(j)\}},
       \quad i,j\in I, i\neq j,
     \\ \label{iso:war2}
       \varphi \circ \sigma_1 & = & \sigma_2 \circ \varphi,
     \end{eqnarray}
     or
     \begin{eqnarray} \label{propiso1:typ2}
       f(a_i) = b_{\varphi(i)},\; f(b_i) = a_{\varphi(i)}, &&
        f(c_{\{i,j\}}) = c_{\{\varphi(i),\varphi(j)\}},
       \quad i,j\in I, i\neq j,
     \\ \label{iso:war3}
       \varphi \circ \sigma_1 & = & \sigma_2^{-1} \circ \varphi.
     \end{eqnarray}
  \end{sentences}
\end{prop}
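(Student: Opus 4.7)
The plan is to specialize Proposition~\ref{prop:iso0} to the case where the skews $\sigma_1, \sigma_2$ come from $S_I$ rather than $S_{\sub_2(I)}$. By the convention stated immediately before this proposition, $\perspace(n,\sigma_l,{\goth N}_l)$ here is nothing but $\perspace(n,\overline{\sigma_l},{\goth N}_l)$ in the sense of Proposition~\ref{prop:iso0}. The crucial observation is that the map $S_I \to S_{\sub_2(I)}$, $\sigma \mapsto \overline{\sigma}$, is a group homomorphism, and it is injective for $n \geq 3$ (a permutation of $I$ is uniquely recovered from its action on unordered pairs). Consequently, any equation in $S_{\sub_2(I)}$ whose entries are all overlines of elements of $S_I$ lifts unambiguously to an equation in $S_I$.

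For the forward direction \eqref{propiso1:war1} $\Rightarrow$ \eqref{propiso1:war2}, I would invoke Proposition~\ref{prop:iso0} to obtain a $\varphi \in S_I$ satisfying either its first or its second alternative. In the first alternative, \eqref{iso0:war1-1} is precisely \eqref{iso:war1}, \eqref{propiso0:typ1} coincides with \eqref{propiso1:typ1}, and \eqref{iso0:war2} reads $\overline{\varphi\sigma_1} = \overline{\sigma_2\varphi}$, so injectivity of the overline map yields \eqref{iso:war2}. In the second alternative, \eqref{iso0:war3} rewrites as $\overline{\sigma_2^{-1}\varphi} = \overline{\varphi\sigma_1}$, whence the same injectivity descent produces \eqref{iso:war3}, and the formula for $f(c_{\{i,j\}})$ from \eqref{propiso0:typ2} simplifies into the form requested in \eqref{propiso1:typ2} after a standard relabelling of the parameter $\varphi$ that aligns it consistently across \eqref{iso:war1}, \eqref{propiso1:typ2} and \eqref{iso:war3}.

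For the converse \eqref{propiso1:war2} $\Rightarrow$ \eqref{propiso1:war1}, I would verify directly that $f$ maps each of the four line-families ${\cal L}_p, {\cal L}_A, {\cal L}_B, {\cal L}_C$ of $\perspace(n,\sigma_1,{\goth N}_1)$ onto the corresponding family of $\perspace(n,\sigma_2,{\goth N}_2)$. Preservation of ${\cal L}_p$ is immediate from $f(p) = p$ together with the $a_i \leftrightarrow b_i$ rule, and preservation of ${\cal L}_C$ uses only that $f|_C = \overline{\varphi}$ and \eqref{iso:war1}. The nontrivial part, preservation of ${\cal L}_A$ and ${\cal L}_B$, amounts to verifying $f(x \oplus y) = f(x) \oplus f(y)$ for $x,y \in A \cup B$: one expands both sides via the two workhorse identities $a_i \oplus a_j = c_{\{i,j\}}$ and $b_i \oplus b_j = c_{\{\sigma^{-1}(i),\sigma^{-1}(j)\}}$ in each structure, and the required equality collapses in each of the two cases to the commutation \eqref{iso:war2}, respectively \eqref{iso:war3}.

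The main obstacle I expect is the bookkeeping in the second alternative: reconciling the raw output $f(c_{\{i,j\}}) = c_{\sigma_2^{-1}\{\varphi(i),\varphi(j)\}}$ delivered by Proposition~\ref{prop:iso0} with the cleaner form $c_{\{\varphi(i),\varphi(j)\}}$ appearing in \eqref{propiso1:typ2}, which forces a coherent reindexing of $\varphi$ so that the single symbol $\varphi$ simultaneously fits \eqref{iso:war1}, \eqref{propiso1:typ2} and \eqref{iso:war3}.
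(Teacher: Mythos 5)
Your strategy --- specialize Proposition \ref{prop:iso0} and descend along the homomorphism $\sigma\mapsto\overline{\sigma}$, which is injective for $n\geq 3$ --- is exactly how the paper treats this statement (it offers no separate proof and simply records Prop.\ \ref{prop:iso1} as a particular case of Prop.\ \ref{prop:iso0}). The first alternative and the general shape of the converse go through as you describe. The genuine problem is the point you yourself flag and then dismiss as bookkeeping: the ``standard relabelling'' that would make one symbol $\varphi$ fit \eqref{iso:war1}, \eqref{propiso1:typ2} and \eqref{iso:war3} simultaneously does not exist. Proposition \ref{prop:iso0} delivers $f(a_i)=b_{\varphi(i)}$ together with $f(c_{\{i,j\}})=c_{\overline{\sigma_2^{-1}\varphi}(\{i,j\})}$, and $\varphi$ is pinned down by the action on $A$ (there is no freedom left once $f(A)=B$). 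Substituting $\varphi':=\sigma_2^{-1}\varphi$ cleans up the $c$-formula and \eqref{iso:war1}, and one checks that \eqref{iso:war3} is even invariant under $\varphi\mapsto\sigma_2^{-1}\varphi$; but then $f(a_i)=b_{\sigma_2\varphi'(i)}\neq b_{\varphi'(i)}$ unless $\sigma_2=\id$. So the two halves of \eqref{propiso1:typ2} are governed by two different permutations, $\varphi$ and $\sigma_2^{-1}\varphi=\varphi\sigma_1$, and no reindexing reconciles them.

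This is not a removable blemish of your write-up: the printed second alternative is genuinely not equivalent to the second alternative of Prop.\ \ref{prop:iso0}. Take $n=3$, $\sigma_1=\sigma_2=(1,2,3)$, ${\goth N}_1={\goth N}_2$ the single line on $C$, i.e.\ the fez configuration $\perspace(3,{(1,2,3)})$. The map $f$ with $f(p)=p$, $f(a_i)=b_{\varphi(i)}$, $f(b_i)=a_{\varphi(i)}$ for $\varphi=(2,3)$ and $f(c_u)=c_{\overline{(1,3)}(u)}$ is an automorphism (one checks all ten lines directly), yet $f(c_{\{1,2\}})=c_{\{2,3\}}\neq c_{\overline{(2,3)}(\{1,2\})}=c_{\{1,3\}}$, so \eqref{propiso1:typ2} fails for the forced $\varphi=(2,3)$ and, since $f(A)=B$, the first alternative is unavailable. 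The same mismatch kills your converse in the second case: expanding $f(a_i\oplus a_j)$ against $f(a_i)\oplus f(a_j)$ with the printed formulas forces $\overline{\sigma_2^{-1}\varphi}=\overline{\varphi}$, not \eqref{iso:war3}. The repair is simply to keep the second alternative in the form \eqref{iso0:war1-2}--\eqref{iso0:war3} of Prop.\ \ref{prop:iso0}, i.e.\ $f(c_{\{i,j\}})=c_{\overline{\sigma_2^{-1}\varphi}(\{i,j\})}$ with $\overline{\sigma_2^{-1}\varphi}=\overline{\varphi\sigma_1}$ the isomorphism of ${\goth N}_1$ onto ${\goth N}_2$; with that corrected target your derivation (and the direct verification of the converse) goes through verbatim.
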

%
%
As we know (cf. \cite[Prop. 2.6]{klik:binom}), in case \ref{lem:nextgraf}
there is a permutation of the edges of $K_{A^\ast \setminus \{ a_{i_0} \}}$
such that ${\goth M} \cong \perspace(n,\sigma',{\goth N}')$ for an adequate
configuration ${\goth N}'$:
${\goth M}$ {\em is} a skew perspective of $K_{A^\ast\setminus \{ a_{i_0} \}}$
onto $G_{(i_0)}$.
In the case we frequently say 
``${\goth M}\cong \perspace(n,\sigma',{\goth N}')$ 
and {\em $a_{i_0}$ is the perspective center
in $\perspace(n,\sigma',{\goth N}')$}''.
However, 
$\sigma'$ need not to be determined by a permutation of the vertices (cf. \ref{lem:meetinvar}) 
neither $\sigma$ and $\sigma'$ are necessarily conjugate (cf. \ref{prop:iso1}).

\begin{prop}\label{prop:movecenter}
  Let $\starof(i_0)$ be a clique in $\goth N$ for some $i_0\in\Fix(\sigma)$,
  $\sigma\in S_I$, $|I| = n+1\geq 4$ (cf. \ref{lem:nextgraf}).
  The following conditions are equivalent.
  \begin{sentences}\itemsep-2pt
  \item\label{movecenter1}
    $\perspace(n+1,\sigma,{\goth N}) \cong \perspace(n+1,\sigma',{\goth N}')$,
    for a $\sigma' = \overline{\sigma'_0}$, $\sigma'_0 \in S_{n+1}$\space
    and a suitable configuration ${\goth N}'$,
    where $a_{i_0}$ is the perspective center in $\perspace(n+1,\sigma',{\goth N}')$
    of the graphs $G_{(i_0)}$ and $K_{A^\ast \setminus \{ a_{i_0} \}}$.
  \item\label{movecenter2}
    There is $\tau \in S_{I\setminus \{i_0\}}$ such that
    \begin{equation}\label{war:extraskos}
      c_{\{i_0,\tau(i)\}} \oplus c_{\{ i_0,\tau(j) \}} = c_{\{ i,j \}}
    \end{equation}
    for all $i,j\in I$, $i,j\neq i_0$.
  \end{sentences}
\end{prop}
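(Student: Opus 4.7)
The plan is to unfold $\perspace(n+1,\sigma,{\goth N})$ around $a_{i_0}$ via an explicit relabelling. Introduce a new index set $J = \{\ast\}\cup(I\setminus\{i_0\})$ of size $n+1$, where $\ast$ is associated with the unique ray $\{p,a_{i_0},b_{i_0}\}$ through $a_{i_0}$ and each $k\in I\setminus\{i_0\}$ with the ray $\{a_{i_0},a_k,c_{\{i_0,k\}}\}$. Put $a'_\ast = p$, $a'_k = a_k$, $b'_\ast = b_{i_0}$, $b'_k = c_{\{i_0,k\}}$, and the new centre $p'=a_{i_0}$. The axial labels are then forced: $c'_{\{k,l\}} = c_{\{k,l\}}$ for $k,l\in I\setminus\{i_0\}$ and $c'_{\{\ast,k\}} = b_k$. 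The free containment of $\starof(i_0)$ in ${\goth N}$ (given by Lemma \ref{lem:nextgraf}) ensures that $c_{\{i_0,i\}}\oplus c_{\{i_0,j\}}$ lies outside $\starof(i_0)$, which is what makes the candidate axial set $C' = (C\setminus\starof(i_0))\cup(B\setminus\{b_{i_0}\})$ well-defined as the point set of a putative ${\goth N}'$.

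For \eqref{movecenter1}$\Rightarrow$\eqref{movecenter2}, take $\sigma'_0\in S_J$ with $\sigma'=\overline{\sigma'_0}$ and compute $\oplus$ in the original configuration. Since $i_0\in\Fix(\sigma)$, we have $b'_\ast\oplus b'_k = b_{i_0}\oplus c_{\{i_0,k\}} = b_{\sigma(k)} = c'_{\{\ast,\sigma(k)\}}$, which forces $\sigma'^{-1}_0(\ast)=\ast$ and $\sigma'^{-1}_0|_{I\setminus\{i_0\}} = \sigma|_{I\setminus\{i_0\}}$. Comparing $b'_i\oplus b'_j$ for $i,j\in I\setminus\{i_0\}$ in both labellings then yields the identity $c_{\{i_0,i\}}\oplus c_{\{i_0,j\}} = c_{\{\sigma(i),\sigma(j)\}}$, which on substituting $i,j\mapsto\sigma^{-1}(i),\sigma^{-1}(j)$ becomes exactly \eqref{war:extraskos} with $\tau = \sigma^{-1}|_{I\setminus\{i_0\}}$ (a permutation of $I\setminus\{i_0\}$ because $\sigma$ fixes $i_0$).

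For \eqref{movecenter2}$\Rightarrow$\eqref{movecenter1}, given $\tau$ set $\sigma'_0(\ast)=\ast$ and $\sigma'_0|_{I\setminus\{i_0\}} = \tau^{-1}$, and take ${\goth N}'$ to consist of those lines of the original configuration lying entirely in $C'$, namely the lines of ${\goth N}$ avoiding $\starof(i_0)$ together with the lines $\{b_i,b_j,c_{\sigma^{-1}\{i,j\}}\}$ from ${\cal L}_B$ with $i,j\neq i_0$. One checks that ${\goth N}'$ is a $\binconf(n+1,0)$-configuration on $C'$ and that the relabelled line families in the new labelling match ${\cal L}_p\cup{\cal L}_A\cup{\cal L}_B\cup{\cal L}_C$ of Construction \ref{def:pers} for $\perspace(n+1,\overline{\sigma'_0},{\goth N}')$; the only nontrivial skew identity reduces on pairs $\{i,j\}\subseteq I\setminus\{i_0\}$ to \eqref{war:extraskos} itself and on $\ast$-edges to the tautology $b_{i_0}\oplus c_{\{i_0,k\}} = b_{\sigma(k)}$.

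The main obstacle is the compatibility of the two prescriptions of $\sigma'^{-1}_0|_{I\setminus\{i_0\}}$ that arise in the backward direction: the $\ast$-edges demand it equal $\sigma|_{I\setminus\{i_0\}}$, while the pair-edges demand it equal $\tau^{-1}$. Reconciling them uses the uniqueness of $\tau$ guaranteed by the faithfulness of the action of $S_{I\setminus\{i_0\}}$ on $\sub_2(I\setminus\{i_0\})$ (valid since $|I\setminus\{i_0\}| = n \geq 3$), combined with the forward computation, which identifies the only viable $\tau$ as $\sigma^{-1}|_{I\setminus\{i_0\}}$.
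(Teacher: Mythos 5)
Your forward direction is sound and is essentially the paper's own argument in different notation: you relabel around $a_{i_0}$ exactly as the paper does (your $\ast$ is the paper's extra index $n+1$), and where the paper rules out $\tau(n+1)\neq n+1$ by observing that no line of the configuration has exactly one point in $B$ and two in $C$, you reach the same conclusion from the explicit identity $b_{i_0}\oplus c_{\{i_0,k\}}=b_{\sigma(k)}$ (valid since $\sigma(i_0)=i_0$). Both are fine, and your version has the extra benefit of identifying the new skew on the $\ast$-edges as $\sigma^{-1}$.

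The backward direction, however, contains a genuine gap, located precisely where you flag ``the main obstacle''. Condition \eqref{movecenter2} is a statement about $\goth N$ and $i_0$ alone --- $\sigma$ does not occur in \eqref{war:extraskos} at all --- so the unique $\tau$ it supplies (uniqueness does follow from the free containment of $\starof(i_0)$, as you indicate) carries no information about $\sigma$. Your reconciliation appeals to ``the forward computation, which identifies the only viable $\tau$ as $\sigma^{-1}|_{I\setminus\{i_0\}}$'', but that computation is carried out under hypothesis \eqref{movecenter1}, which is exactly what the backward implication must establish; the argument is circular. Nor is the gap merely presentational: take ${\goth N}=\GrasSpace(I,2)$ and any $\sigma$ with $\sigma(i_0)=i_0$ but $\sigma\neq\id$ on $I\setminus\{i_0\}$. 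Then $c_{\{i_0,i\}}\oplus c_{\{i_0,j\}}=c_{\{i,j\}}$, so \eqref{movecenter2} holds with $\tau=\id\neq\sigma^{-1}$, yet the skew of the relabelled structure acts as $\overline{\id}$ on pairs from $I\setminus\{i_0\}$ and as induced by $\sigma^{-1}$ on the $\ast$-pairs, hence is not of the form $\overline{\sigma'_0}$ --- consistent with the Proposition immediately following this one in the paper, which asserts that for $\sigma\neq\id$ no centre other than $p$ exists. So either \eqref{movecenter2} must be read as requiring $\tau=\sigma^{-1}|_{I\setminus\{i_0\}}$, i.e.\ $c_{\{i_0,\sigma^{-1}(i)\}}\oplus c_{\{i_0,\sigma^{-1}(j)\}}=c_{\{i,j\}}$ (after which your construction of $\sigma'_0$ and ${\goth N}'$ does go through), or the implication cannot be closed. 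In fairness, the paper's own converse is a one-sentence ``simple computation'' that does not address this point either; you deserve credit for surfacing it, but the resolution you offer does not work. There is also a small internal slip: you first set $\sigma'_0|_{I\setminus\{i_0\}}=\tau^{-1}$ and later say the pair-edges demand ${\sigma'_0}^{-1}=\tau^{-1}$; the correct reading of the pair-edge constraint is $\sigma'_0|_{I\setminus\{i_0\}}=\tau$.
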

\begin{proof}
  Assume \eqref{movecenter1}.
  Without loss of generality we can assume that $I = \{0,1,\ldots,n\}$ and $i_0 = 0$.
  So, we relabel the points of $\perspace(n+1,\sigma,{\goth N}) =: {\goth M}$ 
  so as $q = a_0$ becomes a 
  perspective center and 
  $a_i: i=1,\ldots,n+1$ and $d_i: i=1,\ldots,n+1$ will be the complete subgraphs that
  are in the respective perspective. Finally, we take $e_{i,j} = a_i\oplus a_j$
  for $\{ i,j \}\in\sub_2(T)$, $T = \{ 1,\ldots,n+1 \}$.
  So, we obtain
  \begin{multline}
    a_{n+1} = p,\;
    d_{i} = q \oplus a_i = c_{0,i} \text{ for } i\in T, i\neq 0,\;
    d_{n+1} = q\oplus a_{n+1} = b_0,
    \\
    e_{i,j} = c_{0,i} \oplus c_{0,j} \; (\text{computed in } {\goth N})
    \text{ for } i,j\in T, i,j\neq 0,\;
    \\
    e_{i,n+1} = b_i \text{ for } i\in T, i\neq 0.
  \end{multline}
  Let $\tau\in S_T$ be the corresponding skew i.e. assume that
  \begin{equation}
    a_i \oplus a_j = e_{i,j} = d_{\tau(i)} \oplus d_{\tau(j)}
  \end{equation}
  for all $\{ i,j \}\in \sub_2(T)$.
  In particular, this yields for $i\in T$, $i\neq n+1$ the following:
  $    a_i \oplus a_{n+1} = $
  \begin{equation}
    b_i = d_{\tau(i)} \oplus d_{\tau(n+1)}
    = \left\{\begin{array}{ll}
        c_{0,\tau(i)}\oplus c_{0,\tau(n+1)} & or
	\\
	c_{0,\tau(i)}\oplus b_0 & \tau(n+1) = n+1, \tau(i)\neq n
	\\
	b_0 \oplus c_{0,\tau(n+1)} & \tau(i) = n+1,\tau(n+1) = 0
      \end{array}
      \right. .
  \end{equation}
  Since $\goth M$ does not contain any line with exactly one point in $B$ and two points in $C$,
  the first possibility is inconsistent.
  So, we end up with $\tau(n+1) = n+1$ and therefore, $\tau\in S_n$.
  If so, we obtain
  \begin{math}
    c_{i,j} = a_i \oplus a_j = e_{i,j} = d_{\tau(i)} \oplus d_{\tau(j)} 
    = c_{0,\tau(i)} \oplus c_{0,\tau(j)}
  \end{math}
  for distinct $1\leq i,j\leq n$.
  This justifies \eqref{war:extraskos}.
  \par\medskip
  The converse reasoning consists in a simple computation: the reasoning above 
  defines, in fact, a required isomorphism.
  It also defines the configuration ${\goth N}'$:
  the formulas 
  \begin{math}
    e_{i,n+1} \oplus e_{j,n+1} = b_i \oplus b_j = c_{\sigma^{-1}(i),\sigma^{-1}(j)} =
    e_{\sigma^{-1}(i),\sigma^{-1}(j)}
  \end{math}
  for $1\leq i,j\leq n$ and $e_{u}\oplus e_v = e_y$ iff $c_u \oplus c_v = c_y$
  for $u,v,y\in\sub_2(T\setminus\{n+1\})$
  determine the lines of ${\goth N}'$.
\end{proof}


\section{Particular case: ${\goth N}$ is a generalized Desargues configuration}
\label{subsec:grasaxis}

In the class of skew perspectives one type of them seems ``most similar to the
classical geometrical perspective'': when the perspective axis is a generalized
Desargues configuration i.e. when ${\goth N} = \GrasSpace(n,2)$
(cf. \cite{doliwa1}, \cite{doliwa2}).
So, in this subsection we set 
\begin{ctext}
  ${\goth M} = \perspace(n,\sigma,{\GrasSpace(n,2)})$, $\sigma\in S_I$, $n \geq 4$.
\end{ctext}

\begin{prop}
  Either ${\goth M} = \GrasSpace(n+2,2) = \perspace(n,\id)$ and then each point
  of $\goth M$ can be chosen as a center of a skew perspective, or
  $\goth M$ does not contain any point $q\neq p$ such that 
  ${\goth M} \cong \perspace(n,\sigma',{\goth B}) =: {\goth M}'$ 
  for a suitable configuration ${\goth B}$,
  such that $q$ is the perspective center in ${\goth M}'$.
\end{prop}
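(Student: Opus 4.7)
The plan splits on whether $\sigma = \id_I$.

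If $\sigma = \id_I$, then by \eqref{gras:pers} we may identify $\goth M$ with $\GrasSpace(n+2,2)$, whose automorphism group contains the natural image of $S_{n+2}$ acting on $\sub_2(\{1,\ldots,n+2\})$. This action is transitive on the point set, so for each $q$ some permutation carries $q$ to $p$; the induced automorphism of $\goth M$ exhibits $\goth M \cong \perspace(n,\id,\GrasSpace(n,2))$ with $q$ placed at the centre.

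Suppose now $\sigma \neq \id_I$ and assume for contradiction that some $q \neq p$ serves as a perspective centre in a representation $\goth M \cong \perspace(n,\sigma',\goth B)$. Under any such isomorphism, two free $K_{n+1}$-subgraphs of $\goth M$ meeting only in $q$ play the roles of $K_{A^*}$ and $K_{B^*}$. Because $\goth N = \GrasSpace(n,2)$ automatically makes each star $\starof(i_0)$ a freely contained clique, Lemma \ref{lem:nextgraf} identifies the alternative free $K_{n+1}$-subgraphs of $\goth M$ exactly as the $G_{(i_0)}$ with $i_0 \in \Fix(\sigma)$. The intersections $K_{A^*}\cap G_{(i_0)} = \{a_{i_0}\}$, $K_{B^*}\cap G_{(i_0)} = \{b_{i_0}\}$ and $G_{(i_0)}\cap G_{(j_0)} = \{c_{\{i_0,j_0\}}\}$ confine $q$ to $\{a_{i_0}, b_{i_0}\colon i_0\in\Fix(\sigma)\} \cup \{c_{\{i_0,j_0\}}\colon i_0\neq j_0\in\Fix(\sigma)\}$. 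The $A\leftrightarrow B$ symmetry of Construction \ref{def:pers} reduces the $b_{i_0}$-case to the $a_{i_0}$-case, and the $c$-case reduces to it by relabelling one of the two $G_{(\cdot)}$'s as the new $K_{A^*}$, so it suffices to treat $q = a_{i_0}$.

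For $q = a_{i_0}$ we invoke Proposition \ref{prop:movecenter}. A careful reading of its proof shows that the $\tau$ produced is not a free parameter: the $\{i,n+1\}$-edges force $\tau(n+1) = n+1$, and the $b$-side identity $c_{\{i_0,\tau(i)\}}\oplus b_{i_0} = b_{\sigma(\tau(i))} = b_i$ forces $\tau(i) = \sigma^{-1}(i)$ for every $i \in I\setminus\{i_0\}$. Substituting this particular $\tau$ into \eqref{war:extraskos} and using that in $\GrasSpace(n,2)$ one has $c_u \oplus c_v = c_{u\triangle v}$ whenever $|u\cap v| = 1$ converts the condition into
\[
\{\sigma^{-1}(i), \sigma^{-1}(j)\} \;=\; \{i,j\}\quad \text{for all } \{i,j\}\in\sub_2(I\setminus\{i_0\}).
\]
Since $|I\setminus\{i_0\}| = n-1 \geq 3$, this forces $\sigma^{-1}$, and hence $\sigma$, to be the identity on $I\setminus\{i_0\}$; together with $\sigma(i_0) = i_0$ this gives $\sigma = \id_I$, contradicting the case assumption. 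The main obstacle is exactly this rigidity step: a literal reading of \ref{prop:movecenter}\eqref{movecenter2} might suggest that \emph{any} $\tau$ satisfying \eqref{war:extraskos} would do, whereas only the $\tau$ pinned down by the $b$-side of an actual centre-moving isomorphism is admissible, and \eqref{war:extraskos} must then be checked for that specific $\tau$. Once this determinacy is made explicit, the symmetric-difference arithmetic of $\GrasSpace(n,2)$ supplies the contradiction, and the $A\leftrightarrow B$ and $G_{(i_0)}\leftrightarrow G_{(j_0)}$ reductions cover the remaining possibilities for $q$.
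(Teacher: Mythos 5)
Your proof is correct and follows essentially the same route as the paper's: Lemma \ref{lem:nextgraf} forces a fixed point $i_0$ of $\sigma$, Proposition \ref{prop:movecenter} supplies the permutation $\tau$, and the identity $c_{\{i_0,k\}}\oplus c_{\{i_0,l\}}=c_{\{k,l\}}$ in $\GrasSpace(n,2)$ yields the rigidity that forces $\sigma=\id_I$. Your explicit derivation of $\tau=\sigma^{-1}\restriction_{I\setminus\{i_0\}}$ from the $b$-side of the centre-moving isomorphism is precisely what the paper compresses into ``$\tau = \id$ and then ${\goth M}'\cong\GrasSpace(n+2,2)$'' (and it is indeed needed to make the contradiction non-vacuous, since \eqref{war:extraskos} by itself is trivially satisfied by $\tau=\id$ over a Desarguesian axis); the only step looser than it should be is the ``relabelling'' dispatch of the case $q=c_{\{i_0,j_0\}}$, which the paper's proof does not address explicitly either.
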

\begin{proof}
  Assume that $\sigma \neq \id_I$.
  Suppose that such a point $q$ exists, then -- comp. 
  \ref{prop:movecenter} and \ref{lem:nextgraf} --
  there is $i_0 \in I$ such that $\sigma(i_0) = i_0$.
  Moreover, in view of \eqref{war:extraskos}, there is a permutation $\tau$ such that
  $c_{i_0,\tau(i)} \oplus c_{i_0,\tau(j)} = c_{i,j}$ 
  for all $i,j \in I$, $i,j\neq i_0$. 
  On the other hand, in $\GrasSpace(I,2)$ we have  
  $c_{i_0,\tau(i)} \oplus c_{i_0,\tau(j)} = c_{\tau(i),\tau(j)}$ 
  for all $i,j$ as above. 
  This, finally, gives
  $\{ i,j \} = \{ \tau(i),\tau(j) \}$, from which we deduce $\tau = \id$ and then
  ${\goth M}' \cong \GrasSpace(n+2,2)$.
\end{proof}
\begin{cor}\label{cor:iso2}
  Let $S_I \ni \sigma_1\neq\id_I$.
  If $f$ is an isomorphism between 
  ${\perspace(n,\sigma_1,{\GrasSpace(n,2)})}$ and ${\perspace(n,\sigma_2,{\GrasSpace(n,2)})}$
  then $f(p) = p$ and $\sigma_2\neq \id_I$. 
  Moreover, 
  $f$ is determined by a permutation $\varphi\in S_I$ 
  (comp. \eqref{propiso1:typ1}, \eqref{propiso1:typ2})
  so as
  either $f$ fixes $A$ and $B$ and then 
  $\sigma_2 = \varphi\circ\sigma_1\circ \varphi^{-1} = \sigma_1^{\varphi}$,
  or $f$ interchanges $A$ and $B$ and $\sigma_2^{-1} = \sigma_1^{\varphi}$
  (see Prop. \ref{prop:iso1}).
\end{cor}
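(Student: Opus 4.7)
The plan is to obtain the corollary as a specialisation of Proposition \ref{prop:iso1} to the case ${\goth N}_1 = {\goth N}_2 = \GrasSpace(n,2)$, with the preceding proposition providing the two pieces of information $f(p) = p$ and $\sigma_2 \neq \id_I$ that are needed in order to invoke \ref{prop:iso1} meaningfully. Everything else will then be bookkeeping inside \ref{prop:iso1}.

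The first task is to exclude $\sigma_2 = \id_I$. If it held, then by \eqref{gras:pers} the target $\perspace(n,\sigma_2,{\GrasSpace(n,2)})$ would be isomorphic to $\GrasSpace(n+2,2) = \perspace(n,\id)$, so by the first alternative of the preceding proposition \emph{every} one of its points admits the status of a perspective centre in some suitable re-labelling. Pulling this property back along $f^{-1}$ would force every point of ${\goth M}_1 := \perspace(n,\sigma_1,{\GrasSpace(n,2)})$ to be a possible centre as well, contradicting the second alternative of the preceding proposition since $\sigma_1 \neq \id_I$. Hence $\sigma_2 \neq \id_I$, and applying the same proposition now to the target, the only point of $\perspace(n,\sigma_2,{\GrasSpace(n,2)})$ that can serve as a skew-perspective centre is $p$. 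Because isomorphisms carry centres to centres, this forces $f(p) = p$.

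With $f(p) = p$ and the axial configurations coinciding, Proposition \ref{prop:iso1} yields a permutation $\varphi \in S_I$. The hypothesis \eqref{iso:war1} is automatic here, since any $\varphi \in S_I$ induces a $\sub_2$-level automorphism $\overline{\varphi}$ of $\GrasSpace(n,2) = \struct{\sub_2(I),\sub_3(I),\subset}$. The two disjunctive alternatives supplied by \ref{prop:iso1} then read precisely as the two cases of the corollary: in the $A$- and $B$-fixing alternative, \eqref{iso:war2} becomes $\sigma_2 = \varphi\circ\sigma_1\circ\varphi^{-1} = \sigma_1^\varphi$; in the $A$-$B$-swapping alternative, \eqref{iso:war3} becomes $\sigma_2^{-1} = \sigma_1^\varphi$.

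The only genuinely non-routine ingredient is the transport argument that simultaneously pins down $f(p)$ and rules out $\sigma_2 = \id_I$; that step rests on cleanly transferring the ``uniqueness of centre'' conclusion of the preceding proposition across $f$. Once it is settled, the rest amounts to reading off \ref{prop:iso1} with the observation that $\overline{\varphi}$ automatically preserves the Grassmannian.
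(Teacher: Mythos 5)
Your proposal is correct and follows essentially the route the paper intends: the uniqueness-of-centre proposition immediately preceding the corollary gives $\sigma_2\neq\id_I$ and $f(p)=p$ (via the isomorphism-invariance of the set of admissible centres), and then Proposition \ref{prop:iso1} is read off with the observation that every $\overline{\varphi}$ is automatically an automorphism of $\GrasSpace(n,2)$, so condition \eqref{iso:war1} is vacuous and \eqref{iso:war2}, \eqref{iso:war3} become $\sigma_2=\sigma_1^{\varphi}$ and $\sigma_2^{-1}=\sigma_1^{\varphi}$ respectively. No gaps.
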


Let us recall a few facts from the folklore of group theory.
Let $\sigma\in S_I$, then $\sigma$ has a unique (up to an order) decomposition
$\sigma = \sigma_1\circ\ldots\circ\sigma_k$ where $\sigma_1,\ldots,\sigma_k$ are pairwise
disjoint cycles. Let $x_i$ be the length of $\sigma_i$, then $n = \sum_{i=1}^k x_i$.
Without loss of generality we can assume that $x_1\leq \ldots \leq x_k$ and we can 
set $C(\sigma) := (x_1,\ldots,x_k)$. So, $C(\sigma)$ is 
an unordered partition of the integer
$n$ into $k$ components (see e.g. \cite[Ch. 4]{hall}, \cite{bona}).
The following is known:
\begin{fact}\label{fct:conjug}
  $\sigma_1$ and $\sigma_2$ are conjugate in $S_I$ 
  (i.e. $\sigma_2 = \varphi\circ\sigma_1\circ\varphi^{-1} = \sigma_1^\varphi$ 
  for a $\varphi\in S_I$), 
  iff $C(\sigma_1) = C(\sigma_2)$.
  \par
  In particular, $\sigma$ and $\sigma^{-1}$ are conjugate for every $\sigma\in S_I$.
  \par
  Permutations $\sigma$ and $\id_I$ are conjugate iff $\sigma = \id_I$.
\end{fact}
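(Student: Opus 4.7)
The plan is to reduce everything to the first equivalence (cycle-type characterization of conjugacy) and then read off the other two items as immediate consequences.

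For the forward direction of the first equivalence, I would rely on the standard computation that for any $\varphi\in S_I$ and any cycle $\tau = (a_1,a_2,\ldots,a_m)$ occurring in the disjoint cycle decomposition of $\sigma_1$, we have
\begin{equation*}
  \varphi\circ\tau\circ\varphi^{-1} \;=\; (\varphi(a_1),\varphi(a_2),\ldots,\varphi(a_m)).
\end{equation*}
Applying this to each cycle in the factorization $\sigma_1=\tau_1\circ\ldots\circ\tau_k$ and using that conjugation is a homomorphism of $S_I$ into itself gives a disjoint cycle decomposition of $\sigma_1^\varphi$ in which each cycle has the same length as the corresponding cycle of $\sigma_1$. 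Hence $C(\sigma_1^\varphi)=C(\sigma_1)$.

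For the converse, assume $C(\sigma_1)=C(\sigma_2)=(x_1,\ldots,x_k)$. I would enumerate the disjoint cycles of $\sigma_1$ as $(a^{(i)}_1,\ldots,a^{(i)}_{x_i})$ for $i=1,\ldots,k$ and the cycles of $\sigma_2$ analogously as $(b^{(i)}_1,\ldots,b^{(i)}_{x_i})$; since the cycles partition $I$, the prescription $\varphi(a^{(i)}_j):=b^{(i)}_j$ defines a bijection $\varphi\in S_I$. A direct check on every $a^{(i)}_j$ shows $\varphi\circ\sigma_1(a^{(i)}_j) = b^{(i)}_{j+1\bmod x_i} = \sigma_2\circ\varphi(a^{(i)}_j)$, so $\sigma_2=\sigma_1^\varphi$. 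Note the freedom in how cycles of equal length are matched and in the choice of starting element of each cycle; this slight arbitrariness is the only bookkeeping point to keep in mind.

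The second assertion is immediate from the first: the inverse of the cycle $(a_1,\ldots,a_m)$ is $(a_1,a_m,a_{m-1},\ldots,a_2)$, again of length $m$, so $C(\sigma^{-1})=C(\sigma)$, and the first part finishes the job. The third assertion is equally quick: $C(\id_I)=(1,\ldots,1)$, and if $C(\sigma)=(1,\ldots,1)$ then every cycle of $\sigma$ has length one, i.e.\ every element of $I$ is fixed and $\sigma=\id_I$. The main obstacle, if any, is purely notational -- writing a clean enumeration of cycles in the converse of (1); there is no real geometric or combinatorial difficulty, which is consistent with the paper's remark that the statement is folklore.
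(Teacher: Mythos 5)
Your proof is correct. The paper itself gives no argument for this statement: it is quoted as group-theoretic folklore with references to Hall's \emph{Combinatorial Theory} and Andrews' book on partitions, so there is nothing to compare against; your write-up is the standard argument (conjugation relabels each cycle via $\varphi\circ(a_1,\ldots,a_m)\circ\varphi^{-1}=(\varphi(a_1),\ldots,\varphi(a_m))$, and conversely matching cycles of equal length --- fixed points counted as $1$-cycles so that the cycles partition $I$ --- defines the conjugating permutation), and the two corollaries follow exactly as you state.
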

As an immediate consequence of \ref{fct:conjug} and \ref{cor:iso2}
we obtain
\begin{prop}\label{prop:class-grasaxis}
  Let $\sigma_1,\sigma_2\in S_I$.
  $\perspace(n,\sigma_1,{\GrasSpace(n,2)}) \cong \perspace(n,\sigma_2,{\GrasSpace(n,2)})$
  iff $\sigma_1$ and $\sigma_2$ are conjugate.
  \par
  Consequently, there are $P(n) = \sum_{k=1}^n P(n,k)$ types of the skew perspectives
  whose axial configurations are the generalized Desargues configuration, where
  $P(n,k)$ is the number of unordered partitions of $n$ into $k$ components.
\end{prop}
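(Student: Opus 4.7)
The plan is to read off the first equivalence directly from Corollary~\ref{cor:iso2} and Fact~\ref{fct:conjug}, after disposing of the degenerate case $\sigma_i = \id_I$, and then translate it into the partition count. First I would handle $\sigma_1 = \id_I$ separately: the preceding proposition gives ${\goth M}_1 = \perspace(n,\id_I,\GrasSpace(n,2)) \cong \GrasSpace(n+2,2)$, in which every point is eligible as a perspective centre. If ${\goth M}_2 \cong {\goth M}_1$ but $\sigma_2 \neq \id_I$, swapping roles in Corollary~\ref{cor:iso2} forces $\sigma_1 \neq \id_I$, a contradiction. So within this degenerate stratum the relation $\cong$ is trivially conjugacy, since $\id_I$ is conjugate only to itself.

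Assume now $\sigma_1 \neq \id_I$. For the forward direction, let $f$ be an isomorphism of the two configurations. Corollary~\ref{cor:iso2} supplies $\varphi \in S_I$ with $\sigma_2 = \sigma_1^{\varphi}$ or $\sigma_2^{-1} = \sigma_1^{\varphi}$; in the latter case Fact~\ref{fct:conjug} gives that $\sigma_2$ and $\sigma_2^{-1}$ are already conjugate, so in both cases $\sigma_1$ and $\sigma_2$ lie in the same conjugacy class. For the converse, given a witness $\varphi$ of $\sigma_2 = \varphi \sigma_1 \varphi^{-1}$, define $f$ by \eqref{propiso1:typ1}, i.e.\ $f(p)=p$, $f(a_i)=a_{\varphi(i)}$, $f(b_i)=b_{\varphi(i)}$, $f(c_{\{i,j\}}) = c_{\{\varphi(i),\varphi(j)\}}$. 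Condition~\eqref{iso:war2} of Proposition~\ref{prop:iso1} is then exactly the conjugacy relation, while condition~\eqref{iso:war1} reduces to checking that $\overline{\varphi}$ is an automorphism of $\GrasSpace(n,2)$; this is immediate because the lines of $\GrasSpace(I,2)$ are the triples $\{c_{\{i,j\}},c_{\{i,k\}},c_{\{j,k\}}\}$ of pairs sharing a common index, a property visibly preserved by the induced map $\overline\varphi$. Proposition~\ref{prop:iso1} then delivers the isomorphism.

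For the count, conjugacy classes in $S_I$ (with $|I| = n$) are classified by cycle type, which by Fact~\ref{fct:conjug} is an unordered partition of $n$; summing over the number $k$ of parts gives $\sum_{k=1}^{n} P(n,k) = P(n)$. The only real obstacle is the $\sigma_2^{-1}$-alternative in Corollary~\ref{cor:iso2}, which a priori threatens to collapse distinct conjugacy classes into one isomorphism type; the fact that every permutation is conjugate to its inverse (Fact~\ref{fct:conjug}) is precisely what prevents this, so the enumeration is exactly by conjugacy classes with no further identifications.
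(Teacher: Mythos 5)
Your proposal is correct and follows essentially the same route as the paper, which derives the proposition directly from Corollary~\ref{cor:iso2} and Fact~\ref{fct:conjug} (the paper states it as an immediate consequence and omits the details you spell out, including the separate treatment of $\sigma=\id_I$ and the observation that the $\sigma_2^{-1}$-alternative causes no collapse because every permutation is conjugate to its inverse).
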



\section{A few examples and counterexamples: some $\konftyp(15,4,20,3)$-configurations}
\label{subsec:konter}

In this Section we discuss some $\konftyp(15,4,20,3)$-configurations which appear to be
skew perspectives. Some of them were (up to an isomorphism) defined elsewhere,
they fall into some other classes of configurations.
Then we use the notation of the papers where `origins' can be found without definite 
explanation. But original definitions are useless in this place 
(sometimes we briefly quote the idea of a respective definition):
we merely want to show what `name' has the structure in that other papers.
No general important result follows from investigations of this Section;
the reader will stay more familiar with technical apparatus used in our theory
and with some fundamental examples of (really `skew') perspectives.

\begin{exm}\label{exm:0}
  Let $n=2k$, $I = I_{2k}$, and $\sigma = (1,2)(3,4)\ldots(2k-1,2k)$, \quad or 
  $n=2k+1$, $I = I_{2k} \cup \{0\}$, and $\sigma = (0)(1,2)(3,4)\ldots(2k-1,2k)$,
  for an integer $k\geq 2$. So, $\sigma$ is, in fact, a family of disjoint transpositions.
  The following is a direct consequence of \cite[Repr. 2.4]{skewgras}
  \begin{fact*}
    $\perspace(n,\sigma,{\GrasSpace(n,2)})$ is the combinatorial quasi Grassmannian 
    $\vergras_{n}$ of \cite{skewgras}.
  \end{fact*}
  In accordance with our theory developed in Subsection \ref{subsec:grasaxis},
  $\vergras_{2k}$ has exactly two $K_{2k+1}$ subgraphs and 
  $\vergras_{2k+1}$ has three $K_{2k+2}$-subgraphs 
  (see also \cite[Cor. 4.4]{klik:binom}).
\myend
\end{exm}
In particular, $\vergras_{4}$ is a $\konftyp(15,4,20,3)$-configuration.

\bigskip

All the $\konftyp(15,4,20,3)$-configurations with at least three free $K_5$-subgraphs
inside were listed in \cite[Classif. 2.8]{STP3K5}.
In particular, each of them is a binomial configuration which contains two maximal
complete subgraphs so, it is a perspective of two $K_5$ with an additional free $K_5$.
Let us analyse some, concrete, examples, which appear in accordance with
\ref{lem:nextgraf}.

Let ${\goth M} = \perspace(4,\sigma,{\GrasSpace(I_4,2)})$; suppose that 
$\Fix(\sigma)\neq \emptyset,I_4$ for a $\sigma\in S_{I_4}$.
\begin{exm}\label{exm:1}
  $\sigma = (1)(2,3,4)$.
  Then $\goth M$ coincides with the configuration defined in \cite[Classif. 2.8(ii)]{STP3K5}.
  To see this it suffices to represent it in the form of a system of triangle
  perspectives in accordance with Figure \ref{fig:exm1}.
  \begin{figure}
  \begin{center}
  \begin{minipage}[m]{0.6\textwidth}
    \xymatrix{%
    {\Delta_1:}
    &
    {c_{1,2}}\ar@{-}[dr]\ar@(dr,ur)@{-}[dd]
    &
    {c_{1,3}}\ar@{-}[dr]\ar@(dr,ur)@{-}[dd]
    &
    {c_{1,4}}\ar@{-}[dll]\ar@(dr,ur)@{-}[dd]
    \\
    {\Delta_2:}
    &
    {b_{3}}\ar@{-}[dr]
    &
    {b_{4}}\ar@{-}[dr]
    &
    {b_{2}}\ar@{-}[dll]
    \\
    {\Delta_3:}
    &
    {a_2}
    &
    {a_3}
    &
    {a_4}
    }
  \end{minipage}
  \end{center}
  \caption{The diagram of the line $\{ c_{2,3}, c_{2,4}, c_{3,4} \}$
  in $\perspace(4,{(1)(2,3,4)},{\GrasSpace(I_4,2)})$. \newline
  \strut\quad\quad 
  $c_{2,3} \in \overline{c_{1,2},c_{1,3}},\; \overline{b_3,b_4},\; \overline{a_2,a_3}$,
  $c_{3,4}\in \overline{c_{1,3},c_{1,4}},\; \overline{b_4,b_2},\; \overline{a_3,a_4}$,
  $c_{2,4} \in \overline{c_{1,2},c_{1,4}},\; \overline{b_3,b_2},\; \overline{a_2,a_4}$.
  \newline\strut\quad\quad
  $b_1$ is the centre of $\Delta_1$ and $\Delta_2$,
  $p$ is the centre of $\Delta_2$ and $\Delta_3$,
  and
  $a_1$ is the centre of $\Delta_1$ and $\Delta_3$
  (lines in the diagram join points which correspond each to other under respective
  perspective).%
\myend} 
  \label{fig:exm1}
  \end{figure}
\myend
\end{exm}
\begin{exm}\label{exm:2}
  $\sigma = (1)(2)(3,4)$. Then $\goth M$ coincides with the configuration defined in 
  \cite[Rem. 2.10(iii)]{STP3K5} -- cf. Figure \ref{fig:exm2}.
  Consequently, $\goth M$ is isomorphic to the so called {\em multi veblen} configuration
    $\xwlepp({I_4},{p},{L_{4}},{},{\GrasSpace(I_4,2)})$, 
  where $L_{4}$ is a linear graph on $I_4$.
  \begin{figure}
  \begin{center}
  \begin{minipage}[m]{0.6\textwidth}
\xymatrix{%
  {\Delta_1:}
  &
  {c_{1,2}}\ar@{-}[d]\ar@(dr,ur)@{-}[dd]
  &
  {c_{1,3}}\ar@{-}[d]\ar@(dr,ur)@{-}[dd]
  &
  {c_{1,4}}\ar@{-}[d]\ar@(dr,ur)@{-}[dd]
  \\
  {\Delta_2:}
  &
  {a_{2}}\ar@{-}[d]
  &
  {a_{3}}\ar@{-}[dr]
  &
  {a_{4}}\ar@{-}[dl]
  \\
  {\Delta_3:}
  &
  {b_2}
  &
  {b_4}
  &
  {b_3}
}
  \end{minipage}
  \end{center}
  \caption{The diagram of the line $\{ c_{2,3},c_{3,4},c_{2,4} \}$
  in $\perspace(4,{(1)(2)(3,4)},{\GrasSpace(I_4,2)})$. 
  \newline\strut\quad\quad
  $c_{2,3}\in\overline{c_{1,2},c_{1,3}},\;\overline{a_2,a_3},\;\overline{b_2,b_4}$,
  $c_{3,4}\in\overline{c_{1,3},c_{1,4}},\;\overline{a_3,a_4},\;\overline{b_4,b_3}$, and
  $c_{2,4}\in\overline{c_{1,2},c_{1,4}},\;\overline{a_2,a_4},\;\overline{b_2,b_3}$. 
  \newline\strut\quad\quad
  $a_1$ is the centre of $\Delta_1$ and $\Delta_2$,
  $p$ is the centre of $\Delta_2$ and $\Delta_3$, and
  $b_1$ is the centre of $\Delta_1$ and $\Delta_3$.%
  \myend}
  \label{fig:exm2}
  \end{figure}
\myend
\end{exm}
\begin{exm}\label{exm:3}
  Let  ${\goth M} = \xwlepp({I_4},{p},{L_{4}},{},{\GrasSpace(I_4,2)})$.
  It is known that 
      $\xwlepp({I_4},{p},{L_{4}},{},{\GrasSpace(I_4,2)}) \cong
      \xwlepp({I_4},{p},{K_{4}\setminus \{\{ 2,3 \}\}},{},{\GrasSpace(I_4,2)})$ 
  (cf. \cite[Thm. 4]{pascvebl}).
  Without coming into details let us quote 
  (after \cite[Constr. 4]{pascvebl}, 
  compare with Construction \ref{def:pers})
  that in an arbitrary multiveblen configuration 
  $\xwlepp({I},{p},{{\cal P}},{},{{\goth N}})$, 
  we have
  a centre $p$, the lines through $p$ with the points $a_i, b_i,\; i\in I$ as in
  ${\cal L}_p$,
  and a graph $\cal P$ defined on $I$
  which determines whether 
  $c_{i,j} = a_i \oplus a_j = b_i \oplus b_j$ ($\{ i,j \}\in{\cal P}$) or
  $c_{i,j} = a_i \oplus b_j = b_i \oplus a_j$ ($\{ i,j \}\notin{\cal P}$).
  Then the axis $\goth N$ is used as in the definition of $\perspace(n,\sigma,{\goth N})$
  to get ${\cal L}_C$.
  \par
  Let us quote after \cite[Cor. 2.13]{klik:binom} the following characterization,
  which will be needed in the sequel
  \par\noindent
  \begin{quotation}\noindent
  \refstepcounter{equation}\label{char:MV}
  { \em A $\binkonf(n,0)$-configuration is a multiveblen configuration with the axis $\GrasSpace(n-2,2)$ iff
  it contains at least $n-2$ free $K_{n-1}$-subgraphs.
  }\hfill\eqref{char:MV}
  \end{quotation}
  \par
  $\goth M$ can be represented as a perspective of two graphs
  $G_1 = \{ a_1,c_{1,2},c_{1,3},b_1 \}$ and $G_2 = \{ a_4,c_{2,4},c_{3,4},b_4 \}$
  with centre $q = c_{ 1,4 }$.
  \begin{fact*}
  ${\goth M} \cong \perspace(4,\id,{\goth N})$,
  where ${\goth N}\cong\VeblSpace(2)$ is the Veblen configuration with the lines
  \begin{ctext}
    $\{ 
    \{ e_{1,4}, e_{1,2}, e_{2,4} \}, 
    \{ e_{1,4}, e_{1,3}, e_{3,4} \}, 
    \{ e_{1,2}, e_{2,3}, e_{3,4} \},
    \{ e_{1,3}, e_{2,3}, e_{2,4} \}
    \}$,
  \end{ctext}
  $e_{i,j} = x_i\oplus x_j$, and $x_i$ are the vertices of $G_1$.
  \myend
  \end{fact*}
\end{exm}
Gathering together \ref{exm:2} and \ref{exm:3} we see that
\begin{ctext}
  $\perspace(4,\id,{\VeblSpace(2)}) \cong \perspace(4,{(1)(2)(3,4)},{\GrasSpace(I_4,2)})$
\end{ctext}
so, a skew perspective does not determine, geometrically, its centre and a 
labelling of the points in axial configuration.
\begin{exm}\label{exm:8}
  Let $\GrasSpacex(I_4,2)$ be the Veblen configuration whose lines are the $\varkappa$-images (see \ref{exm:7})
  of the lines of $\GrasSpace(I_4,2)$.
  Then, for every graph $\cal P$ defined on $I_4$ the structure
    ${\goth M} = \xwlepp({I_4},{p},{\cal P},{},{\GrasSpacex(I_4,2)})$
  contains four $K_5$-graphs: $G_i = \{ a_i,b_i \} \cup \{ c_{i,j}\colon j \in I_4 \setminus \{ i \} \}$
  with $i \in I_4$. However, no one of the $G_i$ is freely contained in $\goth M$
  and one can directly verify that $\goth M$ cannot be presented as a $\konftyp(15,4,20,3)$-perspective.
\myend
\end{exm}
\begin{exm}\label{exm:4}
  Let 
    ${\goth M} = \xwlepp({I_4},{p},{N_{4}},{},{\GrasSpace(I_4,2)})$
  (cf. \cite[Rem. 2.10(ii)]{STP3K5}, \cite[Constr.. 2]{pascvebl}),
  where $N_4$ is the empty graph on $4$ vertices.
  The structure $\goth M$ freely contains four $K_5$-subgraphs and it is homogeneous: 
  any two points in $C$ can be interchanged by an automorphism of $\goth M$.
  Let us represent $\goth M$ in the form $\perspace(4,\sigma,{\goth N})$ with 
  the centre
  $q = c_{1,2}$
  chosen as an example.
  Then the perspective graphs are 
  $G_1 = \{ a_1,b_1,c_{1,3},c_{1,4} \}$ and $G_2 = \{ b_2,a_2,c_{2,3},c_{2,4} \}$.
  We find then the following representation.
  \begin{fact*}
  ${\goth M} \cong \perspace(4,{(1,2)(3)(4)},{\goth N})$, where
  ${\goth N} \cong \VeblSpace(2)$
  is the Veblen configuration with the lines
  \begin{ctext}
    $\{ 
    \{ e_{1,2}, e_{1,3}, e_{2,3} \}, 
    \{ e_{1,2}, e_{1,4}, e_{2,4} \}, 
    \{ e_{1,3}, e_{2,4}, e_{3,4} \}, 
    \{ e_{1,4}, e_{2,3}, e_{3,4} \}
    \}$,
  \end{ctext}
  $e_{i,j}$ are defined as in \ref{exm:3}.
  \myend
  \end{fact*}
\end{exm}
\begin{exm}\label{exm:6}
  Examples \ref{exm:3} and \ref{exm:4} both can be generalized with the following
  computation. Let 
    ${\goth M} = \xwlepp({X},{p},{{\cal P}},{},{\GrasSpace(X,2)})$
  where $\cal P$ is a graph defined on $X$, $|X| = n$.
  Consider two complete free subgraphs $G_1,G_2$ of $\goth M$ intersecting in a point
  $q = c_{i,j}$. Without loss of generality we can assume that $i = 1,\, j = 2$
  and $X = \{ 1,\ldots, n \}$. Set $I_0 = \{ 3,\ldots,n \}$.
  Then
  \begin{multline}\label{post1}
   G_1 = \{ x_1 = a_1, x_2 = b_1, x_j = c_{1,j},\; j\in I_0 \} 
   \text{ and }
   \\
   G_2 = \{ y_1 = a_2, y_2 = b_2, y_j = c_{2,j},\; j\in I_0 \}. 
  \end{multline}
  Define \centerline{%
  \begin{math}  e_{i,j} = x_i \oplus x_j \text{ for } \{i,j\} \in \sub_2(X).\end{math}}
  Then we have
  \begin{equation}\label{nowaos1}
    e_{1,2} = p = y_1\oplus y_2,\quad
    e_{i,j} := c_{i,j} = y_i \oplus y_j 
    \text{ for all } \{ i,j \}\in \sub_2(I_0).
  \end{equation}
  Let us consider the two following cases:
  \begin{enumerate}[(A)] 
  \item\label{tak} $\{ 1,2 \} \in {\cal P}$ \item\label{nie} $\{ 1,2 \} \notin {\cal P}$.
  \end{enumerate}
  
  \par \strut\quad Assume \eqref{tak}.
  One can easily compute that $q = x_i \oplus y_i$ for $i\in I$.
  Moreover, we compute for $j \geq 3$ as follows:
  \par\noindent\centerline{
  $e_{1,j} = \left\{  \begin{array}{ll} 
               a_j & \text{ when }\{ 1,j \} \in {\cal P} \\
	       b_j & \text{ when }\{ 1,j \} \notin {\cal P}
	       \end{array} \right.$
  and
  $e_{2,j} = \left\{  \begin{array}{ll} 
               b_j & \text{ when }\{ 1,j \} \in {\cal P} \\
	       a_j & \text{ when }\{ 1,j \} \notin {\cal P}
	       \end{array} \right.$.}
  Analogously, we compute
  \par\noindent\centerline{
  $y_1\oplus y_j = \left\{  \begin{array}{ll} 
               a_j & \text{ when }\{ 2,j \} \in {\cal P} \\
	       b_j & \text{ when }\{ 2,j \} \notin {\cal P}
	       \end{array} \right.$
  and
  $y_2\oplus y_j = \left\{  \begin{array}{ll} 
               b_j & \text{ when }\{ 2,j \} \in {\cal P} \\
	       a_j & \text{ when }\{ 2,j \} \notin {\cal P}
	       \end{array} \right.$.}
  The formulas above and the formula \eqref{nowaos1} determine the skew:
  %
  \par\medskip\noindent\refstepcounter{equation}
  \label{skos1}
  \begin{math}
  \strut\hfill
  \sigma(\{ i,j \}) = \{ i,j\} 
    \text{ for } \{ i,j \} \in \sub_2(I_0) \cup \{\{ 1,2 \}\},
    \text{ let } j\geq 3:
   \hfill\strut \\
    \strut\quad\quad\sigma\colon \{ 1,j \} \mapsto \{ 2,j \} \mapsto \{ 1,j \}
    \\
    \strut \hfill \text{ when } \{ 1,j \}\in{\cal P},\{2,j\} \in {\cal P} \text{ or }
    \{ 1,j \}\notin{\cal P},\{2,j\} \notin {\cal P},
    \\
    \strut\quad\quad\sigma\colon \{ 1,j \} \mapsto \{ 1,j \}, 
    \sigma\colon \{ 2,j \} \mapsto \{ 2,j \}
    \\
    \strut\hfill \text{ when } \{ 1,j \},\{2,j\} \in {\cal P} \text{ or }
    \{ 1,j \},\{2,j\} \notin {\cal P}.
\end{math}\quad\eqref{skos1}
%
  \par\medskip\noindent
  Finally, let ${\cal P}_0$ be the restriction of $\cal P$ to $\sub_2(I_0)$.
  We conclude with
  \begin{facto}{\ref{exm:6}}
    In case \eqref{tak}, ${\goth M} \cong \perspace(n,\sigma,{\goth N})$,
    where ${\goth N} = \xwlepp({I_0},{p},{{\cal P}_0},{},{\GrasSpace(I_0,2)})$
    and $\sigma$ is defined by \eqref{skos1}.
  \end{facto}
  \par\noindent\strut\quad
  Now, let us pass to the case \eqref{nie}.
  In this case we only slightly renumber the elements of $G_1$ and $G_2$
  (cf. \eqref{post1}):
  \begin{multline}\label{post2}
   G_1 = \{ x_1 = a_1, x_2 = b_1, x_j = c_{1,j},\; j\in I_0 \} 
   \text{ and }
   \\
   G_2 = \{ y_1 = b_2, y_2 = a_2, y_j = c_{2,j},\; j\in I_0 \}. 
  \end{multline}
  Clearly, $e_{i,j}$ take values as in \eqref{tak}.
  Differences appear when we compute for $j\geq 3$
  \par\noindent\centerline{
  $y_1\oplus y_j = \left\{  \begin{array}{ll} 
               a_j & \text{ when }\{ 2,j \} \notin {\cal P} \\
	       b_j & \text{ when }\{ 2,j \} \in {\cal P}
	       \end{array} \right.$
  and
  $y_2\oplus y_j = \left\{  \begin{array}{ll} 
               b_j & \text{ when }\{ 2,j \} \notin {\cal P} \\
	       a_j & \text{ when }\{ 2,j \} \in {\cal P}
	       \end{array} \right.$.}
  Now, the skew is determined by the following conditions:
  \par\medskip\noindent\refstepcounter{equation}
  \label{skos2}
  \begin{math}
  \strut\hfill
    \sigma(\{ i,j \}) = \{ i,j\} 
    \text{ for } \{ i,j \} \in \sub_2(I_0) \cup \{\{ 1,2 \}\}, 
    \text{ let } j\geq 3:
    \hfill\strut \\
    \strut\quad\quad \sigma\colon \{ 1,j \} \mapsto \{ 2,j \} \mapsto \{ 1,j \}
    \\
    \strut\hfill \text{ when } \{ 1,j \},\{2,j\} \in {\cal P} \text{ or }
    \{ 1,j \},\{2,j\} \notin {\cal P},
    \\
    \strut\quad\quad
    \sigma\colon \{ 1,j \} \mapsto \{ 1,j \}, 
    \sigma\colon \{ 2,j \} \mapsto \{ 2,j \}
    \\
    \strut\hfill \text{ when } \{ 1,j \}\in{\cal P},\{2,j\} \in {\cal P} \text{ or }
    \{ 1,j \}\notin{\cal P},\{2,j\} \notin {\cal P}.
  \end{math}\quad\eqref{skos2}
  \par\medskip\noindent
  We conclude with
  \begin{facto}{\ref{exm:6}}
    In case \eqref{nie}, ${\goth M} \cong \perspace(n,\sigma,{\goth N})$,
    where ${\goth N} = \xwlepp({I_0},{p},{{\cal P}_0},{},{\GrasSpace(I_0,2)})$
    and $\sigma$ is defined by \eqref{skos2}.
  \end{facto}
  In particular, we obtain the following generalizations of \ref{exm:4}
  and a folklore.
  \begin{facto}{\ref{exm:6}}
    \begin{sentences}\itemsep-2pt
    \item
      $\xwlepp({X},{p},{N_X},{},{\GrasSpace(X,2)}) \cong 
       \perspace(n,\overline{\sigma},{\goth N})$,
      where 
      ${\goth N} = \xwlepp({I_0},{p},{N_{I_0}},{},{\GrasSpace(I_0,2)})$
      and $\sigma = {(1,2)(3)\ldots(n)}$.
    \item
      $\xwlepp({X},{p},{K_X},{},{\GrasSpace(X,2)}) \cong 
       \perspace(n,\overline{\sigma},{\goth N})$,
      where 
      ${\goth N} = \xwlepp({I_0},{p},{K_{I_0}},{},{\GrasSpace(I_0,2)})$
      and $\sigma = \id_X$.
  \myend
  \end{sentences}
  \end{facto}
\end{exm}
Finally, combining \ref{lem:nextgraf}, \eqref{char:MV},
and \ref{exm:6} we obtain the following.
\begin{prop}\label{prop:pers-mveb}
  Let $I = I_n$. Assume that $\goth M$ is not a generalized Desargues configuration.
  If a multiveblen configuration 
    ${\goth M} = \xwlepp({I},{p},{{\cal P}},{},{\GrasSpace(I,2)})$
  is isomorphic to 
    $\perspace(n,\sigma,{\goth N})$
  where $\sigma = \overline{\sigma_0}$, $\sigma_0\in S_I$
  and $\goth N$ is a binomial \PSTS\ defined on $\sub_2(I)$
  then, up to an isomorhism,
  $\sigma_0 = (1,2)(3)\ldots(n)$ and either
  $\{ 1,2 \}\in{\cal P}$, 
  $\{ 1,i \} \in {\cal P}$ iff $\{ 2,i \} \in {\cal P}$
  for all $j=3,\ldots,n$,
  or
  $\{ 1,2 \}\notin{\cal P}$, 
  $\{ 1,i \} \in {\cal P}$ iff $\{ 2,i \} \notin {\cal P}$
  for all $j=3,\ldots,n$,
  and 
  $\goth N$  is a multiveblen configuration determined by the graph obtained by 
  deleting from $\cal P$ the vertices $1$ and $2$.
\end{prop}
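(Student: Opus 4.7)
The plan is to combine the enumeration of free maximal complete subgraphs of $\goth M$ with Lemma \ref{lem:nextgraf} in order to pin down the cycle structure of $\sigma_0$, and then to read off $\cal P$ and $\goth N$ from the explicit computation of Example \ref{exm:6}.

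First I would show that $\sigma_0$ is a transposition. Since ${\goth M}=\xwlepp({I},{p},{\cal P},{},{\GrasSpace(I,2)})$ is a $\binkonf(n+2,0)$-configuration that is multiveblen with axis $\GrasSpace(n,2)$, the characterization \eqref{char:MV} guarantees that $\goth M$ freely contains at least $n$ copies of $K_{n+1}$. Transporting these along the isomorphism to $\perspace(n,\overline{\sigma_0},{\goth N})$, Lemma \ref{lem:nextgraf} identifies them as $K_{A^\ast}$, $K_{B^\ast}$, and the subgraphs $G_{(i_0)}$ with $i_0\in\Fix(\sigma_0)$. Hence $|\Fix(\sigma_0)|\geq n-2$. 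Since $\goth M$ is not the generalized Desargues configuration, $\sigma_0\neq\id_I$; and since no permutation fixes exactly $n-1$ elements of an $n$-set, we must have $|\Fix(\sigma_0)|=n-2$, so $\sigma_0$ is a single transposition. Using the relabeling freedom granted by Proposition \ref{prop:iso1}, we may assume $\sigma_0=(1,2)(3)\ldots(n)$.

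Next I would determine $\cal P$ by exploiting the fact that $c_{1,2}$ is a second available centre of the perspective. By Example \ref{exm:6}, the configuration $\goth M$ admits a representation $\perspace(n,\sigma',{\goth N}')$ with centre $q=c_{1,2}$ and with perspective graphs $G_1,G_2$ as in \eqref{post1} (case $\{1,2\}\in{\cal P}$) or \eqref{post2} (case $\{1,2\}\notin{\cal P}$); the resulting skew $\sigma'$ is described branch-by-branch by formulas \eqref{skos1} respectively \eqref{skos2}. But by Proposition \ref{prop:movecenter} together with Proposition \ref{prop:iso1}, the $p$-centred and $c_{1,2}$-centred presentations must have skews of the same cycle type (up to inversion); requiring $\sigma'$ to act on the pairs $\{1,j\},\{2,j\}$ in exactly the transpositional manner forced by $\sigma_0=(1,2)$ then selects, branch by branch in \eqref{skos1} and \eqref{skos2}, the admissible values of the indicators $\{1,j\}\in{\cal P}$ versus $\{2,j\}\in{\cal P}$ for $j\geq 3$. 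This yields the dichotomy stated in the proposition.

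Finally, to identify $\goth N$ I invoke Example \ref{exm:6} once more: the axial configuration of the $c_{1,2}$-centred presentation is ${\goth N}'\cong\xwlepp({I_0},{p},{{\cal P}_0},{},{\GrasSpace(I_0,2)})$, where $I_0=I\setminus\{1,2\}$ and ${\cal P}_0$ is the restriction of $\cal P$ to $\sub_2(I_0)$. Since the axial configuration is determined up to isomorphism by $\goth M$ and the chosen centre, ${\goth N}\cong{\goth N}'$ is precisely the multiveblen configuration obtained by deleting the vertices $1$ and $2$ from $\cal P$. The main technical obstacle is the branchwise matching above: one must check that the skew obtained from the $c_{1,2}$-centred presentation coincides, pair by pair, with the action of $\overline{(1,2)}$ forced by the $p$-centred presentation. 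This case analysis, driven by formulas \eqref{skos1}, \eqref{skos2} and by the $a\leftrightarrow b$ interchange occurring in case \eqref{nie} of Example \ref{exm:6}, is the real content; the other steps are bookkeeping on the basis of the earlier results.
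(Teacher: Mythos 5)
Your skeleton --- count the free $K_{n+1}$-subgraphs via \eqref{char:MV} and \ref{lem:nextgraf} to bound $\Fix(\sigma_0)$ from below, then read off ${\cal P}$ and $\goth N$ from the computation of \ref{exm:6} --- is exactly the combination of results the paper invokes, and the counting step giving $|\Fix(\sigma_0)|\geq n-2$ is sound. However, your exclusion of the identity is not: you argue that ``since $\goth M$ is not the generalized Desargues configuration, $\sigma_0\neq\id_I$'', but $\perspace(n,\id,{\goth N})$ is generalized Desargues only when ${\goth N}\cong\GrasSpace(n,2)$, and the paper's own Example \ref{exm:3} exhibits $\xwlepp({I_4},{p},{L_{4}},{},{\GrasSpace(I_4,2)})\cong\perspace(4,\id,{\VeblSpace(2)})$ --- a non-Desarguesian multiveblen configuration that \emph{does} admit a representation with identity skew. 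So the hypotheses do not force the given $\sigma_0$ to be a transposition; the conclusion can only be that some representation (obtained by moving the centre, as in \ref{exm:6}) has skew $(1,2)(3)\ldots(n)$, and your proof must handle the branch $\sigma_0=\id$, ${\goth N}\not\cong\GrasSpace(n,2)$ rather than dismiss it.

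The second gap is in your determination of ${\cal P}$: you claim that by \ref{prop:movecenter} and \ref{prop:iso1} the $p$-centred and $c_{1,2}$-centred presentations must have skews of the same cycle type up to inversion. Proposition \ref{prop:iso1} only governs isomorphisms with $f(p)=p$; an isomorphism relating presentations with different centres does not fix the centre, and the paper explicitly notes after \ref{exm:3} that a skew perspective does not determine its centre --- indeed $\perspace(4,\id,{\VeblSpace(2)})\cong\perspace(4,{(1)(2)(3,4)},{\GrasSpace(I_4,2)})$ shows the two skews need not even be conjugate. The intended route is more direct: formulas \eqref{skos1} and \eqref{skos2} already give the skew $\sigma'$ of the $c_{1,2}$-centred presentation explicitly, and one observes that $\sigma'$ has the form $\overline{\sigma'_0}$ for some $\sigma'_0\in S_I$ precisely when all pairs $\{1,j\},\{2,j\}$, $j\geq 3$, are treated uniformly by ${\cal P}$, which is the stated dichotomy; no comparison of cycle types across centres is needed or available. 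Your final identification of $\goth N$ with the multiveblen configuration on ${\cal P}_0$ is correct and is exactly what \ref{exm:6} supplies.
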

\begin{rem} 
  The two cases of $\{ 1,2 \} \{\in \text{ or } \notin\} {\cal P}$ above are, in fact,
  superflous. From \cite[Prop. 9]{pascvebl} we know that, up to an isomorphism
  we can always assume that $\{ 1,2 \}\in{\cal P}$.
\end{rem}
Consequently, \ref{exm:6} characterizes all the binomial configurations which
are simultaneously multiveblen configurations and skew perspectives 
preserving edge-concurrency.

\bigskip
Another example which is worth to consider is a combinatorial Veronesian $\VerSpace(3,k)$
of \cite{combver}.
This example shows, primarily, that not every "sensibly roughly presented" perspective 
$\perspace(n,\sigma,{\goth N})$ 
between complete graphs
has necessarily a `Desarguesian axis' nor its skew 
preserves the adjacency of edges of the graphs in question.
\begin{exm}\label{exm:5}
  Let us adopt the notation of \cite{combver}.
  Let $|X| = 3$, $X = \{ a,b,c \}$.
  Then the combinatorial Veronesian $\VerSpace(X,k) =: {\goth M}$ is 
  a $\binkonf(k,+2)$-configuration; 
  its point set is the set $\msub_k(X)$ of the $k$-element multisets with 
  elements in $X$
  and the lines have form $e X^s$, $e\in \msub_{k-s}(X)$.
  $\VerSpace(X,1)$ is a single line, $\VerSpace(X,2)$ is the Veblen configuration,
  and $\VerSpace(X,3)$ is the known Kantor configuration 
  (comp. \cite[Prop's. 2.2, 2.3]{combver}, \cite[Repr. 2.7]{klik:VC}). 
  Consequently, we assume $k > 3$.
  The following was noted in \cite[Fct. 4.1]{klik:binom}:
  \begin{ctext}
    The $K_{k+1}$ graphs freely contained in $\VerSpace(X,k)$ are the sets
    $X_{a,b} := \msub_k(\{ a,b \})$, 
    $X_{b,c} := \msub_k(\{ b,c \})$, and 
    $X_{c,a} := \msub_k(\{ c,a \})$.
  \end{ctext}
  In particular, $\goth M$ contains two complete subgraphs $X_{a,b}$, $X_{c,a}$, which cross
  each other in $p = a^k$. Let us present $\goth M$ as a perspective between
  these two graphs.
  Let us re-label the points of $\VerSpace(X,k)$:
  \begin{ctext}
    $c_i = b^i a^{k-i}$, $b_i = c^i a^{k-i}$, $i\in\{1,\ldots,k\} =: I$,\space
    $e_{i,j} = c_i \oplus c_j$, $\{ i,j \}\in \sub_2(I)$.
  \end{ctext}
  Clearly, $p \oplus c_i = b_i$ so, the map 
    $\big(c_i \mapsto b_i,\; i\in I\big)$
  is a point-perspective.
  Let us define the permutation $\sigma$ of $\sub_2(I)$ by the formula
  \begin{ctext}
  $\sigma(\{ i,j \}) = \{ j-1,j \}$ when $1\leq i < j \leq k$.
  \end{ctext}
  It is seen that $\sigma = \sigma^{-1}$.
  After routine computation we obtain  
  $b_i \oplus b_j = c_{\sigma(\{ i,j\})}$ whenever $i < j$; moreover, 
  in this representation the axial configuration consists of the points
in $bc \msub_{k-2}(X)$ so, it is isomorphic to $\VerSpace(X,k-2)$.
  Consequently, $\VerSpace(X,k) \cong \perspace(k,\sigma,{\VerSpace(X,k-2)})$.
%
It is seen that there is no permutation $\varphi\in S_I$ such 
that $\{ \varphi(i),\varphi(j) \} = \{ j-i,j \}$ for all $i < j$, 
unless $|I|=2 \not\geq 4$.
This can be summarized in the following
\begin{fact*}
  The binomial configuration $\VerSpace(3,k)$ with $k>3$ cannot be presented
  as a skew perspective, with the skew determined by a permutation or by
  the complementing in the set of indices. Though it represents a perspective of two 
  simplices.
\myend
\end{fact*}
\end{exm}

\section{Few remarks on projective realizability of skew perspectives}\label{ssec:pers2proj}

Our construction \ref{def:pers}, a generalization of a projective perspective, originates in studying
arrangements of points and lines of a (real) projective space.
So, the question whether (an which) skew perspectives can be realized in a Desarguesian 
projective space is quite natural. For ${10}_{3}$-configurations of the type
$\perspace(3,\sigma,{\GrasSpace(I_3,2)})$ the answer is affirmative (all three are realizable!) and is known for ages.
For structures $\perspace(4,\sigma,{\GrasSpace(I_4,2)})$, which are primarily investigated in this Section,
situation is more complex.
Let us begin with results easily derivable from known facts.

\begin{prop}\label{pers2proj:mveb}
  Let $\sigma\in S_{I_n}$ and $C(\sigma)$ be one of the following:
  $(1,\ldots,1)$, $(1,2,\ldots,2))$, $(2,\ldots,2)$.
  Then $\perspace(n,\sigma,{\GrasSpace(I_n,2)})$ can be realized in a real projective space.
\end{prop}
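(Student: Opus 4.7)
The plan is to treat the three admissible cycle types separately and reduce each case to a projective realization available in the literature.

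For $C(\sigma) = (1,\ldots,1)$ one has $\sigma = \id_{I_n}$, and I would simply quote equation \eqref{gras:pers}, which identifies $\perspace(n,\id_{I_n},{\GrasSpace(I_n,2)})$ with the combinatorial Grassmannian $\GrasSpace(n+2,2)$. The latter is the generalized Desargues configuration and admits the classical realization inside any Desarguesian (in particular real) projective space of dimension $\geq 2$: pick $n{+}2$ points in general position and read off the $2$-subset points and $3$-subset lines of $\GrasSpace(n+2,2)$ directly from them, exactly as in the constructions quoted from \cite{perspect}, \cite{doliwa2} in the introduction.

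For the two remaining cycle types $C(\sigma) \in \{(2,\ldots,2),\,(1,2,\ldots,2)\}$, the permutation $\sigma$ is an involution, fixed-point-free when $n$ is even and with exactly one fixed point when $n$ is odd. By Fact \ref{fct:conjug} combined with Proposition \ref{prop:class-grasaxis}, the isomorphism type of $\perspace(n,\sigma,{\GrasSpace(I_n,2)})$ depends only on the conjugacy class of $\sigma$, so I would replace $\sigma$ by the canonical representative $\sigma_0 = (1,2)(3,4)\cdots(2k-1,2k)$, with an extra fixed point $0$ adjoined if $n = 2k+1$. Example \ref{exm:0} then identifies the resulting structure with the combinatorial quasi-Grassmannian $\vergras_n$ of \cite{skewgras}, for which an explicit embedding into the real projective space is constructed in that paper.

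The main obstacle is not mathematical depth but the cleanliness of the packaging: once the conjugacy-invariance reduction via Proposition \ref{prop:class-grasaxis} is in place, all three cases collapse to quoting realizations established elsewhere, namely $\GrasSpace(n+2,2)$ by classical projective geometry and $\vergras_n$ by \cite{skewgras}. The only substantive verification is to confirm that the three cycle types listed in the hypothesis are precisely those for which Example \ref{exm:0} applies (the two involutive classes) together with the identity, which is immediate from the definition of $C(\sigma)$ and from the form of $\sigma$ stipulated in Example \ref{exm:0}.
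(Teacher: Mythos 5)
Your proposal is correct and follows essentially the same route as the paper: the identity case is handled via \eqref{gras:pers} and the classical realization of the generalized Desargues configuration, and the involutive cases are reduced (by conjugacy invariance) to the canonical representative, identified with the quasi-Grassmannian $\vergras_n$ via Example \ref{exm:0}, and then settled by quoting the embeddings of \cite{skewgras}. The only cosmetic difference is that you make the conjugacy-reduction step explicit through Proposition \ref{prop:class-grasaxis}, where the paper performs it silently by writing $\sigma$ directly in normal form.
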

\begin{figure}
\begin{center}
\includegraphics[scale=0.5]{pers2xC2.eps}
\end{center}
\caption{The structure $\perspace(4,{(1,2)(3,4)},{\GrasSpace(I_4,2)}) = {\goth R}_4$, %
the smallest {\em not commonly known} example of the structures defined in \ref{pers2proj:mveb}.}
\label{fig:2xC2}
\end{figure}

\begin{proof}
  Write ${\goth M} = \perspace(n,\sigma,{\GrasSpace(I_n,2)})$
  Note that in the first case $\sigma = \id_{I_n}$, and $\goth M$ is the generalized Desargues configuration, 
  see \eqref{gras:pers}. 
  In the second and the third case $\sigma$ can be written in the form $(n)(1,2)(3,4)\ldots(n-2,n-1)$ and
  $(1,2)\ldots(n-1,n)$ resp. and then $\goth M$ is a combinatorial quasi Grassmannian, see Example \ref{exm:0}.
  In all these cases the claim follows from the results of
  \cite[Thm. 2.17]{mveb2proj} and \cite[Prop. 1.6 and Prop.'s 3.6-3.8]{skewgras}.
\end{proof}
We have also an evident lemma:
\begin{lem}\label{lem:ciach1}
  Let $\sigma\in S_I$, $J\subset I$, and $\sigma(J) = J$; set $\sigma_0 := \sigma\restriction_{J}$. 
  Then $\perspace(|J|,\sigma_0,{\GrasSpace(J,2)})$ is a subconfiguration of $\perspace(|I|,\sigma,{\GrasSpace(I,2)})$.
\end{lem}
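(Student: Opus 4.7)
The plan is to exhibit the natural inclusion map from $\perspace(|J|,\sigma_0,{\GrasSpace(J,2)})$ into $\perspace(|I|,\sigma,{\GrasSpace(I,2)})$ induced by $J \hookrightarrow I$, and verify line-by-line from Construction \ref{def:pers} that every line of the smaller structure is a line of the larger one with the same underlying point-set. Explicitly, I send $a_j \mapsto a_j$, $b_j \mapsto b_j$, $p \mapsto p$ for $j\in J$, and $c_u \mapsto c_u$ for $u\in\sub_2(J)$; since $\sub_2(J)\subset\sub_2(I)$, this is an injection of point-sets.

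First I would record the consequence of the invariance hypothesis that will be used repeatedly: since $\sigma(J)=J$, the restriction $\sigma_0 = \sigma\restriction_J$ is a permutation of $J$ and $\sigma_0^{-1} = \sigma^{-1}\restriction_J$, so $\sigma_0^{-1}(\{i,j\}) = \sigma^{-1}(\{i,j\}) \in\sub_2(J)$ for each $\{i,j\}\in\sub_2(J)$. Then I check the four families of lines. A line of ${\cal L}_p$ in the smaller structure is $\{p,a_j,b_j\}$ with $j\in J$, which is a line of ${\cal L}_p$ for the bigger structure. A line of ${\cal L}_A$ is $\{a_i,a_j,c_{\{i,j\}}\}$ with $\{i,j\}\in\sub_2(J)\subset\sub_2(I)$, again a line of the bigger ${\cal L}_A$. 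Similarly, a line of ${\cal L}_B$ in $\perspace(|J|,\sigma_0,{\GrasSpace(J,2)})$ has the form $\{b_i,b_j,c_{\sigma_0^{-1}(\{i,j\})}\}$; by the opening remark this equals $\{b_i,b_j,c_{\sigma^{-1}(\{i,j\})}\}$, i.e. the corresponding line of ${\cal L}_B$ of the larger perspective. Finally, the lines of $\GrasSpace(J,2)$ are the triples $\{c_{\{i,j\}},c_{\{j,k\}},c_{\{i,k\}}\}$ with distinct $i,j,k\in J$, which are lines of $\GrasSpace(I,2)$.

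There is no real obstacle of substance; the only point that requires the hypothesis is the treatment of ${\cal L}_B$, where without $\sigma(J)=J$ the index $\sigma^{-1}(\{i,j\})$ could fall outside $\sub_2(J)$ and the candidate image-line would involve a $c$-point not in the smaller structure. Thus the $\sigma$-invariance of $J$ is exactly what is needed for the inclusion to be well-defined and incidence-preserving, yielding $\perspace(|J|,\sigma_0,{\GrasSpace(J,2)})$ as a subconfiguration of $\perspace(|I|,\sigma,{\GrasSpace(I,2)})$.
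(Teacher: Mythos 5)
Your proof is correct. The paper offers no argument at all for this lemma (it is introduced with ``We have also an evident lemma''), and your verification via the natural inclusion $J\hookrightarrow I$ -- checking the four line families and isolating the role of the hypothesis $\sigma(J)=J$ in keeping $c_{\sigma^{-1}(\{i,j\})}$ inside $\{c_u\colon u\in\sub_2(J)\}$ for the lines of ${\cal L}_B$ -- is exactly the routine check the authors left to the reader.
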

\begin{prop}\label{pers2proj:L4}
  Let $\sigma\in S_{I_n}$. Assume that $C(\sigma)$ contains the sequence $(1,1,2)$ as its subsequence.
  Then $\perspace(n,\sigma,{\GrasSpace(I_n,2)})$ cannot be realized in any Desarguesian projective space.
\end{prop}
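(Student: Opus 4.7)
The plan is to reduce the claim to a single small concrete configuration on four indices and then invoke either a direct coordinate computation or the known realizability classification for multi-Veblen configurations.

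First I would use Lemma \ref{lem:ciach1} to cut down to the case $n = 4$. Since $C(\sigma)$ contains the subsequence $(1,1,2)$, there are two distinct fixed points $i_1, i_2 \in \Fix(\sigma)$ together with a transposition $(i_3, i_4)$ of $\sigma$ disjoint from $\{i_1, i_2\}$. Set $J = \{i_1, i_2, i_3, i_4\}$; then $\sigma(J) = J$ and, relabeling $J$ as $I_4$, the restriction $\sigma_0 := \sigma|_J$ becomes $(1)(2)(3,4)$. By Lemma \ref{lem:ciach1}, $\perspace(4, \sigma_0, {\GrasSpace(I_4, 2)})$ is a subconfiguration of $\perspace(n, \sigma, {\GrasSpace(I_n, 2)})$. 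Since a projective realization of any partial linear space automatically restricts to a realization of each of its sub-partial-linear-spaces, it suffices to prove that $\perspace(4, (1)(2)(3,4), {\GrasSpace(I_4, 2)})$ does not embed in any Desarguesian projective space.

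Next I would invoke Example \ref{exm:2} to identify this base configuration with the multi-Veblen configuration $\xwlepp({I_4},{p},{L_{4}},{},{\GrasSpace(I_4, 2)})$, where $L_4$ is the path graph on four vertices. The projective realizability of multi-Veblen configurations with generalized Desargues axis is characterized in \cite[Thm.~2.17]{mveb2proj}; among the graphs on $I_4$ only the empty $N_4$ and the complete $K_4$ yield realizable configurations. These correspond, via Example \ref{exm:6} and Proposition \ref{pers2proj:mveb}, precisely to the realizable cycle types $(2,2)$ and $(1,1,1,1)$ of $\sigma$. Because $L_4$ is neither empty nor complete, the base configuration is not realizable, and together with the reduction this yields the claim.

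As a self-contained alternative, one can argue directly with coordinates. Realize ${\GrasSpace(I_4, 2)}$ as the complete quadrilateral cut out by four generic lines in a projective plane, which fixes the six points $c_{i,j}$; realize the $a$-graph (possible by the known realizability of ${\GrasSpace(I_6, 2)}$ and its specialization); then fix a center $p$ off the axis and parametrize each $b_i$ by its affine coordinate $\tau_i$ on $\LineOn(p,a_i)$. The six incidences $c_{\sigma^{-1}(i), \sigma^{-1}(j)} \in \LineOn(b_i, b_j)$ become a system of equations in the parameters $\tau_i$ which, by routine elimination in normal form (placing for instance $a_1=(1{:}1{:}1)$, $a_2=(1{:}1{:}2)$, $a_3=(1{:}2{:}1)$, $a_4=(2{:}1{:}1)$ and $p$ generic), is overdetermined and admits no solution. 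The main obstacle in either route is the same: identifying the precise Desarguesian identity among the six $c_{i,j}$ forced by applying Desargues' theorem to all four triangle pairs $(a_i a_j a_k, b_i b_j b_k)$, and verifying that this identity fails for a generic complete quadrilateral.
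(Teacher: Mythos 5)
Your proposal is correct and follows essentially the same route as the paper's proof: restrict to the four indices supporting the $(1,1,2)$ pattern to obtain the subconfiguration $\perspace(4,{(1)(2)(3,4)},{\GrasSpace(I_4,2)})$, identify it via Example \ref{exm:2} with the multiveblen configuration $\xwlepp({I_4},{p},{L_{4}},{},{\GrasSpace(I_4,2)})$, and quote its non-realizability from \cite{mveb2proj} (the paper cites Prop.~2.3 there rather than Thm.~2.17, but this is the same source and the same fact). The additional coordinate sketch is not needed and is not part of the paper's argument.
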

\begin{proof}
  Write ${\goth M} = \perspace(n,\sigma,{\GrasSpace(I_n,2)})$, 
  $\sigma_0 = (1)(2)(3,4)$, and 
  ${\goth M}_0 = \perspace(n,\sigma_0,{\GrasSpace(I_4,2)})$.
  Clearly, ${\goth M}_0$ is a subconfiguration of $\goth M$.
  From Example \ref{exm:2} and \cite[Prop. 2.3]{mveb2proj} we know that ${\goth M}_0$ cannot be realized in any
  Desarguesian projective space, which closes our proof.
\end{proof}
%
%

We say that a configuration $\goth M$ is {\em planar} if for any realization of $\goth M$ in a projective 
space $\goth P$ this realization lies on a plane of $\goth P$. Note that, anyway, even if $\goth M$
canot be realized in any Desarguesian projective space then it can be extended to a projective plane. 
So, in fact, in the definition above we can restrict ourselves to Desarguesian $\goth P$. And a configuration
nonrealizable in a Desarguesian projective space is, by definition, planar.

\begin{lem}\label{lem:p2p:plan1}
  Let $\sigma\in S_n$ be a cycle of length $n$, $n\geq 3$.
  The configuration $\perspace(n,\sigma,{\GrasSpace(I_n,2)})$ is planar. 
\end{lem}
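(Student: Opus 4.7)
The plan is to fix any realization $\rho$ of ${\goth M} := \perspace(n,\sigma,{\GrasSpace(I_n,2)})$ in a projective space $\goth P$ and show that its image lies in the projective subspace $\Pi$ spanned by the three points $p$, $a_1$, $a_2$. Since $\Pi$ has dimension at most two, if the image lies inside $\Pi$ it lies in a plane of $\goth P$, and planarity follows. Without loss of generality I would take $\sigma = (1, 2, \ldots, n)$, so that $\sigma^{-1}(i) \equiv i - 1 \pmod{n}$ and consequently $b_i \oplus b_j = c_{\{i-1, j-1\}}$ throughout.

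For the base case I would observe that $b_1 \in \LineOn(p, a_1, b_1) \subset \Pi$ and likewise $b_2 \in \Pi$, while also $c_{1,2} = a_1 \oplus a_2 \in \Pi$ and $c_{1,n} = b_1 \oplus b_2 \in \Pi$. The main step is the claim, proved by induction on $i$, that $a_i, b_i \in \Pi$ for every $i \in I_n$. Fix $3 \leq i \leq n$ and suppose the claim holds for all smaller indices. Then $c_{\{1, i-1\}} = a_1 \oplus a_{i-1} \in \Pi$. The triple $\{1, n, i-1\}$ consists of three distinct elements of $I_n$ (because $3 \leq i \leq n$), so $\GrasSpace(I_n, 2)$ contributes the axial line $\{c_{\{1, n\}}, c_{\{1, i-1\}}, c_{\{n, i-1\}}\}$. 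Two of its points are already in $\Pi$ and are distinct (since $\{1, n\} \neq \{1, i-1\}$), so the projective line spanned by them lies in $\Pi$, forcing $c_{\{n, i-1\}} \in \Pi$. Now $c_{\{n, i-1\}} = c_{\sigma^{-1}(\{1, i\})} = b_1 \oplus b_i$ with $b_1 \in \Pi$, so the configuration line $\LineOn(b_1, b_i, c_{\{n, i-1\}})$ is contained in $\Pi$, placing $b_i \in \Pi$. Finally $a_i \in \LineOn(p, a_i, b_i) \subset \Pi$, completing the inductive step.

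Once $a_i \in \Pi$ for every $i$, all remaining points $c_{\{i, j\}} = a_i \oplus a_j$ lie in $\Pi$ as well, so the whole realization is contained in $\Pi$, proving planarity. The only piece of bookkeeping is the standard nondegeneracy assumption for a realization, namely that distinct configuration points map to distinct projective points; this is what makes joins such as $\LineOn(c_{\{1, n\}}, c_{\{1, i-1\}})$ uniquely determined and validates the step "two points of a configuration line lying in $\Pi$ force the third into $\Pi$". I do not anticipate any deeper obstacle; the essential content of the lemma is that the cycle structure of $\sigma$ makes the Grassmannian propagation $c_{\{1, n\}}, c_{\{1, i-1\}} \rightsquigarrow c_{\{n, i-1\}}$ reach every index of $I_n$ without gaps, so a single planar induction suffices.
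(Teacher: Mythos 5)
Your proof is correct. The base case puts $b_1,b_2,c_{\{1,2\}},c_{\{1,n\}}$ into $\Pi$, the triple $\{1,n,i-1\}$ really is a $3$-subset of $I_n$ for every $3\le i\le n$ (this is where $n\ge 3$ is used), and the identity $c_{\{n,i-1\}}=c_{\sigma^{-1}(\{1,i\})}=b_1\oplus b_i$ is right, so the induction closes; the injectivity/faithfulness of the realization that you invoke is exactly what the paper assumes when it identifies configuration points with their images. However, your route is genuinely different from the paper's. The paper never uses the axial lines ${\cal L}_C$ at all: it propagates along the cycle itself, observing that once $a_i,a_{i+1}$ lie in the plane so does $c_{\{i,i+1\}}=a_i\oplus a_{i+1}$, and the ${\cal L}_B$-line $\{b_{i+1},b_{i+2},c_{\{i,i+1\}}\}$ then pulls $b_{i+2}$, hence $a_{i+2}=p\oplus b_{i+2}$, into the plane --- two consecutive indices always yield the next one. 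You instead anchor at $b_1$ and walk through the Grassmannian lines $\sub_2(\{1,n,i-1\})$ of the axis to reach $c_{\{n,i-1\}}=b_1\oplus b_i$. Both arguments prove the lemma, but the paper's is slightly shorter and, more significantly, makes no use of the hypothesis ${\goth N}=\GrasSpace(I_n,2)$, so it actually shows that $\perspace(n,\sigma,{\goth N})$ is planar for an $n$-cycle $\sigma$ and an \emph{arbitrary} axis ${\goth N}$, whereas your propagation is tied to the Desarguesian axis. For the statement as given, either suffices.
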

\begin{proof}
  Consider a realization of $\perspace(n,\sigma,{\GrasSpace(I_n,2)})$ in a projective space $\goth P$.
  As it is commonly accepted, we do not distinguish a point of a configuration and its image under
  a realization in question.
  \par
  Let $A$ be the plane of $\goth P$ which contains $p,a_1,a_2$. 
  Then $b_2 = p\oplus a_2$ and $e_{1,2} = a_1\oplus a_2$ are on $A$.
  So, $b_3 = e_{1,2}\oplus b_2 \in A$ and then $a_3 = p \oplus b_3 \in A$. Inductively, we come to $a_i,b_i\in A$
  for all $i\in I_n$, which closes our proof.
\end{proof}
\begin{lem}\label{lem:p2p:plan2}
  Let $\sigma\in S_{I_n}$, 
  $\sigma(i_0) = i_0$, and
  $\sigma(i_1) = i_2 \neq i_1$
  for some $i_0, i_1, i_2 \in I_n$. Set $J:= I\setminus \{ i_0 \}$.
  If ${\goth M} = \perspace(n,\sigma,{\GrasSpace(I_n,2)})$ is embedded via $\gamma$
  into a Desarguesian projective space $\goth P$
  and the image under $\gamma$ of the 
  subconfiguration ${\goth N} = \perspace(n-1,\sigma\restriction{J},{\GrasSpace(J,2)})$ of $\goth M$
  lies on a plane $A$ of $\goth P$
  then the image of $\goth M$ under $\gamma$ lies on $A$.
  \par
  In particular, if $\goth N$ is planar then $\goth M$ is planar as well.
\end{lem}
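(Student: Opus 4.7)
The plan is to exhibit a plane $E$ of $\goth P$ that contains the ``missing'' points $a_{i_0}, b_{i_0}, c_{i_0,i_1}$ and then to show $E=A$. The crucial observation, enabled by the mixed behaviour of $\sigma$, is that the two lines of $\goth M$
\[
  \ell_A := \{a_{i_0},a_{i_1},c_{i_0,i_1}\}, \qquad \ell_B := \{b_{i_0},b_{i_2},c_{i_0,i_1}\}
\]
both pass through $c_{i_0,i_1}$: the second identity uses $\sigma^{-1}(\{i_0,i_2\}) = \{\sigma^{-1}(i_0),\sigma^{-1}(i_2)\} = \{i_0,i_1\}$, which exploits both $\sigma(i_0)=i_0$ and $\sigma(i_1)=i_2$. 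Thus $\ell_A$ and $\ell_B$ are distinct lines sharing a point, and they span a unique plane $E$ in $\goth P$.

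Since $a_{i_0},b_{i_0}\in E$, the line $\LineOn(p,a_{i_0},b_{i_0})$ lies in $E$, so $p\in E$. The indices $i_1, i_2$ both lie in $J$ (from $\sigma(i_1)\neq i_1$ and injectivity of $\sigma$), so $a_{i_1}, b_{i_2}$ are points of $\goth N$, hence of $A$; together with $p\in A$ one obtains $\{p,a_{i_1},b_{i_2}\} \subseteq A \cap E$. These three points are non-collinear in $\goth M$---the only line of $\goth M$ through $p$ and $a_{i_1}$ is $\LineOn(p,a_{i_1},b_{i_1})$, and $b_{i_2}\neq b_{i_1}$---and, since $\gamma$ is a genuine (faithful) embedding, this non-collinearity persists in $\goth P$. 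Hence $A\cap E$ contains three non-collinear points, so $E=A$. In particular, $a_{i_0}, b_{i_0}, c_{i_0,i_1}\in A$.

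To reach every remaining point of $\goth M$, observe that each $c_{i_0,k}$ with $k\in J$ lies on $\LineOn(a_{i_0},a_k)=\{a_{i_0},a_k,c_{i_0,k}\}$, a line both of whose other points are now in $A$; hence $c_{i_0,k}\in A$. This covers $\goth M\setminus\goth N$, giving $\gamma({\goth M})\subseteq A$. The ``in particular'' clause is then immediate: any embedding of $\goth M$ into a Desarguesian projective space restricts to an embedding of $\goth N$, which lies in some plane by planarity of $\goth N$, and the first part forces the whole image of $\goth M$ into that plane. The main subtlety is the persistence of non-collinearity of $p, a_{i_1}, b_{i_2}$ after embedding---this is what one implicitly uses when calling $\gamma$ an embedding rather than merely a line-preserving map.
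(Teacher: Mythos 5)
Your proof is correct and follows essentially the same route as the paper's: both hinge on the observation that the lines $\LineOn(a_{i_0},a_{i_1})$ and $\LineOn(b_{i_0},b_{i_2})$ meet in $c_{i_0,i_1}$ (using $\sigma(i_0)=i_0$ and $\sigma(i_1)=i_2$) and both work inside the same plane spanned by $p,a_{i_0},a_{i_1}$, invoking faithfulness of the embedding to rule out a degeneracy of ${\goth N}$. The only difference is presentational: the paper argues by contradiction (if $a_{i_0}\notin A$ then $p,a_{i_1},b_{i_1},b_{i_2}$ are forced onto the line in which the two planes meet, degenerating ${\goth N}$), whereas you run the argument directly and also spell out the final step, left implicit in the paper, that $b_{i_0}$ and the points $c_{i_0,k}$ then land in $A$ as well.
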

\begin{proof}
  Suppose that $a_{i_0}\notin A$. Then the plane $B$ spanned in $\goth P$ by the points 
  $p, a_{i_0}, a_{i_1}$ is distinct from $A$ and it contains $b_{i_0}$.
  However, the lines $\LineOn(a_{i_0},a_{i_1})$ and  $\LineOn(b_{i_0},b_{i_2})$ intersect in $c_{i_0,i_1}\in B$,
  so $b_{i_2} \in B$. Consequently, $p,a_{i_1},b_{i_1},b_{i_2} \in A,B$ so, they are collinear
  and $\goth N$ degenerate. 
\end{proof}

As an direct consequence of \ref{lem:p2p:plan2} and \ref{lem:p2p:plan1} we obtain.
\begin{lem}\label{lem:p2p:plan3}
  Let   
  $C(\sigma) = (\underbrace{1,\ldots,1}_{(n-k)-\text{times}},k)$, $k \geq 3$.
  Then $\perspace(n,\sigma,{\GrasSpace(I_n,2)})$ is planar.
\end{lem}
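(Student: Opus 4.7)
The plan is to argue by induction on $n$, the base case being $n = k$ where $\sigma$ is a single $k$-cycle.

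For the base case, when $n = k$ the permutation $\sigma$ consists of no fixed points and a single cycle of length $n \geq 3$, so Lemma \ref{lem:p2p:plan1} directly gives that $\perspace(n,\sigma,{\GrasSpace(I_n,2)})$ is planar.

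For the inductive step, assume $n > k$ and suppose the claim has been established for $n-1$. Then $\sigma$ has at least one fixed point; choose such an $i_0 \in I_n$ and set $J := I_n \setminus \{ i_0 \}$. The restriction $\sigma' := \sigma\restriction_{J}$ has cycle type $C(\sigma') = (\underbrace{1,\ldots,1}_{(n-k-1)\text{-times}},k)$, still with $k \geq 3$. By the induction hypothesis, the subconfiguration $\perspace(n-1,\sigma',{\GrasSpace(J,2)})$ of $\perspace(n,\sigma,{\GrasSpace(I_n,2)})$ is planar.

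Now we apply Lemma \ref{lem:p2p:plan2}. We need to exhibit $i_1, i_2 \in I_n$ with $\sigma(i_1) = i_2 \neq i_1$: this is immediate because the $k$-cycle of $\sigma$ (with $k \geq 3$) produces nontrivial orbit elements. Hence the hypotheses of \ref{lem:p2p:plan2} are met, so $\perspace(n,\sigma,{\GrasSpace(I_n,2)})$ is itself planar, completing the induction.

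I do not expect any serious obstacle: both the base case and the inductive step are immediate invocations of the two preceding lemmas, and the arithmetic of the cycle type is trivial. The only point requiring a moment's care is verifying that after deleting a fixed point the hypothesis on $C(\sigma')$ is preserved (it is, since deleting a fixed point only decreases the number of length-$1$ entries by one while leaving the $k$-cycle intact), and that $\sigma$ does have a nontrivial pair $i_1 \neq i_2$ with $\sigma(i_1) = i_2$, which is guaranteed by $k \geq 3$.
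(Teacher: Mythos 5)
Your proof is correct and follows exactly the route the paper intends: the paper states the lemma as "a direct consequence of \ref{lem:p2p:plan2} and \ref{lem:p2p:plan1}" without writing out the induction, and your argument simply makes that induction explicit (base case the $k$-cycle via \ref{lem:p2p:plan1}, inductive step stripping a fixed point via \ref{lem:p2p:plan2}, with the hypotheses $\sigma(i_0)=i_0$ and $\sigma(i_1)=i_2\neq i_1$ verified as you note). No gaps.
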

%

Finally, with the help of the computer program Maple we can decide which of the remaining perspectives
$\perspace(4,\sigma,{\GrasSpace(I_4,2)})$ can be projectively realized. These cases are 
$C(\sigma) = (1,3)$ and $C(\sigma) = (4)$.

\begin{lem}\label{lem:4analyt}\setcounter{facto}{0}
  Let a system of points $p,a_i,b_i, \; 1\leq i\leq 4$ of the real projective plane $\goth P$ 
  be characterized by the following parametric equations.
  \begin{multline}\label{equ:embed}
  p = [1,0,0], 
  a_1 = [0,0,1], b_1 = [1,0,1], 
  a_2 = [1,\alpha_1,\alpha_2], b_2 = [1,\alpha_1 x,\alpha_2 x],
  \\
  a_3 = [0,1,0], b_3 = [1,1,0],
  a_4 = [1,\beta_1,\beta_2], b_4 = [1,\beta_1 y,\beta_2 y]. 
  \end{multline}
  \begin{sentences}
  \item\label{lem4analyt:casC4}
    If $\sigma = (1,2,3,4)$ then the above system of points yields in $\goth P$ a configuration isomorphic to
    $\perspace(4,\sigma,{\GrasSpace(I_4,2)})$ iff
    \begin{multline}\label{equ:embed1}
      \beta_1 = - \frac{-1 + \beta_2 y}{y},
      \alpha_1 = - \frac{1 - 2 \beta_2 y + \beta_2^2 y^2}{x y (\beta_2 -1)}, \\
      \alpha_2 = \frac{\beta_2 y (\beta_2^2 y^2 - 2 \beta_2 y + 1 - x y + x y \beta_2)}{%
      (\beta_2^2 y^2 - \beta_2 y - y + 1)}
    \end{multline}
    and a (terribly long) equation which assures that $c_{1,2}, c_{1,3}, c_{1,4}$ are not collinear holds.
    \begin{facto}{\ref{lem:4analyt}}\label{lem:pers2proj:C4}
      As an example we can quote that substituting 
      $\beta_2 := 2; \alpha_2 := -1, x := 2, y:= 2$ and taking into account \eqref{equ:embed1} 
      we do obtain a realization of $\perspace(4,\sigma,{\GrasSpace(I_4,2)})$.
      Consequently, \begin{quotation} $\perspace(4,{(1,2,3,4)},{\GrasSpace(I_4,2)})$ can be realized in the real
      projective plane. \end{quotation}
    \end{facto}
  \item\label{lem4analyt:casC3iC1}
    If $\sigma = (1,2,3)(4)$ then the above system of points yields in $\goth P$ a configuration isomorphic to
    $\perspace(4,\sigma,{\GrasSpace(I_4,2)})$ iff
    \begin{equation}\label{equ:embed2:1}
      \alpha_1 = - \frac{-1 + \alpha_2 x}{x},
    \end{equation}
    which guarantees that the points $p,a_i,b_i$, $i\leq 3$ yield the fez configuration
    $\perspace(3,{(1,2,3),\GrasSpace(I_3,2)})$, and
    \begin{equation}\label{equ:embed2:2}
      \alpha_2 = - \frac{\beta_2 (-1 + x)}{x \beta_1}, \;
      x = \frac{\beta_2^2 - \beta_2 \beta_1 + \beta_1^2}{\beta_2^2},
      \beta_1 \neq - \frac{\beta_2 y -1}{y}.
    \end{equation}
    The last relation in \eqref{equ:embed2:2} assures that $c_{1,2}, c_{1,3}, c_{1,4}$ are not collinear.
    \begin{facto}{\ref{lem:4analyt}}\label{lem:pers2proj:C3iC1}
      Substituting, concretely, 
      $\beta_1 := 5$, $\beta_2 := 2$, $y := 2$ and using \eqref{equ:embed2:1}, \eqref{equ:embed2:2}
      we arive to an example of concrete realization of
      $\perspace(4,\sigma,{\GrasSpace(I_4,2)})$. 
      Consequently \begin{quotation} $\perspace(4,{(1,2,3)(4)},{\GrasSpace(I_4,2)})$ can be realized in the real
      projective plane.  \end{quotation}
    \end{facto}
  \end{sentences}
\end{lem}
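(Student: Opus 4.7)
The plan is to carry out a direct coordinate computation in $\goth P = \mathbb{P}^2(\mathbb{R})$, exploiting the projective frame freedom to the maximum, and then to let Maple eliminate the resulting polynomial system. By the fundamental theorem of projective geometry we may normalize $p=[1,0,0]$, $a_1=[0,0,1]$, $a_3=[0,1,0]$, and a further scalar. Because $b_i$ lies on $\LineOn(p,a_i)$ for each $i$, writing $a_i = [1,\alpha_1^{(i)},\alpha_2^{(i)}]$ forces $b_i = [1,\alpha_1^{(i)}t_i,\alpha_2^{(i)}t_i]$ for some scalar $t_i$; normalizing $b_1=[1,0,1]$, $b_3=[1,1,0]$ uses up the remaining frame freedom and leaves the six parameters $\alpha_1,\alpha_2,x$ (for $a_2,b_2$) and $\beta_1,\beta_2,y$ (for $a_4,b_4$) displayed in \eqref{equ:embed}.

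With the points $p,a_i,b_i$ parametrized, the axial points are forced. For each $\{i,j\}\in\sub_2(I_4)$ the point $c_{i,j}$ must lie on both $\LineOn(a_i,a_j)$ and $\LineOn(b_{\sigma(i)},b_{\sigma(j)})$, and hence is computed (up to scale) as the cross-product of the coefficient vectors of these two joining lines. The remaining constraints are precisely the four axial collinearities of $\GrasSpace(I_4,2)$: for each $\{i,j,k\}\in\sub_3(I_4)$ the points $c_{i,j},c_{i,k},c_{j,k}$ must be collinear, i.e.\ $\det[c_{i,j};c_{i,k};c_{j,k}]=0$. Together with the non-degeneracy requirement (pairwise distinct points, no accidental collinearities among the $a_i,b_i$), this yields a polynomial system in the six parameters above.

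For part \eqref{lem4analyt:casC4}, the system is solved with Maple by ordering the variables so that $\beta_1,\alpha_1,\alpha_2$ are eliminated in turn. After discarding the branches where some $c_{i,j}$ coincides with another point or some $\LineOn(a_i,a_j) = \LineOn(b_{\sigma(i)},b_{\sigma(j)})$, one is left exactly with the system \eqref{equ:embed1}, together with an additional non-vanishing polynomial which expresses $c_{1,2},c_{1,3},c_{1,4}\notin\wspolin$. For part \eqref{lem4analyt:casC3iC1} I would first restrict to $p,a_i,b_i$ with $i\leq 3$: these must realize the fez configuration $\perspace(3,(1,2,3),\GrasSpace(I_3,2))$, whose coordinate description is well known and forces \eqref{equ:embed2:1}. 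Substituting this relation and then demanding the three axial collinearities that involve index $4$ gives, after elimination, the two equations of \eqref{equ:embed2:2} plus the displayed non-vanishing condition.

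The subsidiary facts \ref{lem:pers2proj:C4} and \ref{lem:pers2proj:C3iC1} then follow by the indicated numerical substitutions: one plugs in the chosen values, uses the iff-formulas just established to recover the remaining parameters, and verifies mechanically that the resulting fifteen points are pairwise distinct and realize all twenty lines. The genuine obstacle is not conceptual but combinatorial in size: the determinant equations are unwieldy and the full elimination ideal has several spurious components (corresponding to degenerate incidences). The work is to order variables and factor intermediate resultants so that the non-degenerate branch separates cleanly from the rest, producing the compact forms \eqref{equ:embed1} and \eqref{equ:embed2:1}--\eqref{equ:embed2:2}.
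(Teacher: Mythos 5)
Your proposal is correct and is essentially the paper's own argument: the paper establishes this lemma only by the Maple elimination you describe, after invoking the homogeneity of the Desarguesian plane to reduce an arbitrary point-perspective system with centre $p$ to the normal form \eqref{equ:embed} (cf.\ the Note following the lemma). The constraints you impose --- each $c_{i,j}$ as the meet of $\LineOn(a_i,a_j)$ and $\LineOn(b_{\sigma(i)},b_{\sigma(j)})$, the four collinear tops of $\GrasSpace(I_4,2)$ as determinant equations, and the non-degeneracy conditions such as $c_{1,2},c_{1,3},c_{1,4}$ not collinear --- are exactly those whose elimination produces \eqref{equ:embed1} and \eqref{equ:embed2:1}--\eqref{equ:embed2:2}.
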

\begin{note}
  Due to the homogeneity of desarguesian planes the system \eqref{equ:embed} characterizes, in fact,
  an {\em arbitrary} system of points $a_i,b_i, \; i\leq 4$ that are point-perspective
  with the centre $p$.
\end{note}

\begin{figure}
\begin{center}
\includegraphics[scale=0.4]{persC4.eps}\quad
\includegraphics[scale=0.44]{persC3iC1.eps}
\end{center}
\caption{The structures $\perspace(4,{(1,2,3,4)},{\GrasSpace(I_4,2)})$ (left) %
and $\perspace(4,{(1,2,3)(4)},{\GrasSpace(I_4,2)})$ (right), %
see \ref{lem:4analyt}.\ref{lem:pers2proj:C4} \space
and  \ref{lem:4analyt}.\ref{lem:pers2proj:C3iC1}. 
Schemas!: lines are dawn here as curved segments.}
\label{fig:persC4}
\end{figure}

As a somewhat tricky generalization of \ref{pers2proj:L4} let us note the following
\begin{fact}
  Let $C(\sigma)$ contain the sequence $(1,1,3)$ as its subsequence for a permutation $\sigma\in S_n$.
  Then ${\goth M} = \perspace(n,\sigma,{\GrasSpace(I_n,2)})$ cannot be realized in
  any Desarguesian projective space.
\end{fact}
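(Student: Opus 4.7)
The strategy is to reduce to a minimal case and then apply the converse of Desargues' theorem; no analytic computation is needed.

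First, applying Lemma \ref{lem:ciach1}, we may choose a $5$-element subset $J \subset I_n$ consisting of two fixed points of $\sigma$ together with one $3$-cycle of $\sigma$; then $\sigma_0 := \sigma\restriction_J$ has cycle type $(1,1,3)$, and $\perspace(5, \sigma_0, \GrasSpace(J, 2))$ is a subconfiguration of $\perspace(n, \sigma, \GrasSpace(I_n, 2))$. So it suffices to show that this $5$-point structure is not realizable in any Desarguesian projective space. After relabeling, I may assume $J = I_5$ and $\sigma_0 = (1)(2)(3,4,5)$.

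Suppose a realization in a Desarguesian projective space $\goth P$ exists, and consider the two triangles $T_1 := (a_1, a_2, a_3)$ and $T_2 := (b_1, b_2, b_4)$. The three pairs of corresponding sides meet, according to $\sigma_0$, in the configuration points $c_{1,2}, c_{2,3}, c_{1,3}$ (since $\sigma_0(1)=1$, $\sigma_0(2)=2$, and $\sigma_0(3)=4$). These three points are collinear, because $\{c_{1,2}, c_{1,3}, c_{2,3}\}$ is a line of the axis $\GrasSpace(I_5, 2)$ corresponding to the triple $\{1,2,3\}$. By the converse of Desargues' theorem in $\goth P$, the triangles $T_1, T_2$ are perspective from some point $q$. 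But $q \in \LineOn(a_1, b_1) \cap \LineOn(a_2, b_2)$; these are the configuration lines $\{p, a_1, b_1\}$ and $\{p, a_2, b_2\}$, which both pass through $p$ and are distinct (as $K_{A^\ast}$ is freely contained in $\goth M$), so $q = p$. Perspectivity from $p$ then forces $p \in \LineOn(a_3, b_4)$, and combining with $p \in \LineOn(a_3, b_3)$ we conclude $b_4 \in \LineOn(p, a_3) \cap \LineOn(p, a_4) = \{p\}$, so $b_4 = p$, a contradiction.

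The main obstacle is really just spotting the right pair of triangles: pairing the two fixed points of $\sigma_0$ with a single index from the $3$-cycle produces the controlled shift $\sigma_0(3) = 4 \neq 3$ on the $b$-side, which allows us to close a Desargues configuration combinatorially while still pinning the perspective center to $p$. All the non-degeneracy conditions needed for the converse of Desargues (non-collinear triangle vertices, distinct perspective rays, distinct intersection points of corresponding sides) follow automatically from the fact that $K_{A^\ast}$ and $K_{B^\ast}$ are freely contained in $\goth M$ and from $\goth M$ being a partial linear space, so no further work is required.
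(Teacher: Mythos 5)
Your proof is correct, but it takes a genuinely different route from the paper's. The paper first forces the realization of $\perspace(5,{(1,2,3)(4)(5)},{\GrasSpace(I_5,2)})$ onto a plane via the planarity lemma \ref{lem:p2p:plan3}, then places the points in the coordinate frame \eqref{equ:embed} and applies the Maple-derived constraints of \ref{lem:4analyt} twice (to the index sets $\{1,2,3,4\}$ and $\{1,2,3,5\}$), concluding $\beta_2/\beta_1=\delta_2/\delta_1$, i.e.\ that $p,a_4,a_5$ are collinear --- a contradiction. You instead argue synthetically: the two fixed points together with one index of the $3$-cycle give triangles $a_1a_2a_3$ and $b_1b_2b_4$ whose corresponding sides meet in $c_{1,2},c_{1,3},c_{2,3}$ (a line of the axis), so the converse of Desargues pins the perspective centre to $p$ and forces $b_4$ onto the two distinct rays $\LineOn(p,a_3)$ and $\LineOn(p,a_4)$, whence $b_4=p$. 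This buys a proof with no coordinates, no computer algebra, and no appeal to the planarity lemmas; it even uses only the indices $1,2,3,4$ of the $\sigma$-invariant set $J$ chosen via \ref{lem:ciach1} (the fifth index is needed only so that $J$ is closed under $\sigma$). Two points you pass over quickly, neither a real gap: the converse of Desargues must also cover non-coplanar triangles, where it holds because the three pairwise-intersecting lines $\LineOn(a_1,b_1)$, $\LineOn(a_2,b_2)$, $\LineOn(a_3,b_4)$ are either concurrent or coplanar and coplanarity would collapse both triangles into one plane; and one should state that a realization is understood to be injective and to map non-collinear triples to non-collinear triples (as the paper assumes implicitly, e.g.\ in the proof of \ref{lem:p2p:plan2}), since every non-degeneracy condition you invoke reduces to exactly that.
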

\begin{proof}
  Let $\sigma = (1,2,3)(4)(5)$. 
  Suppose that ${\goth N} = \perspace(5,\sigma,{\GrasSpace(I_5,2)})$ can be realized in a Desarguesian 
  projective space; from \ref{lem:p2p:plan3}, $\goth N$ is realizable on a Desarguesian plane $A$.
  Without loss of generality we can assume that the points $p,a_i,b_i$ are defined by the system
  \eqref{equ:embed}, and 
  $a_5 = [1,\delta_1,\delta_2]$, $b_5 = [1,\delta_1 z, \delta_2 z]$.
  Since both systems of points: $p,a_1,a_2,a_3,a_4$ and $p,a_1,a_2,a_3,a_5$ yield 
  on $A$  
  (together with the respective $b_i$) 
  subconfigurations of $\goth N$ isomorphic to $\perspace(4,{(1,2,3)(4)},{\GrasSpace(I_4,2)})$,
  from \ref{lem:4analyt} we infer 
  $\frac{\beta_2}{\beta_1} = -\frac{x \alpha_2}{1 + x} = \frac{\delta_2}{\delta_1}$,
  which yields that $p,a_4,a_5$ are collinear, and this is impossible.
  Now the claim is evident, as $\goth M$ contains $\goth N$.
\end{proof}


\section{A few configurational properties: %
an analogue of the Desargues Axiom}\label{ssec:axioms}

Other group of problems which are commonly related to configurations similar to the Desargues
configuration are so called configurational axioms.
Let us briefly quote a formulation of the Desargues Axiom in the form which is suitable for our
purposes here:
\begin{quotation}\em
  Let $\cal A$ be a family of $10$ points in a (Desarguesian)
  projective space $\goth P$ such that after an identification $\gamma$
  of the points in $\cal A$ and the points of $\GrasSpace(I_5,2)$ $\gamma$ maps $9$ of the collinear triples
  of $\GrasSpace(I_5,2)$ onto triples collinear in $\goth P$ and no noncollinear triple is mapped onto a collinear one.
  Then the last, remaining, collinear triple in $\GrasSpace(I_5,2)$ is mapped by $\gamma$ onto a 
  collinear one. \hfill\desAx
\end{quotation}
We say that the Desargues configuration {\em closes} in Desarguesian projective spaces.
Clearly, such an elegant formulation of the Desargues axiom is possible because of the symmetries of the 
Desargues configuration. Analogous statement (with `$10$' and `$9$' replaced by suitable values `$\binom{n}{2}$'
and `$\binom{n}{3}-1$' is valid for generalized Desargues configuration $\GrasSpace(n,2)$ 
(comp. \cite[Prop. 1.9]{perspect}).
Nevertheless, one can prove that, in a sense, every (not too small) skew perspective associated with a permutation of
indices closes in every Desarguesian space.
\par
Let us begin with an evident observation.
\begin{lem}\label{lem:gras-krycie}
  Let $\sigma\in S_{I_n}$. Then (in the notation of \ref{def:pers}) each of two sets $A \cup C$ and $B \cup C$
  yields in $\perspace(n,\sigma,{\GrasSpace(I_n,2)})$ a subconfiguration that is a generalized Desargues configuration 
  of the type $\GrasSpace({n+1},2)$.
\end{lem}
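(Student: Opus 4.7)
The plan is to exhibit explicit identifications of $A\cup C$ and $B\cup C$ with the point set $\sub_2(I_{n+1})$ of $\GrasSpace(I_{n+1},2)$, where $I_{n+1}:=I_n\cup\{\ast\}$ for a fresh symbol $\ast$. First observe that among the four families of lines of $\perspace(n,\sigma,{\GrasSpace(I_n,2)})$, the ones contained in $A\cup C$ are exactly ${\cal L}_A\cup{\cal L}_C$ (lines in ${\cal L}_p$ contain $p$, and those in ${\cal L}_B$ contain some $b_i$), and dually the ones contained in $B\cup C$ are exactly ${\cal L}_B\cup{\cal L}_C$. The cardinalities already match the parameters of $\GrasSpace(n+1,2)$ via Pascal's rule: $n+\binom{n}{2}=\binom{n+1}{2}$ and $\binom{n}{2}+\binom{n}{3}=\binom{n+1}{3}$.

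For $A\cup C$ the identification is the canonical one: set $a_i\mapsto\{i,\ast\}$ and $c_u\mapsto u$. A line $\{a_i,a_j,c_{\{i,j\}}\}\in{\cal L}_A$ is sent to $\{\{i,\ast\},\{j,\ast\},\{i,j\}\}$, the line of $\GrasSpace(I_{n+1},2)$ indexed by $\{i,j,\ast\}\in\sub_3(I_{n+1})$. A line $\{c_{\{i,j\}},c_{\{i,k\}},c_{\{j,k\}}\}\in{\cal L}_C$ (it has this form because $\goth N=\GrasSpace(I_n,2)$) goes to the line indexed by $\{i,j,k\}$. The images of ${\cal L}_A$ and ${\cal L}_C$ partition the lines of $\GrasSpace(I_{n+1},2)$ into those meeting $\ast$ and those not meeting $\ast$, so the identification is an isomorphism of configurations.

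For $B\cup C$ a small twist is required, since the ${\cal L}_B$-line through $b_i$ and $b_j$ passes through $c_{\overline{\sigma}^{-1}(\{i,j\})}$ rather than through $c_{\{i,j\}}$. The key observation is that, because $\sigma\in S_{I_n}$, the induced map $\overline{\sigma}$ is an automorphism of $\GrasSpace(I_n,2)$: it carries a triangular triple $\{\{i,j\},\{i,k\},\{j,k\}\}$ to $\{\{\sigma(i),\sigma(j)\},\{\sigma(i),\sigma(k)\},\{\sigma(j),\sigma(k)\}\}$, again a line of the Grassmannian. One therefore takes $b_i\mapsto\{i,\ast\}$ and $c_u\mapsto\overline{\sigma}(u)$: the ${\cal L}_B$-line $\{b_i,b_j,c_{\overline{\sigma}^{-1}(\{i,j\})}\}$ is sent to $\{\{i,\ast\},\{j,\ast\},\{i,j\}\}$, while each ${\cal L}_C$-line is sent by $\overline{\sigma}$ to another line of $\GrasSpace(I_{n+1},2)$. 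The only point requiring any attention is precisely this choice to twist the labelling of $C$ by $\overline{\sigma}$; everything else is counting and immediate verification.
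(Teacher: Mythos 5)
Your proof is correct. The paper offers no argument at all for this lemma (it is introduced as ``an evident observation''), and your explicit relabellings --- in particular the one non-obvious point, twisting the labels of $C$ by $\overline{\sigma}$ so that the ${\cal L}_B$-lines land on the lines $\sub_2(\{i,j,\ast\})$ while ${\cal L}_C$ is carried onto the $\ast$-free lines by the automorphism $\overline{\sigma}$ of $\GrasSpace(I_n,2)$ --- are exactly the verification the authors leave implicit.
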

From this we easily obtain the following form of ``configurational closeness'' of skew perspectives.
\begin{thm}\label{thm:pers-des}
  Let $\cal D$ be a set of $\binom{n+2}{2}$ points of a Desarguesian projective space $\goth P$ and let $\gamma$ be a
  bijection of $\cal D$ and
  the points of ${\goth M} = \perspace(n,\sigma,{\GrasSpace(I_n,2)})$, $\sigma\in S_{I_n}$, $n \geq 4$.
  Assume that
  \begin{sentences}\itemsep-2pt
  \item\label{ax:pers:1}
    $\gamma$ maps collinear triples of the form $p,a_i,b_i$ onto triples collinear in $\goth P$,
  \item\label{ax:pers:2}
    $gamma$ maps $\binom{n+2}{3} - n -1$ of the remaining collinear triples of $\goth M$ onto triples
    collinear in $\goth P$, and
  \item
    no noncollinear triple of points of $\goth M$ is mapped onto a collinear one.
  \end{sentences}
  Then the last triple of collinear points of $\goth M$ 
  (recall: $\goth M$ has $\binom{n+2}{3}$ triples of collinear points) 
  is mapped by $\gamma$ onto a collinear triple.
\end{thm}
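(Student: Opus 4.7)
The plan is to reduce the statement to the closing property of generalized Desargues configurations in Desarguesian projective spaces, which is the hypothesis \desAx\ (and its generalization, cf.\ the remark after \desAx\ and \cite[Prop.~1.9]{perspect}) applied inside the two substructures produced by Lemma \ref{lem:gras-krycie}.

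First I would locate the unique ``unverified'' line $\tau_0$ of $\goth M$, i.e., the one remaining collinear triple whose $\gamma$-image has not yet been shown to be collinear. By hypothesis \eqref{ax:pers:1}, $\tau_0$ is not a perspective ray $\{p,a_i,b_i\}\in{\cal L}_p$. Since every line of $\goth M$ other than those in ${\cal L}_p$ belongs to ${\cal L}_A\cup{\cal L}_B\cup{\cal L}_C$, and since lines of ${\cal L}_A\cup{\cal L}_C$ are entirely contained in $A\cup C$ while lines of ${\cal L}_B\cup{\cal L}_C$ are entirely contained in $B\cup C$, the triple $\tau_0$ lies inside at least one of the two sets $A\cup C$ and $B\cup C$. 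Without loss of generality, assume $\tau_0\subset A\cup C$; the argument in the opposite case is identical by the symmetry $A\leftrightarrow B$.

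Next, by Lemma \ref{lem:gras-krycie} the subconfiguration of $\goth M$ supported on $A\cup C$ is a copy of the generalized Desargues configuration $\GrasSpace(n+1,2)$, containing exactly $\binom{n+1}{3}=\binom{n}{2}+\binom{n}{3}$ lines, namely the $\binom{n}{2}$ triples $\{a_i,a_j,c_{i,j}\}$ together with the $\binom{n}{3}$ lines of ${\cal L}_C$. All of these lines except $\tau_0$ are, by hypothesis \eqref{ax:pers:2}, mapped by $\gamma$ onto collinear triples of $\goth P$, and by the third assumption no noncollinear triple of points of $A\cup C$ is mapped onto a collinear triple. Because $n\geq 4$ we have $n+1\geq 5$, so the generalized Desargues axiom closure in the Desarguesian space $\goth P$ applies to the image $\gamma[A\cup C]$ and forces $\gamma(\tau_0)$ to be collinear.

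The only real obstacle is the combinatorial bookkeeping in the first step: one must check that the exceptional triple $\tau_0$ really is contained in one of the two generalized Desargues substructures exhibited by Lemma \ref{lem:gras-krycie}. Once that is done, the hypothesis that no noncollinear triple is mapped to a collinear one transfers automatically to the subconfiguration, and the known closing property of $\GrasSpace(n+1,2)$ finishes the proof without further computation.
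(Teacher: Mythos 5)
Your argument is correct and is essentially the paper's own proof: both identify the single unverified line, observe that it must lie in $A\cup C$ or $B\cup C$, invoke Lemma \ref{lem:gras-krycie} to see that this set carries a generalized Desargues configuration $\GrasSpace(n+1,2)$ with $n+1\geq 5$, and conclude by the known closing property of such configurations in Desarguesian spaces. Your version merely spells out the combinatorial bookkeeping (the count $\binom{n+1}{3}=\binom{n}{2}+\binom{n}{3}$ and the case split) a little more explicitly than the paper does.
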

\begin{proof}
  From assumptions, this `last' triple $L$ of collinear points has one of the following forms:
  \begin{equation}
    L = \{ a',a'',c \}, \quad  L = \{ b',b'',c \} \quad 
    \text{ or }\quad L = \{ c,c',c'' \}
  \end{equation}
  for $a',a'' \in A$, $b',b'' \in B$, $c,c'c''\in C$.
  In any case, by \ref{lem:gras-krycie} $L$ is contained in a 
  generalized Desargues configuration ${\cal G} \cong \GrasSpace(m,2)$ with $m\geq 5$, 
  contained in $\goth M$.
  From assumptions, $\gamma$ maps all the colinear triples of $\cal G$ except possibly $L$ onto collinear triples.
  So, $\gamma(L)$ is collinear as well.
\end{proof}
\begin{rem}
\begin{sentences}
\item
  {\em One cannot formulate \ref{thm:pers-des} as a full analogue of {\desAx}. 
  Namely, the conditions \ref{thm:pers-des}\eqref{ax:pers:1} must be placed in the assumptions.}
  Indeed,
  there is an embedding $\gamma$ of ${\goth M} = \perspace(4,{(1,2,3)(4)},{\GrasSpace(I_4,2)})$ into a real projective
  plane so as all the collinear triples of $\goth M$ except the triple $L = \{p, a_4,b_4  \}$ are mapped into 
  collinear triples, but $\gamma(L)$ is not collinear.
  Even a more impressive is the fact (a folklore, in fact), that there is an embedding of the fez configuration
  which preserves all the collinearities except $\{ p,a_3,b_3 \}$. 
\item
  It is a folklore, again, that {\em \ref{thm:pers-des} is not valid for $n=3$}; consider equation \eqref{equ:embed2:1}
  in \ref{lem:4analyt}, which is not a tautology on the real plane.\qed
\end{sentences}
\end{rem}

\else
\bgroup

\title[$\konftyp(15,4,20,3)$-perspectives]{%
On a class of $\konftyp(15,4,20,3)$-configurations reflecting abstract properties of a %
perspective between tetrahedrons}
\author{M. Pra{\.z}mowska, K. Pra{\.z}mowski}

\maketitle 

\begin{abstract}
  In the paper we give a complete classification of schemes of abstract perspectives between two tetrahedrons
  such that intersecting edges corespond under this perspective to intersecting edges.
\end{abstract}

\section{Introduction}

The story begins with respectable Desargues Configuration and its generalizations. It is a common
opinion that the Desargues Configuration is/can be viewed as a schema of the perspective 
between two triangles in a projective space. In fact, there are three such schemes possible!
\par
In \cite{maszko} we have introduced a class of configurations representing schemas of generalized 
perspective between complete graphs, called {\em skew perspectives}.
Roughly speaking, such a perspective establishes a one-to-one correspondence
$\pi$ between the points of the graphs in question and a one-to-one correspondence 
$\xi$ between their edges. 
Some compatibility conditions on $\pi$ and $\xi$ are imposed.
In particular, if we require that $\xi$ preserves edge-intersection the variety of admissible skew 
perspectives seriously decreases (see \cite[Prop. \brak]{maszko}). 
Still, the number of such perspectives is, in general, huge, and no general tool 
suitable to classify them seems available.
\par
In this paper we give a complete classification of the
resulting configurations defined on $15$ points.
These are (generalized) perspectives between tetrahedrons.
Some of them can be also presented as a mutual perspective between three triangles
(defined and classified in \cite{STP3K5}).
Nevertheless, we obtain 62 new \psts s.

\section{Definitions}

Let us formulate the definition of a skew perspective configuration transformed from the original one
given in \cite{maszko} 
to the form suitable for the restricted case of $15$-points configurations.
So, let $I$ be a $4$-element set; in most parts we take 
$I = I_4 = \{ 1,2,3,4 \}$.
We write $\sub_k(I)$ for the set of $k$-subsets of $I$, and $S_I$ for the family of permutations
of $I$. The map 
$\varkappa\colon \sub_2(I) \ni a \longmapsto I\setminus a$ is a bijection of $\sub_2(I)$, 
called its {\em correlation}.
If $\varphi\in S_I$ we write $\overline{\varphi}$ for the natural extension of $\varphi$ to $\sub_2(I)$.
Let $Y\in\sub_3(I)$, we set $\topof(Y) = \sub_2(Y)$; if $i_0\in I$ we write 
$\starof(i_0) = \{ u\in\sub_2(I)\colon i_0\in u \}$ and $\topof(i_0) = \topof(I\setminus\{ i_0 \})$.
A set of the form $\starof(\strut)$ is called {\em a star}, and of the form $\topof(\strut)$ {\em a top}.

Let 
$A = \{ a_i\colon i\in I \}$ and $B = \{ b_i\colon i \in I\}$ be disjoint $4$-element sets, 
let $C = \{ c_u\colon u \in \sub_2(I) \}$ be a $6$-element set disjoint with
$A \cup B$, assume that $p\notin A \cup B \cup C$.
Let ${\goth N} = \struct{C,\lines}$ be a $\konftyp(6,2,4,3)$-configuration i.e.
let $\goth N$ be the Veblen-Pasch configuration labelled by the elements of $\sub_2(I)$.
Finally, let $\delta = \overline{\sigma}$ or
$\delta = \overline{\sigma}\circ\varkappa$ for a $\sigma\in S_{i_4}$.
Then we define the {\em skew perspective (of tetrahedrons)} 
with the {\em skew} $\delta$, the {\em center} $p$, and the {\em axis} $\goth N$
to be the structure
$$  
  \perspace(p,\delta,{\goth N}) \quad:=\quad \struct{\{ p \}\cup A \cup B \cup C, \lines_C\cup \lines_A \cup \lines_B%
  \lines_p},
$$
where
\begin{eqnarray*}
  \lines_A & = & \{ \{ a_i,a_j,c_{ \{i,j\} } \} \colon \{ i,j \}\in \sub_2(I_4) \},
  \\
  \lines_B & = & \{ \{ b_i,b_j,c_{\delta^{-1}(\{ i,j \})} \}\colon \{ i,j \}\in \sub_2(I_4)  \},
  \\
  \lines_p & = & \{  \{ p,a_i,b_i \}\colon i \in I_4.
\end{eqnarray*}
Then $\perspace(p,\delta,{\goth  N})$ is a $\konftyp(15,4,20,3)$-configuration. The term {\em perspective of graphs}
is justified by the following relations.
\begin{enumerate}[(a)]\itemsep-2pt
\item
  The sets $A$ and $B$ are complete $K_4$ graphs (of the collinearity relation).
\item 
  The lines which join the points which corespond each to other, $a_i$ and $b_i$ with $i\in I$, 
  meet in the center: the point $p$.
\item
  The lines which contain corresponding edges, 
  $a_ia_j$ and $b_{i'}b_{j'}$, $\{ i',j' \} = \delta( \{ i,j \} )$ with distinct $i,j\in I$, 
  meet in the axis: the configuration $\goth N$.
\end{enumerate}
Let us write $A^\ast = A \cup \{ p\}$ and $B^\ast = B\cup \{ p \}$.

All the labelings of $\sub_2(I_4)$ which yield a Veblen configuration
are known (cf. \cite[Fct. 2.5, Fig. 2]{klik:VC}).
This paper is not a right place to quote all the definitions that are needed
to define respective labelings. On the other hand, they can be recognized on the figures
\ref{fig:skos1}-\ref{fig:skos3}, namely: observe the lines which join the points $c_{i,j}$.
These are the structures named
\begin{equation}\label{veblist1}
  \GrasSpace(I_4,2) \text{ (Fig. \ref{fig:skos1})},
  \quad \VeblSpace(2) \text{ (Fig. \ref{fig:skos2})},
  \quad {\cal V}_5 \text{( Fig. \ref{fig:skos3})}.
\end{equation}
Moreover, the image of any of the three structures in the list \eqref{veblist1} under the map $\varkappa$ is 
again the Veblen configuration, to be denoted as follows, respectively.
\begin{equation}\label{veblist2}
  \varkappa(\GrasSpace(I_4,2)) = \GrasSpacex(I_4,2),\quad 
  \varkappa(\VeblSpace(2)) = {\cal V}_4, \quad
  \varkappa({\cal V}_5) = {\cal V}_6.
\end{equation}
In what follows we shall also frequently identify subsets of $C$ with
the corresponding subsets of $\sub_2(I)$:
$\topof(Y) = \{ c_u\colon u\in \topof(Y) \}$
and $\starof(i_0) = \{ c_u\colon u\in \starof(i_0) \}$.

\begin{fact}[{\cite[Fct. 2.5]{klik:VC}}]\label{fct:vebetyk}
  For every Veblen configuration ${\goth N}$ defined on $\sub_2(I_4)$ there is a 
  permutation $\alpha\in S_{I_4}$ such that 
  either $\overline{\alpha}$ or $\varkappa\overline{\alpha}$ 
  is an isomorphism of $\goth N$ onto $\goth V$,
  where $\goth V$ is a one 
  among \eqref{veblist1}. 
\end{fact}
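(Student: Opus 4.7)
The plan is to classify all Veblen configurations on $\sub_2(I_4)$ by exploiting a natural trichotomy of their $3$-element lines together with the way the group $\langle\overline{S_{I_4}},\varkappa\rangle$ of order $48$ permutes the types.

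First I would observe that every $3$-subset of $\sub_2(I_4)$, viewed as an edge-set of a subgraph of $K_{I_4}$, belongs to exactly one of three classes: a \emph{star} $\starof(i)$ with $i \in I_4$ (there are four), a \emph{top} $\topof(Y)$ with $Y \in \sub_3(I_4)$ (there are four), or a \emph{Hamilton path} of length three in $K_{I_4}$ (there are twelve), accounting for all $\binom{6}{3}=20$ possible triples. Any $\overline{\alpha}$ preserves each class, while $\varkappa$ interchanges stars with tops and permutes paths among themselves. Hence for a Veblen configuration $\goth N$ the \emph{profile} $(s,t,p)$ counting its star-, top-, and path-lines (with $s+t+p=4$) is $\overline{S_{I_4}}$-invariant, and $\varkappa$ sends it to $(t,s,p)$.

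The core of the proof is a case analysis on this profile. The Veblen axiom forces any two distinct lines to meet in exactly one point, but a star $\starof(i)$ and a top $\topof(Y)$ meet in zero points when $i \notin Y$ and in two points when $i \in Y$, so no Veblen configuration can simultaneously contain a star and a top. A short double-count of edge-coverings then rules out $s=3$ and $t=3$ (three stars, respectively three tops, force the missing line to be the fourth star or top of that kind, not a path), and a finite inspection of how the twelve Hamilton paths in $K_{I_4}$ pairwise share edges rules out $(0,0,4)$. The admissible profiles that remain are $(4,0,0)$, $(0,4,0)$, $(2,0,2)$, $(0,2,2)$, $(1,0,3)$, $(0,1,3)$.

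Within each admissible profile I would show that $\overline{S_{I_4}}$ acts transitively on the configurations of that profile, and count them to get $1,1,6,6,8,8$ respectively, summing to $30 = 6!/|S_4|$, the known total number of labelled Pasch configurations on six points. For the singleton profiles this is trivial. For $(2,0,2)$, writing $\{k,l\}=I_4\setminus\{i,j\}$ for the pair of star-centres $\{i,j\}$, one checks that the two path-lines are forced to be $\{\{k,l\},\{i,k\},\{j,l\}\}$ and $\{\{k,l\},\{i,l\},\{j,k\}\}$, and transitivity follows from transitivity of $S_{I_4}$ on pairs. For $(1,0,3)$ the configuration is parametrised by the star-centre $i$ together with a binary orientation that is swapped by a transposition in $S_{I_4\setminus\{i\}}$, so $\overline{S_{I_4}}$ is again transitive. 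Finally $\varkappa$ pairs profiles $(s,t,p)$ and $(t,s,p)$, so the six $\overline{S_{I_4}}$-orbits collapse to three orbits of $\langle\overline{S_{I_4}},\varkappa\rangle$, with representatives $\GrasSpace(I_4,2)$, $\VeblSpace(2)$, and ${\cal V}_5$. The main obstacles will be the $(0,0,4)$-impossibility and the $(1,0,3)$-transitivity; both reduce to short but slightly delicate finite inspections of Hamilton paths in $K_{I_4}$.
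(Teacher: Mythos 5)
Your argument is correct, but there is nothing in the paper to compare it with: the paper does not prove this statement at all, it imports it verbatim from \cite[Fct. 2.5]{klik:VC}, so your proposal supplies a self-contained proof where the authors only cite. Your route -- the trichotomy of $3$-subsets of $\sub_2(I_4)$ into stars, tops and Hamilton paths ($4+4+12=20$), the observation that a star and a top meet in $0$ or $2$ points and hence never coexist in a Pasch configuration, the exclusion of the profiles $(3,\ast,\ast)$ and $(0,0,4)$, and the orbit count $1+1+6+6+8+8=30=6!/|S_4|$ -- is sound; I checked the forced completions in the $(2,0,2)$ and $(1,0,3)$ cases and the transitivity claims, and they all hold. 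Two remarks. First, you need not treat the $(0,0,4)$-impossibility as a separate delicate inspection: since each of the six admissible profile classes is counted constructively and the counts already exhaust all $30$ labelled Pasch configurations on $\sub_2(I_4)$, the profile $(0,0,4)$ is empty by arithmetic. Second, your count has a genuine payoff beyond the statement itself: the orbit of ${\cal V}_5$ under $\overline{S_{I_4}}$ has size $8$, so its stabilizer has order $24/8=3$ (generated by a $3$-cycle on $I_4\setminus\{3\}$), whereas the paper's Fact \ref{lem:vebauty} asserts that \emph{every} permutation fixing $3$ induces an automorphism of ${\cal V}_5$; a transposition in $S_{I_4\setminus\{3\}}$ in fact swaps the two completions of the unique top $\topof(3)$, so that assertion is too strong, and your approach detects this discrepancy automatically.
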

In other words, the lists \eqref{veblist1} and \eqref{veblist2}
present all (up to a permutation of indices in $I_4$) the possible labellings 
of the Veblen configuration by the elements of $\sub_2(I_4)$.

\def\veblisty{\eqref{veblist1}{\&}\eqref{veblist2}}
In general, the `combinatorial' autmorphisms of the structures in  the lists \veblisty, quoted in 
\ref{fct:vebetyk} are also known.
%
\begin{fact}\label{lem:vebauty}
  Let $\varphi \in S_{I_4}$ and let $\goth V$ be one in list \veblisty.
  Then $\varphi\in\Aut({\goth V})$ iff
  \begin{description}
  \item[{${\goth V} = \GrasSpace(I_4,2)$ or ${\goth V} = \GrasSpacex(I_4,2)$}:]
    $\varphi\in S_{I_4}$ is arbitrary.
  \item[{${\goth V} = \VeblSpace(2)$ or ${\goth V} = {\cal V}_4$}:]
    $\varphi \in S_{I_4}$ fixes two sets $\{1,2\}$ and $\{ 3,4\}$.
  \item[{${\goth V} = {\cal V}_5$ or ${\goth V} = {\cal V}_6$}:]
    $\varphi$ fixes an element $i_0\in I_4$ (e.g. $i_0 = 3$), so, in fact,
    $\varphi \in S_{I_4\setminus\{ 3 \}}$.
  \end{description}
\end{fact}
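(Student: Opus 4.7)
The plan is to handle the six configurations in three pairs, each pair related by the correlation $\varkappa$; since $\varkappa$ is itself a bijection of $\sub_2(I_4)$ that commutes with every map of the form $\overline{\varphi}$ (both are induced by permutations acting on the same set), a line set $\lines({\goth V})$ is preserved by $\overline{\varphi}$ iff its $\varkappa$-image $\lines(\varkappa({\goth V}))$ is. So it suffices to treat one representative from each pair.

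First I would treat $\GrasSpace(I_4,2)$, whose lines are precisely the tops $\topof(Y)$ for $Y\in\sub_3(I_4)$. For any $\varphi\in S_{I_4}$ the identity $\overline{\varphi}(\topof(Y))=\topof(\varphi(Y))$ shows immediately that $\overline{\varphi}$ permutes the tops, so every $\varphi$ yields an automorphism. The case $\GrasSpacex(I_4,2)$ follows from the paragraph above (equivalently, direct: the lines are the stars $\starof(i)$ and $\overline{\varphi}(\starof(i)) = \starof(\varphi(i))$).

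Second I would treat $\goth V = \VeblSpace(2)$. Fix its concrete line list (as in Example \ref{exm:4}) in which two of the four lines are tops, namely $\topof(\{1,2,3\})$ and $\topof(\{1,2,4\})$, while the other two lines contain the point $c_{3,4}$ but are not tops. The key observation is that the pair $\{c_{1,2},c_{3,4}\}$ is distinguished in $\goth V$: $c_{1,2}$ is the unique common point of the two ``top-type'' lines (those whose three points are the edges of a triangle in $K_{I_4}$), and $c_{3,4}$ is the unique common point of the two remaining lines. Any $\overline{\varphi}\in\Aut({\goth V})$ must preserve this pair. Further, $\overline{\varphi}$ must send the pair of tops to itself, and a short check using the fact that the two tops share their common point $c_{1,2}$ rules out the case in which $c_{1,2}$ and $c_{3,4}$ are interchanged. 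This forces $\overline{\varphi}(\{1,2\})=\{1,2\}$ and $\overline{\varphi}(\{3,4\})=\{3,4\}$, i.e., $\varphi$ fixes each of $\{1,2\}$ and $\{3,4\}$ as a set. Conversely, each of the four permutations $\id$, $(1,2)$, $(3,4)$, $(1,2)(3,4)$ can be directly checked to permute the four listed lines.

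Third, for $\goth V = {\cal V}_5$ the argument is analogous: the line list distinguishes one index $i_0\in I_4$, with exactly one of the four lines being a top, namely $\topof(I_4\setminus\{i_0\})$, while the three remaining lines share a common structural role that $i_0$ does not participate in. Since this unique top must be preserved by any $\overline{\varphi}\in\Aut({\goth V})$, we need $\varphi(I_4\setminus\{i_0\}) = I_4\setminus\{i_0\}$, i.e., $\varphi(i_0)=i_0$; and conversely any permutation fixing $i_0$ acts on the remaining three indices and is easily verified to permute the other three lines among themselves. The main obstacle is to isolate the right combinatorial invariant in each labelling (the partition $\{\{1,2\},\{3,4\}\}$ for $\VeblSpace(2)$, the singleton $\{i_0\}$ for ${\cal V}_5$); once these invariants are read off from the explicit line lists inherited from \cite{klik:VC}, both necessity and sufficiency reduce to short case-checks on $\sub_2(I_4)$.
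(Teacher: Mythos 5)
Your proof is correct and follows essentially the same route as the paper's: in each case you identify which lines are tops $\topof(Y)$, use the identity $\overline{\varphi}(\topof(Y))=\topof(\varphi(Y))$ to read off the necessary constraint on $\varphi$ from the set of tops (all four for $\GrasSpace(I_4,2)$, two for $\VeblSpace(2)$, one for ${\cal V}_5$), and verify sufficiency by a direct check, with the second list handled by duality. Your explicit appeal to the commutation $\varkappa\circ\overline{\varphi}=\overline{\varphi}\circ\varkappa$ is a slightly cleaner justification of the paper's ``dual case runs analogously'' remark, and your extra step about the distinguished pair $\{c_{1,2},c_{3,4}\}$ in $\VeblSpace(2)$ is harmless but redundant, since preservation of the set of two tops already forces preservation of their common point.
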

\begin{proof}[A sketch of \proofname]
 Note that each line of $\GrasSpace(I,2)$ has the form $\topof(Y)$ with $Y\in\sub_3(I)$
 and each line of $\GrasSpacex(I,2)$ has the form $\starof(i)$ with $i\in I$.
 In the reasoning below we consider only the structures in the list \eqref{veblist1}.
 The dual case (the list \eqref{veblist2}) runs analogously, with $\topof(\strut)$ replaced by $\starof(\strut)$.
  \begin{enumerate}[(a)]\itemsep-2pt
  \item\label{vebaut:typ1}
    Let ${\goth V} = \GrasSpace(I_4,2)$. Then, clearly, $\overline{\varphi}\in\Aut({\goth V})$
    for each $\varphi\in S_{I_4}$.
  \item\label{vebaut:typ2}
    Let ${\goth V} = \VeblSpace(2)$. It is known that $\goth V$ contains exactly two lines 
    of the form $\topof(Y)$, say these are 
    $\topof(\{ 1,3,4 \}) = \topof(2)$ and $\topof(\{ 2,3,4 \}) = \topof(1)$.
    Then $\varphi$ fixes the set $\{ 1,2 \}$ and $\{ 3,4 \}$. It is seen from the definition 
    of $\VeblSpace(2)$, that a permutation which fixes these two sets is an automorphism
    of $\goth V$.
  \item\label{vebaut:typ3}
    Let ${\goth V} = {\cal V}_5$. We see that $\goth V$ contains exactly one line of the
    form $\topof(Y)$; let it be $\topof(\{ 1,2,4 \}) = \topof(3)$. Consequently, $\varphi(3) = 3$.
    Again, it is seen that each permutation $\varphi\in S_{I_4\setminus\{ 3 \}}$
    extended by the condition $\varphi(3)=3$ determines the automorphism $\overline{\varphi}$ of $\goth V$.
  \end{enumerate}
\end{proof}

Let $\varphi,\psi\in S_{I_4}$,
we write $\varphi \sim \psi$ when $\varphi$ and $\psi$ are conjugate i.e.
when $\varphi = \psi^{\alpha} = \alpha \psi \alpha^{-1}$ for an $\alpha\in S_{I_4}$.
Set
$C(\varphi) = (x_1,\ldots,x_k)$ when $\varphi$ can be decomposed into disjoint cycles
$c_1,\ldots,c_k$ such that $x_i$ is the length of $c_i$, and $x_1\leq\ldots\leq x_k$.
Clearly, $\sum_{i=1}^k x_i = 4$.
It is a folklore that $\varphi \sim \psi$ is equivalent to $C(\varphi) = C(\psi)$.
\par\noindent
If $H\subset S_{I_4}$ then $\varphi \sim_H \psi$ denotes that
$\varphi = \psi^\alpha$ for an $\alpha \in H$.

It is a school exercise to determine, with the help of \ref{lem:vebauty} the following \ref{lem:vebtypy};
this determining may take some time and poor, though.
\begin{lem}\label{lem:vebtypy}
  Let $\goth V$ be a one of \veblisty. The following $\beta$'s are representatives
  of the equivalence classes of 
  $\sim_{\Aut({\goth V})}$ in the group $S_{I_4}$.
  %
  \begin{description}\itemsep-2pt
  \item[{${\goth V} = \GrasSpace(I_4,2)$ or ${\goth V} = \GrasSpacex(I_4,2)$}:]\strut\linebreak
      $\beta = \id,\;\; (1)(2,3,4),\;\; (1,2)(3,4),\;\;
      (1)(2)(3,4),\;\; (1,2,3,4)$. 
  \item[{${\goth V} = \VeblSpace(2)$ or ${\goth V} = {\cal V}_4$}:]
    %
    \begin{description}\itemsep-2pt
      \item[{$C(\beta) = (1,1,1,1)$}{\normalfont, then }] $\beta = \id$;
      \item[{$C(\beta) = (1,1,2)$}{\normalfont, then }] $\beta = (1)(2)(3,4),\;\; (1,2)(3)(4),\;\;(1)(2,3)(4)$;
      \item[{$C(\beta) = (1,3)$}{\normalfont, then }] $\beta = (1)(2,3,4),\;\; (4)(1,2,3)$;
      \item[{$C(\beta) = (2,2)$}{\normalfont, then }] $\beta = (1,2)(3,4),\;\; (1,4)(2,3)$;
      \item[{$C(\beta) = (4)$}{\normalfont, then }] $\beta = (1,2,3,4),\;\; (1,3,2,4)$.
    \end{description}
  \item[{${\goth V} = {\cal V}_5$ or ${\goth V} = {\cal V}_6$}:]
    \begin{description}\itemsep-2pt
      \item[$C(\beta) = (1,1,1,1)${\normalfont, then }] $\beta = \id$;
      \item[$C(\beta) = (1,1,2)${\normalfont, then }] $\beta = (1)(3)(2,4),\;\; (1)(2)(3,4)$;
      \item[$C(\beta) = (1,3)${\normalfont, then }] $\beta = (1)(2,3,4),\;\; (3)(1,2,4)$;
      \item[$C(\beta) = (2,2)${\normalfont, then }] $\beta = (1,2)(3,4)$;
      \item[$C(\beta) = (4)${\normalfont, then }] $\beta = (1,2,3,4)$.
    \end{description}
  \end{description}
\end{lem}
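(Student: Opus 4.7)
The plan is to compute, for each Veblen configuration $\goth V$ listed in \eqref{veblist1} and \eqref{veblist2}, the orbits of the $\sim_{\Aut({\goth V})}$-action on $S_{I_4}$ (which is conjugation by the subgroup $H := \Aut({\goth V}) \leq S_{I_4}$), and then read off one representative from each orbit. Since conjugation preserves cycle type, it suffices to work cycle type by cycle type through the five classes $(1,1,1,1)$, $(1,1,2)$, $(1,3)$, $(2,2)$, $(4)$.

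First, I would invoke \ref{lem:vebauty} to identify $H$ concretely: $H = S_{I_4}$ in the first case; $H = \{\id,(1,2),(3,4),(1,2)(3,4)\}$ (the pointwise-setwise stabilizer of the pair of blocks $\{1,2\},\{3,4\}$) in the second; and $H = S_{\{1,2,4\}}$ (the stabilizer of $3$) in the third. Then I would apply the formula $\sigma (a_1,\ldots,a_k)\sigma^{-1} = (\sigma(a_1),\ldots,\sigma(a_k))$ to enumerate the $H$-orbits. In the first case $H$-orbits are just ordinary conjugacy classes of $S_4$, so the five cycle types give exactly five representatives, which is what the lemma lists.

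In the second case, within each cycle type I would split the $S_4$-conjugacy class into $H$-orbits: e.g.\ among the six transpositions, $(1,2)$ and $(3,4)$ are $H$-fixed singletons while the four ``cross'' transpositions $(1,3),(1,4),(2,3),(2,4)$ form one orbit of size $4$ (conjugation by $(1,2),(3,4),(1,2)(3,4)$ moves $(1,3)$ to $(2,3),(1,4),(2,4)$ respectively); among double transpositions, $(1,2)(3,4)$ is $H$-fixed while $(1,3)(2,4)\leftrightarrow (1,4)(2,3)$ form an orbit; among $3$-cycles, the orbits of $(2,3,4)$ and of $(1,2,3)$ each have size $4$; among $4$-cycles, the orbit of $(1,2,3,4)$ has size $4$ and the orbit of $(1,3,2,4)$ has size $2$. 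In the third case, $H$ acts as the full symmetric group on $\{1,2,4\}$, so orbits are parametrised by how the fixed index $3$ sits in the cycle structure: transpositions split as ``entirely inside $\{1,2,4\}$'' versus ``containing $3$''; the $3$-cycles split analogously; while for $(2,2)$-permutations a single $H$-orbit is obtained (since $H$ transitively permutes the three double transpositions via its action on $\{1,2,4\}$) and similarly all six $4$-cycles form one orbit.

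The only real obstacle is bookkeeping — ensuring no orbit is missed or doubly counted. The clean sanity check is orbit-counting: the sizes of the orbits produced for each cycle type must sum to $1,6,8,3,6$ respectively (the number of $S_4$-permutations of each type). Verifying these totals in each of the three cases confirms that the enumeration is exhaustive and yields precisely the representatives listed in \ref{lem:vebtypy}.
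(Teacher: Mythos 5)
Your proposal is correct and is precisely the computation the paper has in mind: the paper itself offers no written proof, merely remarking that the lemma is "a school exercise" via \ref{lem:vebauty}, and your orbit-by-orbit enumeration of $H$-conjugacy classes (with the orbit-size totals $1,6,8,3,6$ as a completeness check) is that exercise carried out correctly. All the orbit sizes and representatives you list agree with the statement of \ref{lem:vebtypy}.
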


\section{Perspectivities associated with permutations of indices}

In this section we concentrate upon the case when $\delta = \overline{\sigma}$, $\sigma\in S_{I_4}$;
we write, for short, $\delta = \sigma$.
In this case
\begin{ctext}
  $b_{\sigma(i)} \oplus b_{\sigma(j)} = c_{i,j}$ for distinct $i,j\in I_4$.
\end{ctext}
So, let us fix
\begin{equation}\label{typowe:1}
  {\goth M} = \perspace(p,\sigma,{\goth N}), \quad \sigma\in S_{I_4},\; 
  {\goth N} \text{ among listed in \veblisty}.
\end{equation}
It was already remarked that $\goth M$ (freely) contains at least two $K_5$ graphs:
$A^\ast$ and $B^\ast$.
As an important tool to classify our configurations let us quote after \cite[Prop. \brak]{maszko}
the following criterion
\begin{lem}\label{lem:nextgraf0}
  The following conditions are equivalent.
  \begin{sentences}\itemsep-2pt
  \item\label{lem2:cas1}
    $\perspace(p,\sigma,{\goth N})$ freely contains a complete 
    $K_{5}$-graph $G\neq A^\ast, B^\ast$.
  \item\label{lem2:cas2}
    There is $i_0 \in \Fix(\sigma)$ such that 
    $\starof(i_0) = \{ c_u\colon i_0\in u\in\sub_2(I) \}$
    is a collinearity clique in $\goth V$ freely contained in it.
  \end{sentences}
  In case \eqref{lem2:cas2},
  \begin{equation}\label{wzor:extragraf}
    G_{(i_0)} \; := \; \{ a_{i_0,b_{i_0}} \} \cup \starof(i_0)
  \end{equation}
  is a complete graph freely contained in $\perspace(p,\sigma,{\goth V})$.
\end{lem}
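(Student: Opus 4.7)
The plan is to adapt the two-step argument already used for Lemma \ref{lem:nextgraf00} and Lemma \ref{lem:nextgraf}, exploiting the fact that any ``extra'' free $K_5$ is forced, by general position considerations, to have a very restricted shape. I would prove the nontrivial direction \eqref{lem2:cas1}$\Rightarrow$\eqref{lem2:cas2} first, and then verify the reverse implication by a direct check.

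For \eqref{lem2:cas1}$\Rightarrow$\eqref{lem2:cas2}, assume $G\neq A^\ast, B^\ast$ is a free $K_5$ in ${\goth M}=\perspace(p,\sigma,{\goth N})$. First I would invoke \cite[Prop. 2.7]{klik:binom} (in the form already used in the proof of \ref{lem:nextgraf00}) to conclude that $p$, $G\cap A$, $G\cap B$ are collinear, and hence that there is $i_0\in I$ with $a_{i_0},b_{i_0}\in G$ and $G\setminus\{a_{i_0},b_{i_0}\}\subset C$. Since the $C$-points collinear with $a_{i_0}$ are exactly the three elements of $\starof(i_0)$, a cardinality count gives $G=G_{(i_0)}$. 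This already shows that $\starof(i_0)$ is a clique in $\goth N$, freely contained in it (inheritance from the freeness of $G$ in $\goth M$).

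The key additional ingredient, absent from \ref{lem:nextgraf00} but crucial here, is to force $\sigma(i_0)=i_0$. I would obtain this from the requirement that $b_{i_0}$ be collinear with each $c_{i_0,j}\in\starof(i_0)$: using the identities $a_{i_0}\oplus a_j=c_{i_0,j}$ and $b_{\sigma(i_0)}\oplus b_{\sigma(j)}=c_{i_0,j}$, the point $c_{i_0,j}$ lies on a $B$-line $\LineOn(b_{i_0},b_{j'})$ precisely when $\{\sigma(i_0),\sigma(j)\}=\{i_0,j'\}$; as this must hold for every $j\in I\setminus\{i_0\}$, one reads off $\sigma(i_0)=i_0$, i.e. $i_0\in\Fix(\sigma)$.

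For the converse \eqref{lem2:cas2}$\Rightarrow$\eqref{lem2:cas1}, I would simply check that $G_{(i_0)}$ defined by \eqref{wzor:extragraf} is a free $K_5$. The collinearities inside $\starof(i_0)$ come from its being a free clique in $\goth N$; the collinearities of $a_{i_0}$ and $b_{i_0}$ with each $c_{i_0,j}$ come directly from $\cal L_A$ and $\cal L_B$ (using $\sigma(i_0)=i_0$); and the triple $p,a_{i_0},b_{i_0}$ is in ${\cal L}_p$. The only step requiring a moment's care is freeness: one has to verify that no two disjoint edges of $G_{(i_0)}$ lie on a common line of $\goth M$, which reduces to the already-assumed free containment of the star in $\goth N$ together with the fact that, by construction, $A^\ast, B^\ast, G_{(i_0)}$ share only the ``vertical'' line through $p$. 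I expect this bookkeeping to be the main technical nuisance, though not a real obstacle, since the same pattern was handled in \ref{lem:nextgraf00}.
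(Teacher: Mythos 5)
Your proposal is correct and follows essentially the same route as the paper: the paper likewise first pins down the shape $G=G_{(i_0)}=\{a_{i_0},b_{i_0}\}\cup\starof(i_0)$ via the collinearity of $p$, $G\cap A$, $G\cap B$ and a cardinality count, then derives $\sigma(i_0)=i_0$ from the requirement that $b_{i_0}$ be collinear with every $c_{i_0,j}$, exactly as you do, and dismisses the converse as a direct verification. No gaps; the freeness bookkeeping you flag is indeed the only part the paper leaves implicit.
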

On the other hand all the $\konftyp(15,4,20,3)$-configurations which freely contain at least three $K_5$
were classified in \cite{STP3K5}.
So, our classification splits into two ways:
\begin{enumerate}[(A)]
\item
  $\Fix(\sigma)\ni i_0$ and $\starof(i_0)$ is a triangle in $\goth V$ for some $i_0\in I_4$
  \par\indent 
  \strut\quad- then $\goth M$ must be identified among those defined in \cite{STP3K5}. 
\item
  There is no $i_0\in I_4$ such that $\sigma(i_0) = i_0$ and $\starof(i_0)$ is a triangle in $\goth V$
  \par\indent 
  \strut\quad- then each isomorphism type of $\goth M$'s is determined by a 
  conjugacy class of $\sigma$ wrt. $\sim_{\Aut({\goth V})}$
  and we obtain a class of `new' configurations.
\end{enumerate}

To make this reasoning more complete let us quote after \cite[\brak]{maszko} one more result:
\begin{prop}[{\cite[A particular case of \brak]{maszko}}]\label{prop:iso1}
  Let $f$ map the points of $\perspace(p,\sigma_1,{\goth N}_1)$ onto the points of $\perspace(p,\sigma_2,{\goth N}_2)$, 
  $f(p) = p$, $\sigma_1,\sigma_2 \in S_{I_4}$,
  and 
  ${\goth N}_1, {\goth N}_2$ be two Veblen configurations defined
  on $\sub_2(I_4)$.
  The following conditions are equivalent.
  \begin{sentences}\itemsep-2pt
   \item\label{propiso1:war1}
     $f$ is an isomorphism 
     of $\perspace(p,\sigma_1,{\goth N}_1)$ onto $\perspace(p,\sigma_2,{\goth N}_2)$.
   \item\label{propiso1:war2}
     There is $\varphi \in S_{I_4}$ such that
     one of the following holds
     \begin{eqnarray} 
     \label{iso:war1-1}
        \overline{\varphi}  & {\text is } & \text{ an isomorphism of } 
            {\goth N}_1 \text{ onto } {\goth N}_2,
     \\ 
     \label{propiso1:typ1}
       f(x_i) = x_{\varphi(i)},\; x = a,b,\; &&
       f(c_{\{i,j\}}) = c_{\{\varphi(i),\varphi(j)\}},
       \quad i,j\in I, i\neq j,
     \\ \label{iso:war2}
       \varphi \circ \sigma_1 & = & \sigma_2 \circ \varphi,
     \end{eqnarray}
     or
     \begin{eqnarray} 
     \label{iso:war1-2}
        \overline{\sigma_2^{-1}\varphi} & \text{  is } & \text{ an isomorphism of } 
            {\goth N}_1 \text{ onto } {\goth N}_2,
     \\ \nonumber
       f(a_i) = b_{\varphi(i)},\; f(b_i) = a_{\varphi(i)}, &&
        f(c_{\{i,j\}}) = c_{\{\sigma_2^{-1}\varphi(i),\sigma_2^{-1}\varphi(j)\}},
      \\  
     \label{propiso1:typ2}
        && \strut\hspace{28ex}\strut i,j\in I, i\neq j,
     \\ \label{iso:war3}
       \varphi \circ \sigma_1 & = & \sigma_2^{-1} \circ \varphi.
     \end{eqnarray}
  \end{sentences}
\end{prop}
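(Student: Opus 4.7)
The plan is to prove both directions directly, exploiting the structural fact that exactly two free $K_5$-subgraphs of $\perspace(p,\sigma_l,{\goth N}_l)$ pass through $p$, namely $K_{A^\ast}$ and $K_{B^\ast}$ (cf.\ \cite[Prop.'s 2.6, 2.7]{klik:binom}). This dichotomy forces any isomorphism $f$ with $f(p)=p$ either to stabilize each of $A,B$ setwise, or to swap them, and these two alternatives will produce the two cases of \eqref{propiso1:war2}.

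In the stabilizing case, the restriction $f|_A$ is encoded by a permutation $\varphi\in S_{I_4}$ via $f(a_i)=a_{\varphi(i)}$; then $f(b_i)=f(p)\oplus f(a_i)=b_{\varphi(i)}$ and $f(c_{\{i,j\}})=f(a_i\oplus a_j)=c_{\{\varphi(i),\varphi(j)\}}$ follow from $\oplus$-preservation, which is exactly \eqref{propiso1:typ1}. Preservation of $\lines_C$ by $f$ then amounts to $\overline{\varphi}$ being an isomorphism ${\goth N}_1\to{\goth N}_2$, giving \eqref{iso:war1-1}, and evaluating $f(b_i\oplus b_j)$ in source and target via the formula $b_i\oplus b_j=c_{\sigma_l^{-1}(\{i,j\})}$ yields the commutation $\varphi\sigma_1=\sigma_2\varphi$, i.e.\ \eqref{iso:war2}. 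The swap case runs in parallel: setting $f(a_i)=b_{\varphi(i)}$ forces $f(b_i)=a_{\varphi(i)}$ and $f(c_{\{i,j\}})=b_{\varphi(i)}\oplus b_{\varphi(j)}=c_{\{\sigma_2^{-1}\varphi(i),\sigma_2^{-1}\varphi(j)\}}$, and the corresponding axis-isomorphism and commutation clauses \eqref{iso:war1-2} and \eqref{iso:war3} drop out by the same two-way evaluation scheme.

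The converse is a routine check that the prescribed $f$ sends each of the four line families $\lines_p,\lines_A,\lines_B,\lines_C$ into lines of the target: incidence on $\lines_p$, $\lines_A$, and $\lines_C$ is immediate from the defining formulas together with the axis-isomorphism clause, and incidence on $\lines_B$ reduces precisely to the commutation relation. The one step that deserves genuine attention is the swap case, where one must match the index set $\sigma_2^{-1}\{\varphi(i),\varphi(j)\}$ arising from $b_{\varphi(i)}\oplus b_{\varphi(j)}$ against what $\lines_B$-preservation demands, and verify that \eqref{iso:war3} is exactly what enforces self-consistency; this asymmetry with respect to $\sigma_2$ is essentially the only place where the argument is not a verbatim transcript of the stabilizing case. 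The whole proof is a specialization to $S_{I_4}$-permutations of the one carried out in full generality in \cite{maszko}, so one could equally well invoke that work and record the present formulation as an immediate corollary.
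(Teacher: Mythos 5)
Your proposal is correct and follows essentially the same route as the paper's own argument (given for the general version, Prop.~\ref{prop:iso0} of the companion paper, of which this is the special case with $\sigma_l\in S_{I_4}$): the dichotomy $f(A)=A,\,f(B)=B$ versus $f(A)=B,\,f(B)=A$ forced by the two free $K_5$-subgraphs through $p$, the extraction of $\varphi$ from $f|_A$, the derivation of the $c$-formulas and the axis-isomorphism clause from $\oplus$-preservation, and the commutation relations from evaluating $f(b_i\oplus b_j)$ in two ways, with a routine converse verification. No gaps.
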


\par\medskip
In view of \ref{lem:nextgraf0} it will be usefull to know explicitly all the star-triangles in
corresponding labellings of the Veblen configuration.
Again, it is a school exercise to determine the following
\begin{lem}\label{triaINveb}
  The following are the star-triangles in corresponding Veblen configuration $\goth V$.
  \begin{description}\itemsep-2pt
  \item[{${\goth V} = \GrasSpace(I_4,2)$}:] $\starof(i)$, $i\in I_4$; 4 star-triangles.
  \item[{${\goth V} = \VeblSpace(2)$}:] $\starof(1)$, $\starof(2)$; 2 star-triangles.
  \item[{${\goth V} = {\cal V}_5$}:] $\starof(3)$; a unique star-triangle.
  \item[{${\goth V} = \GrasSpacex(I_4,2), {\cal V}_4, {\cal V}_6$}:] 
    no $i\in I_4$ such that $\starof(i)$ is a triangle in $\goth V$.
  \end{description}
\end{lem}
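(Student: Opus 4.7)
The plan is, for each of the six labelled Veblen configurations listed in \veblisty{}, to write down the four lines explicitly and then, for every $i\in I_4$, decide whether the three points of $\starof(i)$ are pairwise collinear in $\goth V$ and, if so, whether they already lie on a common line of $\goth V$. Recall that a star-triangle is a star which forms a triangle in the collinearity graph without being a line itself; free containment is automatic in a Veblen configuration, since any two distinct lines meet in at most one point. The automorphism groups from Lemma \ref{lem:vebauty} reduce the stars to inspect to one or two representatives per configuration, and the correlation $\varkappa$ supplies a further shortcut: because $\varkappa(\starof(i))=\topof(i)$ and $\varkappa$ carries lines of $\goth V$ onto lines of $\varkappa({\goth V})$, the star $\starof(i)$ is a triangle in $\varkappa({\goth V})$ iff the top $\topof(i)$ is pairwise collinear in $\goth V$ but is not itself a line of $\goth V$. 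It therefore suffices to treat the three primary cases $\GrasSpace(I_4,2)$, $\VeblSpace(2)$ and ${\cal V}_5$, and in each of them to inspect both stars and tops.

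In $\GrasSpace(I_4,2)$ every line is a top, so no top is a triangle; any two points $\{i,j\},\{i,k\}$ of $\starof(i)$ lie on the top $\topof(\{i,j,k\})$ and yet no single top contains the whole star, so the $S_{I_4}$-transitivity on stars gives all four $\starof(i)$ as triangles, producing at once the rows for both $\GrasSpace(I_4,2)$ and $\GrasSpacex(I_4,2)$. For $\VeblSpace(2)$, the proof of Lemma \ref{lem:vebauty} identifies $\topof(1)$ and $\topof(2)$ as the only top-lines; the remaining two lines must then each contain the uncovered point $\{1,2\}$, and the two a priori possible pairings of $\{\{1,3\},\{1,4\},\{2,3\},\{2,4\}\}$ other than $\{\{1,3\},\{2,4\}\}\cup\{\{1,4\},\{2,3\}\}$ are eliminated by a two-point intersection with $\topof(2)$ and by the no-extra-top requirement respectively, forcing the remaining lines to be $\{\{1,2\},\{1,3\},\{2,4\}\}$ and $\{\{1,2\},\{1,4\},\{2,3\}\}$. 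A direct inspection with this list then shows that $\starof(1),\starof(2)$ are triangles, while $\starof(3),\starof(4)$ and both non-line tops $\topof(3),\topof(4)$ each contain a non-collinear pair, giving two star-triangles in $\VeblSpace(2)$ and, by duality, none in ${\cal V}_4$. The configuration ${\cal V}_5$ is handled in the same style: $\topof(3)$ is the unique top-line, the three remaining lines are pinned down by the $S_{\{1,2,4\}}$-symmetry fixing $3$ together with the incidence counts as $\{\{1,3\},\{1,4\},\{2,3\}\}$, $\{\{1,2\},\{3,4\},\{2,3\}\}$ and $\{\{1,3\},\{3,4\},\{2,4\}\}$, and then $\starof(3)$ turns out to be the unique star-triangle while every other star and every non-line top contains a non-collinear pair, which together with duality yields the entries for both ${\cal V}_5$ and ${\cal V}_6$.

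The only slightly awkward step, and the main obstacle to the extent there is one, is this explicit determination of the three non-top lines in $\VeblSpace(2)$ and in ${\cal V}_5$: one must rule out the alternative partitions of the uncovered points which would either make two lines share two points or introduce an extra top among the lines. Once the two concrete line-lists are in hand, everything else collapses to a finite mechanical verification.
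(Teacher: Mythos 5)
Your proof is correct, and it is exactly the kind of direct finite verification the paper has in mind: the text offers no proof at all, dismissing the lemma as "a school exercise", so your explicit line-lists for $\VeblSpace(2)$ and ${\cal V}_5$ (which I checked against the labellings implicit in the proof of Lemma \ref{lem:vebauty} and in the figures) supply precisely the omitted computation. The one genuine addition on your side is the observation that $\varkappa(\starof(i))=\topof(i)$ turns the star-triangle question for $\GrasSpacex(I_4,2)$, ${\cal V}_4$, ${\cal V}_6$ into a question about tops of the three primary configurations, which halves the case analysis and is consistent with how the paper treats the list \eqref{veblist2} elsewhere.
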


Finally, we are now in a position to formulate our main theorem classifying  perspectives
determined by permutations of indices.
\begin{thm}\label{glowne1}
  Let ${\goth M} = \perspace(p,\sigma,{\goth N})$, where $\sigma\in S_{I_4}$
  and $\goth N$ is a Veblen configuration defined on $\sub_2(I_4)$.
  Then $\goth M$ is isomorphic to exactly one of the following 
  ${\goth M}_0 = \perspace(p,\sigma_0,{\goth V})$.
  \begin{description}
  \item[{${\goth V} = \GrasSpace(I_4,2)$}:]
    \begin{typcription}
    \typitem{$C(\sigma) = (1,1,1,1)),\;\sigma_0 = \id$} 
      -- ${\goth M}_0 = \GrasSpace(6,2)$, it is a generalized Desargues Configuration 
      (or Cayley-Simson configuration).
    \typitem{$C(\sigma) = (2,2),\;\sigma_0 = (1,2)(3,4)$} 
      -- ${\goth M}_0 = {\goth R}_4$ is a quasi grassmannian of \cite{skewgras},
       cf. \cite[Example \brak]{maszko}.
    \typitem{$C(\sigma) = (1,3),\;\sigma_0 = (1)(2,3,4)$} 
      -- ${\goth M}_0$ has the type 2.8(ii) of \cite{STP3K5}, cf. \cite[Example \brak]{maszko}.
    \typitem{$C(\sigma) = (1,1,2),\;\sigma_0 = \id$}\label{typL4}
      -- ${\goth M}_0$ has the type 2.10(iii) of \cite{STP3K5},
      it is also isomorphic to the multiveblen configuration 
      $\xwlepp({I_4},{p},{L_{4}},{},{\GrasSpace(I_4,2)})$, 
      and to the skew perspective 
      $\perspace(p,\id,{\VeblSpace(2)})$ 
      (in this case the corresponding 
      $\VeblSpace(2)$ has lines $\topof(2)$ and $\topof(3)$), 
      comp. \cite[Example \brak]{maszko}.
    \typitem{$C(\sigma) = (4),\; \sigma_0 = (1,2,3,4)$} -- a new type. 
    \setcounter{ostitem}{\value{typek}} 
    \end{typcription}
  \item[{${\goth V} = \GrasSpacex(I_4,2)$}:]
    \begin{typcription}\setcounter{typek}{\theostitem}
    \typitem{$C(\sigma) = (1,1,1,1),\;\sigma_0 = \id$} -- a new type. 
    \typitem{$C(\sigma) = (2,2),\ \sigma = (1,2)(3,4)$} -- a new type. 
    \typitem{$C(\sigma) = (1,3),\ \sigma = (1)(2,3,4)$} -- a new type. 
    \typitem{$C(\sigma) = (1,1,2),\ \sigma = (1)(2)(3,4)$} -- a new type. 
    \typitem{$C(\sigma) = (4),\ \sigma = (1,2,3,4)$} -- a new type. 
    \setcounter{ostitem}{\value{typek}}
    \end{typcription}
  \item[{${\goth V} = \VeblSpace(2)$}:]
    \begin{typcription}\setcounter{typek}{\theostitem}
   \item $1,2\notin\Fix(\sigma)$
    \typitem{$C(\sigma) = (2,2), \;\sigma_0 = (1,2)(3,4)$} -- a new type.
    \typitem{$C(\sigma) = (2,2), \;\sigma_0 = (1,3)(2,4)$} -- a new type.
    \typitem{$C(\sigma) = (1,3), \;\sigma_0 = (3)(1,2,4)$} -- a new type.
    \typitem{$C(\sigma) = (4), \;\sigma_0 = (1,2,3,4)$} -- a new type.
    \typitem{$C(\sigma) = (4), \;\sigma_0 = (1,3,2,4)$} -- a new type.
   \item $1,2 \in \Fix(\sigma)$
    \typitem{$C(\sigma) = (1,1,2),\; \sigma_0 = (1)(2)(3,4)$}  
      -- ${\goth M}_0$ is the multiveblen configuration $\xwlepp({I_4},{p},{N_{4}},{},{\GrasSpace(I_4,2)})$,
      compare \cite[Exm. \brak]{maszko}, it has the type 2.10(ii) of \cite{STP3K5}.
      \par
      \strut -- Note that $\sigma_0 = id$ gives the structure already defined in \eqref{typL4} above.
   \item $1 \in \Fix(\sigma),\; 2 \notin \Fix(\sigma)$
    \typitem{$\sigma_0 = (1)(2,3,4)$}\label{class:1} 
      -- ${\goth M}_0$ has the type 2.8(xii) of \cite{STP3K5}.
    \typitem{$\sigma_0 = (1)(4)(2,3)$}\label{class:2} 
      -- ${\goth M}_0$ has the type 2.8(xi) of \cite{STP3K5}.
    \setcounter{ostitem}{\value{typek}}
    \end{typcription}
  \item[{${\goth V} = {\cal V}_4$}:]
    \begin{typcription}\setcounter{typek}{\theostitem}
    \typitem{$C(\sigma) = (1,1,1,1), \;\sigma_0 = \id$} -- a new type.
    \typitem{$C(\sigma) = (2,2), \;\sigma_0 = (1,2)(3,4)$} -- a new type.
    \typitem{$C(\sigma) = (2,2), \;\sigma_0 = (1,3)(2,4)$} -- a new type.
    \typitem{$C(\sigma) = (1,3), \;\sigma_0 = (1)(2,3,4)$} -- a new type.
    \typitem{$C(\sigma) = (1,3), \;\sigma_0 = (4)(1,2,3)$} -- a new type.
    \typitem{$C(\sigma) = (1,1,2), \;\sigma_0 = (1)(2)(3,4)$} -- a new type.
    \typitem{$C(\sigma) = (1,1,2), \;\sigma_0 = (1,2)(3)(4)$} -- a new type.
    \typitem{$C(\sigma) = (1,1,2), \;\sigma_0 = (1)(2,3)(4)$} -- a new type.
    \typitem{$C(\sigma) = (4), \;\sigma_0 = (1,2,3,4)$} -- a new type.
    \typitem{$C(\sigma) = (4), \;\sigma_0 = (1,3,2,4)$} -- a new type.
    \setcounter{ostitem}{\value{typek}}
    \end{typcription}
  \item[{${\goth V} = {\cal V}_5$}:]
    \begin{typcription}\setcounter{typek}{\theostitem}
    \item $3 \in \Fix(\sigma)$
    \typitem{$C(\sigma) = (1,1,1,1), \;\sigma_0 = \id$}\label{class:3} 
      -- ${\goth M}_0$ has the type 2.8(v) of \cite{STP3K5}.
    \typitem{$C(\sigma) = (1,3), \; \sigma = (3)(1,2,4)$}\label{class:4} 
      -- ${\goth M}_0$ has the type 2.8(i) of \cite{STP3K5}.
    \typitem{$C(\sigma) = (1,1,2), \;\sigma = (3)(1)(2,4)$}\label{class:5} 
      -- ${\goth M}_0$ has the type 2.8(xiv) of \cite{STP3K5}.
    \item $3 \notin \Fix(\sigma)$
    \typitem{$C(\sigma) = (2,2), \; \sigma = (1,2)(3,4)$} -- a new type.
    \typitem{$C(\sigma) = (1,3), \; \sigma = (1)(2,3,4)$} -- a new type.
    \typitem{$C(\sigma) = (1,1,2), \;\sigma = (1)(2)(3,4)$} -- a new type.
    \typitem{$C(\sigma) = (4), \;\sigma = (1,2,3,4)$} -- a new type.
    \setcounter{ostitem}{\value{typek}}
    \end{typcription}
  \item[{${\goth V} = {\cal V}_6$}:]
    \begin{typcription}\setcounter{typek}{\theostitem}
    \typitem{$C(\sigma) = (1,1,1,1), \;\sigma_0 = \id$} -- a new type.
    \typitem{$C(\sigma) = (2,2), \;\sigma_0 = (1,2)(3,4)$} -- a new type.
    \typitem{$C(\sigma) = (1,3), \;\sigma_0 = (1)(2,3,4)$} -- a new type.
    \typitem{$C(\sigma) = (1,3), \;\sigma_0 = (3)(1,2,4)$} -- a new type.
    \typitem{$C(\sigma) = (1,1,2), \;\sigma_0 = (1)(3)(2,4)$} -- a new type.
    \typitem{$C(\sigma) = (1,1,2), \;\sigma_0 = (1)(2)(3,4)$} -- a new type.
    \typitem{$C(\sigma) = (4), \;\sigma_0 = (1,2,3,4)$} -- a new type.
    \setcounter{ostitem}{\value{typek}}
    \end{typcription}
  \end{description}
\end{thm}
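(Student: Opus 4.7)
The plan is to use Proposition~\ref{prop:iso1} together with Fact~\ref{fct:vebetyk} to reduce the classification to enumerating $\Aut(\goth V)$-conjugacy classes in $S_{I_4}$. By Fact~\ref{fct:vebetyk}, after a permutation of the labels in $I_4$ we may assume that $\goth N$ is one of the six Veblen configurations from the lists~\eqref{veblist1}--\eqref{veblist2}; so it is enough to fix $\goth V$ from that list and classify $\perspace(p,\sigma,\goth V)$ up to isomorphism as $\sigma$ ranges over $S_{I_4}$.

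Specializing Proposition~\ref{prop:iso1} to ${\goth N}_1 = {\goth N}_2 = \goth V$ and invoking Fact~\ref{lem:vebauty}, the first step is to check that $\perspace(p,\sigma_1,\goth V) \cong \perspace(p,\sigma_2,\goth V)$ iff there is $\varphi \in \Aut(\goth V)\subset S_{I_4}$ such that either $\sigma_2 = \varphi\sigma_1\varphi^{-1}$ or $\sigma_2^{-1} = \varphi\sigma_1\varphi^{-1}$. The second alternative comes from \ref{prop:iso1}\eqref{propiso1:war2} by substituting $\varphi' := \sigma_2^{-1}\varphi$ for the conjugating element, which then automatically lies in $\Aut(\goth V)$. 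Consequently the isomorphism classes correspond bijectively to $\Aut(\goth V)$-conjugacy classes in $S_{I_4}$, further collapsed by the relation $\sigma\sim\sigma^{-1}$. Representatives for $\sim_{\Aut(\goth V)}$ are already listed in Lemma~\ref{lem:vebtypy}, and I would next examine, for each non-involution representative, whether $\sigma_0^{-1}$ is $\Aut(\goth V)$-conjugate to $\sigma_0$ or to a distinct listed representative; this only has content when $C(\sigma_0)\in\{(1,3),(4)\}$ and reduces to a handful of direct computations.

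For each surviving representative $\sigma_0$ I will apply Lemma~\ref{lem:nextgraf0} together with Lemma~\ref{triaINveb} to decide whether $\perspace(p,\sigma_0,\goth V)$ admits a third free $K_5$-subgraph beyond $A^\ast$ and $B^\ast$: this happens iff some $i_0\in\Fix(\sigma_0)$ satisfies that $\starof(i_0)$ is a triangle in $\goth V$. By Lemma~\ref{triaINveb}, only the three Veblen configurations $\GrasSpace(I_4,2)$, $\VeblSpace(2)$, ${\cal V}_5$ admit any such $i_0$ at all, and the admissible $i_0$ for each are explicitly known. When no admissible $i_0$ exists, the configuration has exactly two free $K_5$-subgraphs, so it falls outside the classification of \cite[Classif. 2.8]{STP3K5}; I record it as a new type. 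When an admissible $i_0$ exists, at least three free $K_5$-subgraphs are present and the configuration must be matched with one of the $15$ types in \cite[Classif. 2.8]{STP3K5}.

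The main obstacle is precisely this last matching step. For each representative in the many-$K_5$'s regime I need to exhibit $\perspace(p,\sigma_0,\goth V)$ as a system of triangle perspectives in the style developed in \cite{maszko} and read off enough combinatorial invariants (the number of free $K_5$-subgraphs, pairwise incidences of the triangle centres, and the action of $\sigma_0$ on these) to unambiguously identify the \cite[Classif. 2.8]{STP3K5}-type. This is routine but lengthy, and must be carried out case by case; the coincidence recorded in item~\ref{typL4}, namely $\perspace(p,\id,\VeblSpace(2)) \cong \perspace(p,(1)(2)(3,4),\GrasSpace(I_4,2))$, is a typical instance of how two formally distinct $(\goth V,\sigma_0)$-pairs may produce the same isomorphism type, and all such coincidences must be listed. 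Pairwise non-isomorphism of the remaining entries then follows from the $\Aut(\goth V)$-class analysis together with the $K_5$-count invariant: any two rows in the theorem's list either fall into different $\Aut(\goth V)$-classes (modulo inversion) or differ in the number of free $K_5$-subgraphs.
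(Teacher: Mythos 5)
Your proposal is correct and follows essentially the same route as the paper's own proof: reduce via Fact \ref{fct:vebetyk} to the six standard labellings, derive from Proposition \ref{prop:iso1} and Fact \ref{lem:vebauty} that isomorphism amounts to $\Aut({\goth V})$-conjugacy of the skews up to inversion, take the representatives from Lemma \ref{lem:vebtypy}, separate the cases with a third free $K_5$ using Lemma \ref{lem:nextgraf0} and Lemma \ref{triaINveb}, and identify those cases among the types of \cite{STP3K5} by exhibiting them as systems of triangle perspectives. The only work you defer (the line-diagram matching of the five cases with an extra $K_5$, and the check that each representative is $\Aut({\goth V})$-conjugate to its inverse) is exactly the content the paper itself supplies case by case in its proof.
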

%
%
\begin{proof}
  In view of \ref{lem:nextgraf0}, \ref{lem:vebauty}, \ref{lem:vebtypy}, and \ref{triaINveb} it only remains to
  justify that in the cases when $i\in\Fix(\sigma_0)$ and $\starof(i)$ is a triangle of $\goth V$
  the obtained ${\goth M}_0$ is of the appropriate type of \cite[Prop.'s 2.8, 2.10]{STP3K5}.
  Without coming into details we recall that a figure called 
  `the diagram of the line (which joins intersecting points of the three pairwise perspective 
  triangles in ${\goth M}_0$' and
  schemas of types of perspectives (labelled by symbols $\rho,\rho^{-1}, \sigma_x,\sigma_y$:
$\rho$ for $\raise3ex\hbox{\xymatrix{{}\ar@{-}[dr]&{}\ar@{-}[dr]&{}\ar@{-}[dll]\\{}&{}&{}}}$,
$\rho^{-1}$ for $\raise3ex\hbox{\xymatrix{{}\ar@{-}[drr]&{}\ar@{-}[dl]&{}\ar@{-}[dl]\\{}&{}&{}}}$,
and
$\sigma$ for one of $\raise3ex\hbox{\xymatrix{{}\ar@{-}[dr]&{}\ar@{-}[dl]&{}\ar@{-}[d]\\{}&{}&{}}}$)
  were used in \cite{STP3K5} to determine suitable types.
  Here, we indicate these labels, but we do not quote precise definitions, the more we do not cite the whole
  theory of \cite{STP3K5}.
  \begin{itemize}\itemsep-2pt\def\labelitemi{--}
  \item Ad \eqref{class:1}:
    Observe Figure \ref{fig:class:1}. In accordance with the notation of \cite{STP3K5},
    ${\goth M}_0$ is determined by the sequence $(\sigma_x,\sigma_x,\rho)$.
  \newline\strut\quad\quad
  $c_{2,3} \in \overline{c_{1,2},c_{1,4}},\; \overline{b_3,b_4},\; \overline{a_2,a_3}$,
  $c_{3,4}\in \overline{c_{1,3},c_{1,4}},\; \overline{b_4,b_2},\; \overline{a_3,a_4}$,
  $c_{2,4} \in \overline{c_{1,2},c_{1,3}},\; \overline{b_3,b_2},\; \overline{a_2,a_4}$.
  \newline\strut\quad\quad
  $b_1$ is the centre of $\Delta_1$ and $\Delta_2$,
  $a_1$ is the centre of $\Delta_2$ and $\Delta_3$,
  and
  $p$ is the centre of $\Delta_1$ and $\Delta_3$
  (lines in the diagram join points which correspond each to other under respective
  perspective).
  \item Ad \eqref{class:2}:
    Observe Figure \ref{fig:class:2}. In accordance with the notation of \cite{STP3K5},
    ${\goth M}_0$ is determined by the sequence $(\sigma_x,\sigma_x,\sigma_y)$.
  \newline\strut\quad\quad
  $c_{2,3} \in \overline{c_{1,2},c_{1,4}},\; \overline{b_3,b_2},\; \overline{a_2,a_3}$,
  $c_{3,4}\in \overline{c_{1,3},c_{1,4}},\; \overline{b_4,b_2},\; \overline{a_3,a_4}$,
  $c_{2,4} \in \overline{c_{1,2},c_{1,3}},\; \overline{b_3,b_4},\; \overline{a_2,a_4}$.
  \newline\strut\quad\quad
  $p$ is the centre of $\Delta_1$ and $\Delta_2$,
  $a_1$ is the centre of $\Delta_2$ and $\Delta_3$,
  and
  $b_1$ is the centre of $\Delta_1$ and $\Delta_3$
  \item Ad \eqref{class:3}:
    Observe Figure \ref{fig:class:3}. In accordance with the notation of \cite{STP3K5},
    ${\goth M}_0$ is determined by the sequence $(\rho,\rho^{-1},\id)$.
  \newline\strut\quad\quad
  $c_{1,2} \in \overline{c_{2,3},c_{3,4}},\; \overline{b_3,b_1},\; \overline{a_2,a_1}$,
  $c_{2,4}\in \overline{c_{1,3},c_{3,4}},\; \overline{b_4,b_2},\; \overline{a_2,a_4}$,
  $c_{1,4} \in \overline{c_{2,3},c_{1,3}},\; \overline{b_1,b_4},\; \overline{a_1,a_4}$.
  \newline\strut\quad\quad
  $b_3$ is the centre of $\Delta_1$ and $\Delta_2$,
  $a_3$ is the centre of $\Delta_2$ and $\Delta_3$,
  and
  $p$ is the centre of $\Delta_1$ and $\Delta_3$.
  \item Ad \eqref{class:4}:
    Observe Figure \ref{fig:class:4}. In accordance with the notation of \cite{STP3K5},
    ${\goth M}_0$ is determined by the sequence $(\rho,\rho,\rho)$.
  \newline\strut\quad\quad
  $c_{1,2} \in \overline{c_{2,3},c_{3,4}},\; \overline{b_4,b_1},\; \overline{a_2,a_1}$,
  $c_{2,4}\in \overline{c_{1,3},c_{3,4}},\; \overline{b_1,b_2},\; \overline{a_2,a_4}$,
  $c_{1,4} \in \overline{c_{2,3},c_{1,3}},\; \overline{b_2,b_4},\; \overline{a_1,a_4}$.
  \newline\strut\quad\quad
  $b_3$ is the centre of $\Delta_1$ and $\Delta_2$,
  $p$ is the centre of $\Delta_2$ and $\Delta_3$,
  and
  $a_3$ is the centre of $\Delta_1$ and $\Delta_3$.
  \item Ad \eqref{class:5}:
    Observe Figure \ref{fig:class:5}. In accordance with the notation of \cite{STP3K5},
    ${\goth M}_0$ is determined by the sequence $(\rho,\rho^{-1},\sigma_x)$.
  \newline\strut\quad\quad
  $c_{1,2} \in \overline{c_{2,3},c_{3,4}},\; \overline{b_4,b_1},\; \overline{a_2,a_1}$,
  $c_{2,4}\in \overline{c_{1,3},c_{3,4}},\; \overline{b_4,b_2},\; \overline{a_2,a_4}$,
  $c_{1,4} \in \overline{c_{2,3},c_{1,3}},\; \overline{b_1,b_2},\; \overline{a_1,a_4}$.
  \newline\strut\quad\quad
  $b_3$ is the centre of $\Delta_1$ and $\Delta_2$,
  $a_3$ is the centre of $\Delta_2$ and $\Delta_3$,
  and
  $p$ is the centre of $\Delta_1$ and $\Delta_3$.
  \end{itemize}
  This completes our proof.
\end{proof}

\begin{figure}[t]  
\begin{center}  
  \begin{minipage}[m]{0.45\textwidth}  
   \begin{center} 
   \xymatrix{%
    {\Delta_1:}
    &
    {a_{4}}\ar@{-}[dr]\ar@{-}[ddr]
    &
    {a_{3}}\ar@{-}[dl]\ar@{-}[ddr]
    &
    {a_{2}}\ar@{-}[d]\ar@(dr,ur)@{-}[ddll]
    \\
    {\Delta_2:}
    &
    {c_{1,3}}\ar@{-}[dr]
    &
    {c_{1,4}}\ar@{-}[dl]
    &
    {c_{1,2}}\ar@{-}[d]
    \\
    {\Delta_3:}
    &
    {b_{2}}
    &
    {b_{4}}
    &
    {b_{3}}
    }
    \end{center}
    \refstepcounter{figure}\label{fig:class:1}
    \begin{center}
    Figure \ref{fig:class:1}.
    The diagram of the line $\{ c_{2,3}, c_{2,4}, c_{3,4} \}$
  in ${\goth M}_0$.\myend
    \end{center}
  \end{minipage}  
  \quad
  \begin{minipage}[m]{0.45\textwidth} 
  \begin{center}
  \xymatrix{%
    {\Delta_1:}
    &
    {b_{3}}\ar@{-}[d]\ar@{-}[ddr]
    &
    {b_{2}}\ar@{-}[dr]\ar@{-}[ddl]
    &
    {b_{4}}\ar@{-}[dl]\ar@(dr,ur)@{-}[dd]
    \\
    {\Delta_2:}
    &
    {c_{1,2}}\ar@{-}[d]
    &
    {c_{1,4}}\ar@{-}[dr]
    &
    {c_{1,3}}\ar@{-}[dl]
    \\
    {\Delta_3:}
    &
    {a_{2}}
    &
    {a_{3}}
    &
    {a_{4}}
    }
  \end{center}
    \refstepcounter{figure}\label{fig:class:2}
    \begin{center}
    Figure \ref{fig:class:2}. The diagram of the line $\{ c_{2,3}, c_{2,4}, c_{3,4} \}$
  in ${\goth M}_0$. \myend
    \end{center}
  \end{minipage}  
 \end{center}
\end{figure}

\begin{figure}[t] 
\begin{center}
  \begin{minipage}[m]{0.45\textwidth} 
  \begin{center}
    \xymatrix{%
    {\Delta_1:}
    &
    {a_{4}}\ar@{-}[dr]\ar@(dl,ul)@{-}[dd]
    &
    {a_{2}}\ar@{-}[dr]\ar@(dl,ul)@{-}[dd]
    &
    {a_{1}}\ar@{-}[dll]\ar@(dr,ur)@{-}[dd]
    \\
    {\Delta_2:}
    &
    {c_{1,3}}\ar@{-}[drr]
    &
    {c_{3,4}}\ar@{-}[dl]
    &
    {c_{2,3}}\ar@{-}[dl]
    \\
    {\Delta_3:}
    &
    {b_{4}}
    &
    {b_{2}}
    &
    {b_{1}}
    }
  \end{center}
  \refstepcounter{figure}\label{fig:class:3}
  \begin{center}
  Figure \ref{fig:class:3}. The diagram of the line $\{ c_{1,2}, c_{1,4}, c_{2,4} \}$
  in ${\goth M}_0$. \myend
  \end{center}
  \end{minipage} 
  \quad
  \begin{minipage}[m]{0.45\textwidth} 
  \begin{center}
    \xymatrix{%
    {\Delta_1:}
    &
    {c_{2,3}}\ar@{-}[dr]\ar@{-}[ddr]
    &
    {c_{3,4}}\ar@{-}[dr]\ar@{-}[ddr]
    &
    {c_{1,3}}\ar@{-}[dll]\ar@(dr,ur)@{-}[ddll]
    \\
    {\Delta_2:}
    &
    {a_{1}}\ar@{-}[dr]
    &
    {a_{2}}\ar@{-}[dr]
    &
    {a_{4}}\ar@{-}[dll]
    \\
    {\Delta_3:}
    &
    {b_{4}}
    &
    {b_{1}}
    &
    {b_{2}}
    }
  \end{center}
  \refstepcounter{figure}\label{fig:class:4}
  \begin{center}
  Figure \ref{fig:class:4}. The diagram of the line $\{ c_{1,2}, c_{2,3}, c_{2,4} \}$
  in ${\goth M}_0$. \myend
  \end{center}
  \end{minipage} 
\end{center}
\end{figure}

\begin{figure}[b] 
\begin{center}
  \begin{minipage}[m]{0.45\textwidth} 
    \xymatrix{%
    {\Delta_1:}
    &
    {a_{4}}\ar@{-}[dr]\ar@{-}[ddr]
    &
    {a_{2}}\ar@{-}[dr]\ar@{-}[ddl]
    &
    {a_{1}}\ar@{-}[dll]\ar@(dr,ur)@{-}[dd]
    \\
    {\Delta_2:}
    &
    {c_{1,3}}\ar@{-}[drr]
    &
    {c_{3,4}}\ar@{-}[dl]
    &
    {c_{2,3}}\ar@{-}[dl]
    \\
    {\Delta_3:}
    &
    {b_{2}}
    &
    {b_{4}}
    &
    {b_{1}}
    }
  \end{minipage}
\end{center}
  \caption{The diagram of the line $\{ c_{1,2}, c_{1,4}, c_{2,4} \}$
  in ${\goth M}_0$. 
\myend} 
\label{fig:class:5}
\end{figure}

Combining \ref{prop:iso1}, \ref{lem:vebauty}, and \ref{lem:vebtypy} one can write down
explicitly the list of automorphism groups of the structures defined in \ref{glowne1};
we pass over this task, as no new essential information can be obtained on this way.


\section{Perspectivities associated with boolean complementing}\label{sec:bool-pers}

Here, we shall pay attention to the structures $\perspace(p,\sigma,{\goth N})$,
where $\goth N$ is a $\konftyp(6,2,4,3)$-configuration 
(i.e. it is the Veblen configuration, suitably labelled, defined on $\sub_2(I_4)$)
and
$\sigma\in S_{\sub_2(I_4)}$ is defined as a composition of the boolean complementing 
in $\sub_2(I_4)$ and a map determined by a permutation of $I_4$ (cf.  definition of a skew perspective). 
Consequently, we obtain another class of $\konftyp(15,4,20,3)$-configurations. 
\par

%
So, let
$\sigma = \varkappa \overline{\varphi}$ where $\varphi \in S_{I_4}$.
It is known that
\begin{equation*}
  \varkappa \overline{\varphi} = \overline{\varphi} \varkappa \; \text{ for every }
  \varphi \in S_{I_4}.
\end{equation*}
\def\kapers(#1,#2){{\goth K}_{{#1},{#2}}}
Let us write, for short
\begin{ctext}
  $\perspace(p,{\overline{\varphi}\varkappa},{\goth N}) =\colon \kapers(\varphi,{\goth N})$;
\end{ctext}
let $\goth N$ be any Veblen configuration defined on $\sub_2(I_4)$.
The following formula, valid in $\kapers(\varphi,{\goth N})$, will be frequently used
without explicit quotation:
\begin{ctext}
  $b_i \oplus b_j = c_{\varkappa\overline{\varphi^{-1}}(\{ i,j \})}$.
\end{ctext}

\medskip
As we know, $A^\ast$ and $B^\ast$ are two $K_5$ graphs freely contained in 
$\kapers(\varphi,{\goth N})$.
\begin{lem}\label{prop:bool:bezgraf}
  $\kapers(\varphi,{\goth N})$ does not freely contain any other $K_5$-graph.
\end{lem}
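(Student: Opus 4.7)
The plan is to reduce directly to Lemma \ref{lem:nextgraf00}, which characterizes all free $K_{n+1}$-subgraphs of a skew perspective $\perspace(n,\sigma,{\goth N})$ for an arbitrary $\sigma \in S_{\sub_2(I)}$: any such extra subgraph $G$ must have the form $G_{(i_0)} = \{a_{i_0},b_{i_0}\} \cup \starof(i_0)$ for some $i_0 \in I_4$ such that $\starof(i_0)$ is a free clique in $\goth N$. So I would suppose for contradiction that $\kapers(\varphi,{\goth N})$ contains such a $G_{(i_0)}$ distinct from $K_{A^\ast}$, $K_{B^\ast}$, and then force a contradiction from the bijectivity of $\varphi$.

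The key computation is to trace through the condition that $b_{i_0}$ is collinear with every point $c_{\{i_0,j\}} = a_{i_0}\oplus a_j$ of $\starof(i_0)$. Exactly as in the proof of \ref{lem:nextgraf}, for each $j \in I_4\setminus\{i_0\}$ there must exist $j' \in I_4\setminus\{i_0\}$ with $b_{i_0}\oplus b_{j'} = c_{\{i_0,j\}}$, i.e.\ with $\sigma(\{i_0,j\}) = \{i_0,j'\}$. For our $\sigma = \varkappa\,\overline{\varphi}$ this becomes
\begin{equation*}
  I_4\setminus\{\varphi(i_0),\varphi(j)\} \;=\; \{i_0,j'\},
\end{equation*}
so in particular $i_0 \notin \{\varphi(i_0),\varphi(j)\}$, i.e.\ $\varphi(i_0)\neq i_0$ and $\varphi(j)\neq i_0$.

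Now I would exploit that this last condition has to hold for every $j\in I_4\setminus\{i_0\}$ simultaneously. Since $\varphi$ is a bijection of $I_4$, the element $\varphi^{-1}(i_0)$ is unique; if it were distinct from $i_0$ one could take $j=\varphi^{-1}(i_0)$ and obtain $\varphi(j)=i_0$, contradicting the requirement. Hence $\varphi(i_0)=i_0$, which flatly contradicts the already established $\varphi(i_0)\neq i_0$. This inconsistency shows that no admissible $i_0$ can exist.

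No step is really an obstacle here; the essential point is simply that $\varkappa$ removes $i_0$ from the image $\sigma(\{i_0,j\})$ exactly when $\varphi$ moves neither $i_0$ nor $j$ to $i_0$, and those two requirements are incompatible for a single permutation $\varphi$. The argument does not use any property of $\goth N$ beyond that provided by \ref{lem:nextgraf00}, so the conclusion is independent of the labelling of the axial Veblen configuration.
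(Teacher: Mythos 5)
Your proof is correct and takes essentially the same route as the paper's: both reduce, via the companion-paper lemma on free $K_5$-subgraphs, to the requirement that $i_0\in\varkappa\overline{\varphi}(\{i_0,j\})$ for every $j\neq i_0$, and then note this is impossible for a bijection $\varphi$. The paper phrases the final contradiction as ``no room for $\varphi^{-1}(i_0)$'' in the intersection of three $2$-element sets, which is exactly your observation that $i_0\notin\varphi(I_4)$.
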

\begin{proof}
  Suppose that $G\neq K_{A^\ast},K_{B^\ast}$ is a complete $K_5$ graph freely contained in
  $\kapers(\varphi,{\goth N})$.
  Arguing as in the proof of \cite[Lem. \brak]{maszko} we find $i_0\in I_4$ such that
  $a_{i_0}\in A,G$, $b_{i_0}\in B,G$. 
  Then 
  $G\setminus\{ a_{i_0},b_{i_0} \} \subset \starof(i_0) \subset C$.
  So, $c_{i_0,x},b_{i_0}$ must colline for every $x\in I_4\setminus\{i_0\} =: I'$;
  this means: for every $x\in I'$ there is $x'\in I$ such that 
  $c_{i_0,x} = b_{i_0} \oplus b_{x'} = c_{\varkappa\overline{\varphi^{-1}}(\{ i_0,x' \})}$.
  Write $I_4 = \{ i_0,j,k,l \}$. Then we obtain
  \begin{ctext}
    $\{ \varphi^{-1}(i_0),\varphi^{-1}(j') \} = \{ k,l \}$,
    $\{ \varphi^{-1}(i_0),\varphi^{-1}(l') \} = \{ k,j \}$, and
    $\{ \varphi^{-1}(i_0),\varphi^{-1}(k') \} = \{ j,l \}$.
  \end{ctext}
  Consequently, there is no room for $\varphi^{-1}(i_0)$.
\end{proof}
As an immediate consequence of \ref{prop:bool:bezgraf} we conclude with
\begin{cor}\label{cor:bool:fix}
  $f(p) = p$ for every $f\in \Aut({\kapers(\varphi,{\goth N})})$.
\end{cor}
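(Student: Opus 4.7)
The plan is to deduce the corollary directly from Proposition \ref{prop:bool:bezgraf} together with the elementary observation that every automorphism of a configuration must permute its freely contained maximal complete subgraphs.

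First, I would note that $A^\ast = A \cup \{p\}$ and $B^\ast = B \cup \{p\}$ are both $K_5$-subgraphs of $\kapers(\varphi,{\goth N})$ freely contained in it (this was already remarked in the text just before the proposition). By Proposition \ref{prop:bool:bezgraf}, these are the \emph{only} free $K_5$-subgraphs of $\kapers(\varphi,{\goth N})$. Since the property of being a freely contained complete subgraph is preserved under any automorphism $f$, the set $\{A^\ast, B^\ast\}$ is setwise stable under $f$; that is,
\begin{equation*}
   \{ f(A^\ast), f(B^\ast) \} \;=\; \{ A^\ast, B^\ast \}.
\end{equation*}

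Next, I would simply intersect: whether $f$ fixes both $A^\ast$ and $B^\ast$ setwise, or swaps them, we obtain
\begin{equation*}
   f(A^\ast \cap B^\ast) \;=\; f(A^\ast) \cap f(B^\ast) \;=\; A^\ast \cap B^\ast.
\end{equation*}
But $A$ and $B$ are disjoint by construction, so $A^\ast \cap B^\ast = \{ p \}$. Hence $f(p) = p$, which proves the claim.

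No step here looks delicate — the entire content sits inside Proposition \ref{prop:bool:bezgraf}, which has already been established. The only minor thing worth spelling out, for clarity of exposition, is the remark that an automorphism preserves the relation of being a freely contained subgraph, but this is immediate from the definition (an isomorphism carries lines to lines and disjoint edges to disjoint edges, so it transports the free-containment witnesses verbatim).
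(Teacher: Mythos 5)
Your proposal is correct and is exactly the argument the paper intends: the corollary is stated as an ``immediate consequence'' of Lemma \ref{prop:bool:bezgraf}, and the implicit reasoning is precisely that an automorphism permutes the only two free $K_5$-subgraphs $A^\ast$ and $B^\ast$, whose intersection is $\{p\}$. You have merely made explicit the step the authors left to the reader.
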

\begin{lem}\label{lem:perNOkap}
  There is no $\sigma\in S_{I_4}$ such that 
  $\kapers(\varphi,{\goth N}) \cong \perspace(p,\overline{\sigma},{\goth N}')$
  for a Veblen configuration ${\goth N}'$.
\end{lem}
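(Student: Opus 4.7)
The plan is to argue by contradiction. Suppose that such a $\sigma\in S_{I_4}$, a Veblen configuration ${\goth N}'$, and an isomorphism $f\colon\kapers(\varphi,{\goth N})\to\perspace(p,\overline{\sigma},{\goth N}')$ exist. By Proposition~\ref{prop:bool:bezgraf}, the source $\kapers(\varphi,{\goth N})$ has exactly two free $K_5$-subgraphs, $K_{A^\ast}$ and $K_{B^\ast}$, meeting only at $p$; hence, by isomorphism, the target likewise contains exactly two such subgraphs (necessarily its own $K_{A^\ast}$ and $K_{B^\ast}$), meeting only at its center. Consequently $f(p)=p$ and $\{f(A),f(B)\}=\{A,B\}$.

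Assume first that $f(A)=A$ and $f(B)=B$. Then there is $\psi\in S_{I_4}$ with $f(a_i)=a_{\psi(i)}$, and from $\{p,a_i,b_i\}\in\lines_p$ we obtain $f(b_i)=b_{\psi(i)}$, and from $\{a_i,a_j,c_{\{i,j\}}\}\in\lines_A$ we obtain $f(c_u)=c_{\overline{\psi}(u)}$ for all $u\in\sub_2(I_4)$. Applying $f$ to the source relation $b_i\oplus b_j=c_{\varkappa\overline{\varphi^{-1}}(\{i,j\})}$ and comparing with the target relation $b_{\psi(i)}\oplus b_{\psi(j)}=c_{\overline{\sigma^{-1}}\,\overline{\psi}(\{i,j\})}$ yields
\[
  \overline{\psi}\circ\varkappa\circ\overline{\varphi^{-1}} \;=\; \overline{\sigma^{-1}}\circ\overline{\psi}.
\]
Since $\varkappa$ commutes with every $\overline{\mu}$ and $\mu\mapsto\overline{\mu}$ is a group homomorphism, this rearranges to $\varkappa=\overline{\tau}$ with $\tau=\varphi\psi^{-1}\sigma^{-1}\psi\in S_{I_4}$. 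The case $f(A)=B$, $f(B)=A$ is entirely analogous and also produces an equation of the form $\varkappa=\overline{\tau}$ for some $\tau\in S_{I_4}$.

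It then suffices to verify that no permutation $\tau\in S_{I_4}$ satisfies $\overline{\tau}=\varkappa$. Such a $\tau$ would have to fulfil $\{\tau(i),\tau(j)\}=I_4\setminus\{i,j\}$ for every pair $\{i,j\}\in\sub_2(I_4)$. Applied to $\{1,2\}$ and $\{1,3\}$ this gives $\{\tau(1),\tau(2)\}=\{3,4\}$ and $\{\tau(1),\tau(3)\}=\{2,4\}$, whose intersection forces $\tau(1)=4$, and hence $\tau(2)=3$, $\tau(3)=2$; but applied to $\{1,4\}$ the condition demands $\{4,\tau(4)\}=\{2,3\}$, a contradiction. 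The principal work lies in the reduction carried out in the second paragraph (which could equally well be invoked from the general isomorphism criterion underlying Proposition~\ref{prop:iso1}); the final non-existence check for $\tau$ is routine.
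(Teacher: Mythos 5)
Your proposal is correct and follows essentially the same route as the paper: it pins down $f(p)=p$ via the count of free $K_5$-subgraphs, applies $f$ to $b_i\oplus b_j$ in the two cases $f(A)=A$ and $f(A)=B$, and in each case extracts an identity of the form $\varkappa=\overline{\tau}$ for some $\tau\in S_{I_4}$, which is impossible. The only difference is that you spell out why no permutation induces $\varkappa$, a point the paper leaves as an evident inconsistency.
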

\begin{proof}
  Suppose that an isomorphism $f$ exists which maps 
  $\perspace(p,\overline{\sigma},{\goth N}')$ onto $\kapers(\varphi,{\goth N})$.
  Then $f(p) = p$ and either $f$ maps $A$ onto $A$, $B$ onto $B$, or $f$ interchanges
  $A$ and $B$. There exists $\alpha\in S_{I_4}$ such that $f(a_i) = a_{\alpha(i)}$
  for each $i \in I_4$ or $f(a_i) = b_{\alpha(i)}$ for each $i$.
  In the first case we obtain 
  \begin{math}
   c_{\overline{\alpha}\overline{\sigma^{-1}}(\{i,j\} )} = 
   f(c_{\overline{\sigma^{-1}}\{i,j\} }) =
   f(b_i \oplus b_j) = f(b_i) \oplus f(b_j) = b_{\alpha(i)} \oplus b_{\alpha(j)} =
   c_{ \varkappa\overline{\varphi^{-1}}\overline{\alpha}( \{ i,j \} ) }.
  \end{math}
  This gives, inconsistently, 
  $\varkappa = \overline{\alpha \sigma^{-1} \alpha^{-1} \varphi }$.
  \par\noindent
  In the second case, with analogous computation we obtain
  $f(c_{ \{ i,j \} }) = c_{\varkappa\overline{\varphi^{-1}\alpha}( \{ i,j \} ) }$
  and then we arrive to 
  $\varkappa = \overline{\varphi^{-1} \alpha \sigma^{-1} \alpha^{-1} }$.
\end{proof}
The following is seen:
\begin{lem}\label{lem:kap-wymian}
  Let $\goth N$ be a Veblen configuration defined on $\sub_2(I_4)$.
  Then the map
  \begin{equation}
    p\mapsto p,\quad a_i \mapsto b_i \mapsto a_i,\, i\in I_4, \quad
    c_u \mapsto c_{\varkappa(u)},\, u\in \sub_2(I_4)
  \end{equation}
  is an isomorphism of 
  $\perspace(p,\id,{\goth N})$ and $\perspace(p,\id,\varkappa({\goth N}))$.
\end{lem}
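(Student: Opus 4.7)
The plan is a direct, line-by-line verification that the proposed map carries each of the four families of lines of the source onto lines of the target, with the whole computation driven by the identity $\varkappa\circ\varkappa=\id_{\sub_2(I_4)}$ and by the very definition of $\varkappa(\goth N)$. That $f$ is a bijection of the point set is immediate: it fixes $p$, interchanges $A$ with $B$ index by index, and acts on $C$ by the involutory relabeling $c_u\mapsto c_{\varkappa(u)}$.

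For the preservation of lines, I would run through the four families in turn. The perspective lines $\{p,a_i,b_i\}$ are carried to themselves as sets by the $a\leftrightarrow b$ swap. An $A$-line $\{a_i,a_j,c_{\{i,j\}}\}$ of the source is sent to $\{b_i,b_j,c_{\varkappa(\{i,j\})}\}$, which matches the recipe for the $B$-lines in the target (the $c$-index is precisely what the target's skew recipe produces from the pair $\{i,j\}$). Symmetrically, a $B$-line of the source is carried to an $A$-line of the target, and here one uses $\varkappa^2=\id$ to see that the $c$-index unwinds correctly. Finally, an axial line $\{c_u,c_v,c_w\}$ of $\goth N$ goes to $\{c_{\varkappa(u)},c_{\varkappa(v)},c_{\varkappa(w)}\}$, which is by the very construction of the axis a line of $\varkappa(\goth N)$.

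Alternatively and more compactly, the claim fits directly into Proposition \ref{prop:iso0} by taking $\varphi=\id_{I_4}$ in the ``swap'' branch: the displayed formulas there reproduce the map in the statement, and the axis-isomorphism condition on $\goth N_1,\goth N_2$ reduces to the tautology that $\varkappa$ is an isomorphism $\goth N\to\varkappa(\goth N)$. The only real bookkeeping to watch is that the skew parameter on the $c$-indices, the $a\leftrightarrow b$ swap, and the $\varkappa$-relabeling of the $c$-points cancel each other out; the involution $\varkappa^2=\id$ is exactly what makes these conditions self-consistent, and that is the one mild obstacle to keep straight in writing the proof cleanly.
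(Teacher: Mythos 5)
Your verification is correct, and it is exactly the computation the paper omits: the authors introduce this lemma with ``The following is seen'' and supply no proof, so there is nothing to compare against beyond the definitions themselves. One point worth making explicit, since it is the only place the argument could go wrong: both structures here carry the skew $\varkappa$ (the lemma sits inside the discussion of $\kapers(\varphi,{\goth N})=\perspace(p,\overline{\varphi}\varkappa,{\goth N})$ with $\varphi=\id$), and your step sending the $A$-line $\{a_i,a_j,c_{\{i,j\}}\}$ to $\{b_i,b_j,c_{\varkappa(\{i,j\})}\}$ matches the target's $B$-line recipe $c_{\delta^{-1}(\{i,j\})}$ only because $\delta=\varkappa=\varkappa^{-1}$; under a literal identity skew the image would not be a line of the target and the statement would be false, so this reading should be recorded in the write-up. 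Your alternative reduction to the ``swap'' branch of the general isomorphism criterion is also valid, but the version of that criterion covering $\varkappa$-type skews in this paper (Proposition \ref{prop:iso2}) is only established after this lemma, so the hands-on check you wrote first is the appropriate proof to record.
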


Now, we are in a position to prove an analogue of \ref{prop:iso1}.
\begin{prop}\label{prop:iso2}
  Let $\varphi_1,\varphi_2 \in S_{I_4}$, ${\goth N}_1,{\goth N}_2$ be two
  Veblen configurations defined on $\sub_2(I_4)$.
  The following conditions are equivalent.
  \begin{sentences}\itemsep-2pt
  \item\label{propiso2:war0} 
    $f$ is an isomorphism of 
    $\kapers({\varphi_1},{\goth N}_1)$ onto $\kapers({\varphi_2},{\goth N}_2)$.
  \item
    There is $\alpha\in S_{I_4}$ such that 
    one of the following holds
    %
    %
    %
    \begin{enumerate}[\rm(a)]\itemsep-2pt
    \item\label{propiso2:typ1} 
      \eqref{propiso1:typ1} and 
      \begin{eqnarray} \label{propiso2:war1}
        \overline{\alpha} & \text{ is }& \text{ an isomorphism of } {\goth N}_1 \text{ onto } {\goth N}_2,
	\\ \label{propiso2:war2}
        \varphi_2 & =  & {\varphi_1}^{\alpha}, 
      \end{eqnarray}
      or
    \item \label{propiso2:typ2}
     \begin{eqnarray} \nonumber
       f(a_i) \quad = \quad b_{\alpha(i)},\quad f(b_i) & = & a_{\alpha(i)}, 
       \\ \label{propiso2:war3}
        f(c_{\{i,j\}}) & = & c_{\varkappa\overline{\varphi_2^{-1}\alpha}(\{i,j\})},
       \; \{i,j\}\in \sub_2(I_4),
     \\ \label{propiso2:war4}
      \varkappa\overline{\varphi_2^{-1}\alpha} &\text{ is }& \text{ an isomorphism of } {\goth N}_1 \text{ onto } {\goth N}_2
     \\ \label{propiso2:war5}
        \varphi_2^{-1} & = & {\varphi_1}^{\alpha}
    \end{eqnarray}
    \end{enumerate}
    (cf. \eqref{iso:war2} and \eqref{iso:war3} resp.). Note that \eqref{propiso2:war5} implies 
    $\varkappa \overline{\varphi_2^{-1}\alpha} = \varkappa \overline{\alpha\varphi_1}$.
  \end{sentences}
\end{prop}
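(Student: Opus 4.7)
The plan is to mimic the proof of Proposition \ref{prop:iso1}, exploiting two features of the boolean setting that simplify bookkeeping. First, by Corollary \ref{cor:bool:fix} any isomorphism $f$ automatically satisfies $f(p)=p$, and by Lemma \ref{prop:bool:bezgraf} the only complete $K_5$-graphs freely contained in $\kapers(\varphi_\ell,{\goth N}_\ell)$ are $A^\ast$ and $B^\ast$. Consequently $f$ either maps $A\mapsto A,\, B\mapsto B$ (case (a)) or interchanges them (case (b)), and in each case a permutation $\alpha\in S_{I_4}$ is produced by looking at the action of $f$ on $A$. Second, I will repeatedly use the two evident identities $\varkappa^{2}=\id$ and $\varkappa\overline{\beta} = \overline{\beta}\varkappa$ for every $\beta\in S_{I_4}$.

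For the forward direction, in case (a) set $f(a_i) = a_{\alpha(i)}$; then $f(b_i) = p\oplus f(a_i) = b_{\alpha(i)}$ and, computing $f(a_i\oplus a_j)$, one gets $f(c_{\{i,j\}}) = c_{\overline{\alpha}(\{i,j\})}$, which is exactly \eqref{propiso1:typ1}. Since $f$ sends lines of ${\goth N}_1$ to lines of ${\goth N}_2$, this immediately gives \eqref{propiso2:war1}. In case (b), set $f(a_i) = b_{\alpha(i)}$; the identities $f(b_i) = a_{\alpha(i)}$ and $f(c_{\{i,j\}}) = b_{\alpha(i)}\oplus b_{\alpha(j)} = c_{\varkappa\overline{\varphi_2^{-1}\alpha}(\{i,j\})}$ are then forced, giving \eqref{propiso2:war3}, and the image of ${\goth N}_1$ being ${\goth N}_2$ yields \eqref{propiso2:war4}.

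The key step, where I expect the main bookkeeping care, is the derivation of the conjugation relations \eqref{propiso2:war2} and \eqref{propiso2:war5}. In case (a), compute $f(b_i\oplus b_j)$ in two ways: as $f(c_{\varkappa\overline{\varphi_1^{-1}}(\{i,j\})}) = c_{\overline{\alpha}\varkappa\overline{\varphi_1^{-1}}(\{i,j\})}$ and as $b_{\alpha(i)}\oplus b_{\alpha(j)} = c_{\varkappa\overline{\varphi_2^{-1}}\overline{\alpha}(\{i,j\})}$; commuting $\varkappa$ past the $\overline{\beta}$'s and cancelling yields $\alpha\varphi_1^{-1} = \varphi_2^{-1}\alpha$, i.e.\ $\varphi_2 = \varphi_1^{\alpha}$. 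In case (b) the analogous calculation produces $\overline{\alpha} = \varkappa\overline{\varphi_2^{-1}\alpha}\,\varkappa\overline{\varphi_1^{-1}} = \overline{\varphi_2^{-1}\alpha\varphi_1^{-1}}$, using $\varkappa^{2}=\id$, which rearranges to $\varphi_2^{-1} = \varphi_1^{\alpha}$. The only real trap here is tracking how many copies of $\varkappa$ appear and whether they annihilate correctly; everything else is a mechanical transposition.

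The converse is a routine check: given $\alpha$ satisfying the hypotheses in case (a) or (b), define $f$ by the prescribed formulas and verify $f(x\oplus y)=f(x)\oplus f(y)$ for $x,y\in A\cup B$. The $\varkappa$-commutation computations above run in reverse, and the assumption that the appropriate map ($\overline{\alpha}$ or $\varkappa\overline{\varphi_2^{-1}\alpha}$) is an isomorphism ${\goth N}_1 \to {\goth N}_2$ handles the lines contained in $C$.
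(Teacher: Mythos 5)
Your proposal is correct and follows essentially the same route as the paper's own proof: deduce $f(p)=p$ from Lemma \ref{prop:bool:bezgraf}, split into the two cases $f(A)=A$ or $f(A)=B$, read off the formulas for $f$ on $C$ and the isomorphism conditions on ${\goth N}_1,{\goth N}_2$, and extract \eqref{propiso2:war2} resp. \eqref{propiso2:war5} by computing $f(b_i\oplus b_j)$ in two ways, with the converse left as a routine verification. Your $\varkappa$-bookkeeping in the conjugation step is accurate (and in fact cleaner than the compressed equations displayed in the paper), so nothing further is needed.
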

\begin{proof}
  The proof is analogous to the proof of  \cite[Prop. \brak]{maszko} (cf. \ref{prop:iso1}).
  First, given an isomorphism $f$ as in \eqref{propiso2:war0} 
  from \ref{prop:bool:bezgraf} we have $f(p) = p$ and,
  consequently, either $f(A) = A$, $f(B) = B$, or $f(A) = B$, $f(B) = A$.
  In both cases there is an $\alpha\in S_{I_4}$ such that
  $f(a_i) = a_{\alpha(i)}$ for $i\in I_4$ in the first case, 
  and $f(a_i) = b_{\alpha(i)}$ in the second one.
  Then we obtain 
  $f(c_{ \{i,j \} }) = c_{\overline{\alpha}(\{ i,j\}) }$
  in the first case, and
  $f(c_{ \{i,j \} }) = c_{\varkappa \overline{\varphi_2^{-1}\alpha}(\{ i,j\}) }$
  in the second one, for all $\{ i,j \}\in\sub_2(I_4)$.
  This justifies \eqref{propiso1:typ1} and \eqref{propiso2:war3} in corresponding cases.
  Besides, since $f\restriction{C}$ is an isomorphism of ${\goth N}_1$ onto ${\goth N}_2$,
  we arrive to \eqref{propiso2:war1} and \eqref{propiso2:war4}, respectively.
  Finally, computing $f(b_i\oplus b_j)$ we obtain
  $\varkappa \overline{\alpha \varphi^{-1}} = \varkappa \overline{\varphi_2^{-1}}$
  in the first case
  and
  $\overline{\varphi_2 \alpha \varphi_1} = \overline{\alpha}$ in the second case.
  So, we have proved \eqref{propiso2:war2} and \eqref{propiso2:war5} 
  in the corresponding cases.
  \par
  A routine, though quite tidy computation justifies that a map $f$, when characterized 
  by \eqref{propiso2:typ1} or by \eqref{propiso2:typ2}, is an isomorphism
  as in \eqref{propiso2:war0} required.
\end{proof}
Let us write down a direct consequence of \ref{prop:iso2}
\begin{cor}\label{cor:2standard}
  Let $\alpha\in S_{I_4}$.
  Then
  $\perspace(p,\overline{\varphi}\varkappa,{\goth N}) \cong
   \perspace(p,\overline{\varphi^\alpha}\varkappa,{\overline{\alpha}({\goth N})})$.
\end{cor}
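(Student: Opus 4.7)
The plan is to invoke Proposition \ref{prop:iso2} directly, using its case \eqref{propiso2:typ1}. Concretely, I would set $\varphi_1 := \varphi$, $\varphi_2 := \varphi^\alpha$, ${\goth N}_1 := {\goth N}$, and ${\goth N}_2 := \overline{\alpha}({\goth N})$, and define the candidate bijection $f$ on the point sets by
\begin{equation*}
f(p) = p, \quad f(a_i) = a_{\alpha(i)}, \quad f(b_i) = b_{\alpha(i)}, \quad f(c_{\{i,j\}}) = c_{\{\alpha(i),\alpha(j)\}},
\end{equation*}
i.e.\ exactly formula \eqref{propiso1:typ1} from the equivalent condition \eqref{propiso2:typ1}.

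Next I would verify the two hypotheses of \eqref{propiso2:typ1}. Condition \eqref{propiso2:war1} asks that $\overline{\alpha}$ be an isomorphism of ${\goth N}_1 = {\goth N}$ onto ${\goth N}_2 = \overline{\alpha}({\goth N})$; this is essentially the definition of $\overline{\alpha}({\goth N})$ (the lines of the image configuration are the $\overline{\alpha}$-images of the lines of $\goth N$). Condition \eqref{propiso2:war2} reads $\varphi_2 = \varphi_1^\alpha$, which is the tautology $\varphi^\alpha = \varphi^\alpha$ by our choice of $\varphi_2$.

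With both conditions verified, Proposition \ref{prop:iso2} yields that $f$ is an isomorphism of $\kapers(\varphi,{\goth N}) = \perspace(p,\overline{\varphi}\varkappa,{\goth N})$ onto $\kapers(\varphi^\alpha,\overline{\alpha}({\goth N})) = \perspace(p,\overline{\varphi^\alpha}\varkappa,\overline{\alpha}({\goth N}))$, which is precisely the claim. There is no real obstacle here: the whole content lies in Proposition \ref{prop:iso2}, and the corollary is simply a reformulation obtained by fixing one of the two configurations and declaring the other to be its $\overline{\alpha}$-image. The only point worth highlighting is that one could also appeal to case \eqref{propiso2:typ2} to obtain a dual isomorphism interchanging $A$ and $B$, but the direct choice above already suffices.
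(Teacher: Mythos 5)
Your proof is correct and takes essentially the same route as the paper, which presents this corollary as a direct consequence of Proposition \ref{prop:iso2}: your instantiation $\varphi_1=\varphi$, $\varphi_2=\varphi^\alpha$, ${\goth N}_1={\goth N}$, ${\goth N}_2=\overline{\alpha}({\goth N})$, with $f$ induced by $\alpha$ via \eqref{propiso1:typ1}, is exactly the intended specialization, and the two hypotheses \eqref{propiso2:war1} and \eqref{propiso2:war2} are verified just as you say. Nothing further is needed.
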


\ifhozna\relax\else

\begin{figure}
\begin{center}
  \includegraphics[scale=0.5]{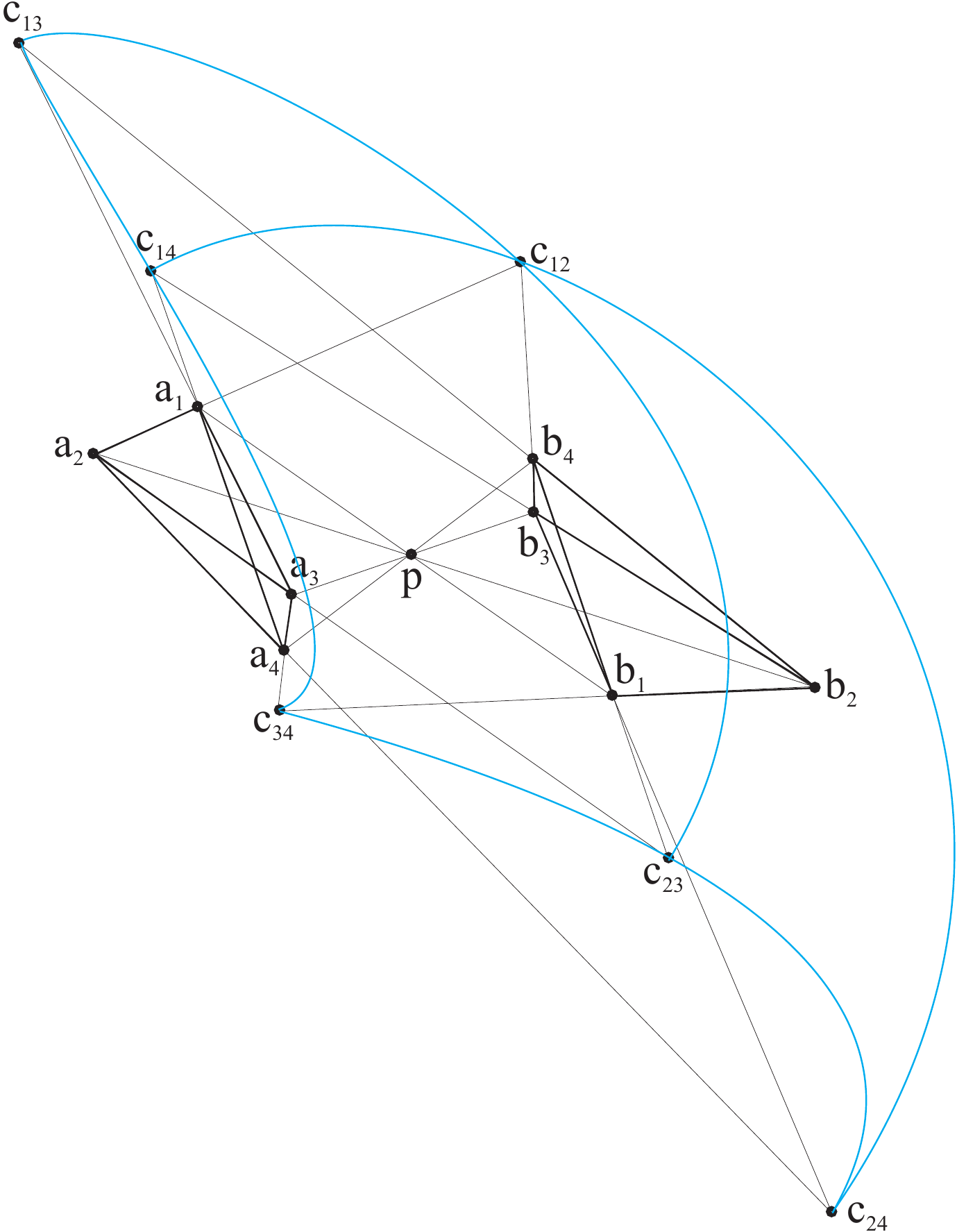}
\end{center}
\caption{The configuration $\perspace(4,\id,{\GrasSpace(4,2)})$}
\label{fig:skos1}
\end{figure}

\begin{figure}
\begin{center}
  \includegraphics[scale=0.5]{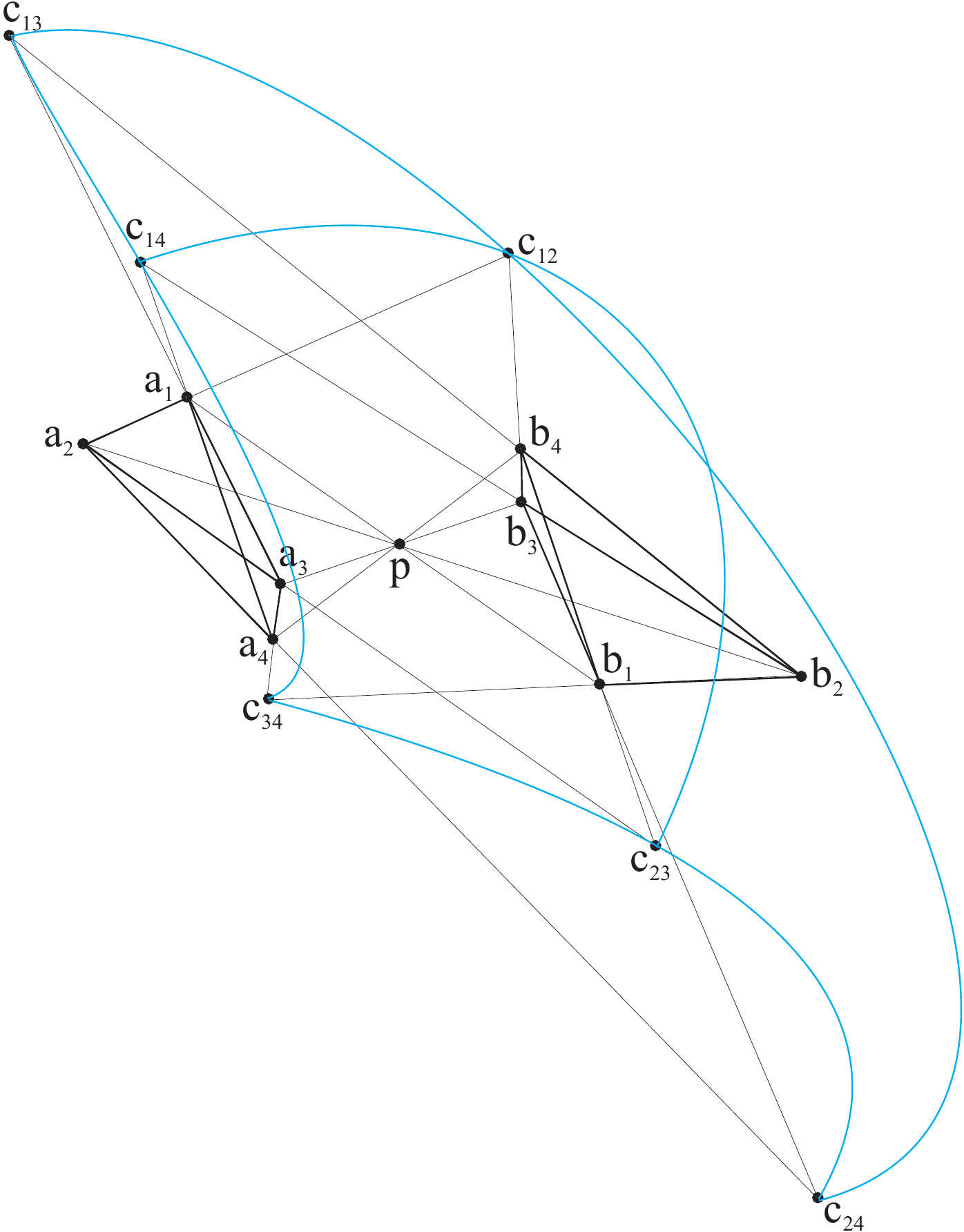}
\end{center}
\caption{The configuration $\perspace(4,\id,{\VeblSpace(2)})$}
\label{fig:skos2}
\end{figure}

\begin{figure}
\begin{center}
  \includegraphics[scale=0.5]{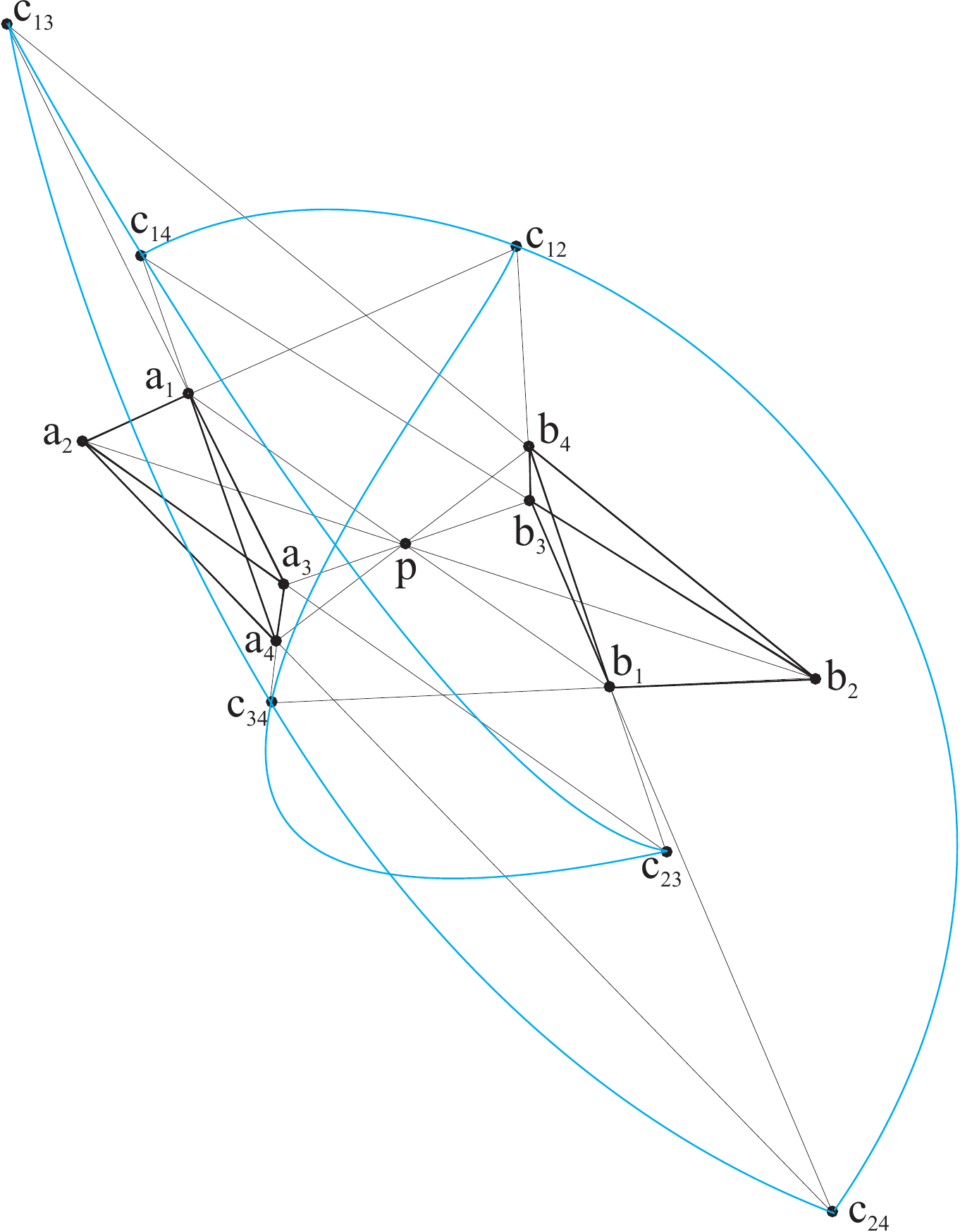}
\end{center}
\caption{The configuration $\perspace(4,\id,{\cal V}_5)$}
\label{fig:skos3}
\end{figure}

\fi

In view of \ref{fct:vebetyk}, \ref{cor:2standard}, and \ref{lem:kap-wymian} we obtain the following,
rather rough, yet, classification.
\begin{prop}\label{bool:klasif1}
  For every $\varphi\in S_{I_4}$ and every Veblen configuration $\goth N$ defined on $\sub_2(I_4)$
  there is $\beta\in S_{I_4}$ such that 
  $\kapers(\varphi,{\goth N}) \cong \kapers(\beta,{\goth V})$,
  where $\goth V$ is a structure in the list \eqref{veblist1}.
\end{prop}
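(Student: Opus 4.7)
The plan is to apply Fact \ref{fct:vebetyk} to reduce the arbitrary Veblen axis $\goth N$ to a standard one from the list \eqref{veblist1}, and then use Corollary \ref{cor:2standard} or the general isomorphism characterization of Proposition \ref{prop:iso2} to transport the skew $\varphi$ accordingly. By Fact \ref{fct:vebetyk}, there exist $\alpha_0 \in S_{I_4}$ and a configuration $\goth V$ in \eqref{veblist1} such that either (i) $\overline{\alpha_0}$ or (ii) $\varkappa\overline{\alpha_0}$ is an isomorphism from $\goth N$ onto $\goth V$. I will treat the two cases separately.

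In case (i), $\overline{\alpha_0}(\goth N) = \goth V$, and Corollary \ref{cor:2standard} applied with $\alpha = \alpha_0$ gives immediately $\kapers(\varphi,\goth N) \cong \kapers(\varphi^{\alpha_0}, \overline{\alpha_0}(\goth N)) = \kapers(\varphi^{\alpha_0}, \goth V)$, so $\beta := \varphi^{\alpha_0}$ does the job.

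In case (ii), I would invoke Proposition \ref{prop:iso2}\eqref{propiso2:typ2}, i.e., the ``$A\leftrightarrow B$''-swap type of isomorphism. I claim that $\beta := (\varphi^{-1})^{\alpha_0} = \alpha_0\varphi^{-1}\alpha_0^{-1}$ works, with the auxiliary permutation $\alpha := \alpha_0\varphi^{-1} \in S_{I_4}$. A routine computation gives $\beta^{-1} = \alpha_0\varphi\alpha_0^{-1} = \alpha\varphi\alpha^{-1} = \varphi^{\alpha}$, verifying \eqref{propiso2:war5}; while $\beta^{-1}\alpha = \alpha_0\varphi\alpha_0^{-1}\cdot\alpha_0\varphi^{-1} = \alpha_0$, so that $\varkappa\overline{\beta^{-1}\alpha} = \varkappa\overline{\alpha_0}$ is an isomorphism $\goth N \to \goth V$ by the case-(ii) hypothesis, which is exactly \eqref{propiso2:war4}. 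Proposition \ref{prop:iso2} then produces the required isomorphism $\kapers(\varphi,\goth N) \cong \kapers(\beta,\goth V)$ with $\goth V$ in \eqref{veblist1}.

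The main obstacle is case (ii), where the extra $\varkappa$ coming from Fact \ref{fct:vebetyk} has to be absorbed somewhere. The key observation is that the ``$A\leftrightarrow B$''-swap-type isomorphism of Proposition \ref{prop:iso2}\eqref{propiso2:typ2} automatically introduces a $\varkappa$ in its action on $C$ (the pattern $f(c_{\{i,j\}}) = c_{\varkappa\overline{\beta^{-1}\alpha}(\{i,j\})}$, visible most cleanly in Lemma \ref{lem:kap-wymian} in the case $\varphi = \id$); it is precisely this $\varkappa$ that absorbs the one from Fact \ref{fct:vebetyk}, at the cost of replacing $\varphi$ by $\varphi^{-1}$ conjugated by $\alpha_0$. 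Once the right $\beta$ is guessed, the verification reduces to the two group-theoretic identities displayed above.
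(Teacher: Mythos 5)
Your proof is correct and follows essentially the same route the paper sketches: reduce the axis to one of \eqref{veblist1} via Fact \ref{fct:vebetyk} and Corollary \ref{cor:2standard}, then absorb the residual $\varkappa$ by an $A\leftrightarrow B$ swap. The only (welcome) refinement is that where the paper cites Lemma \ref{lem:kap-wymian}, which as stated covers only the identity skew, you invoke the general swap case \eqref{propiso2:typ2} of Proposition \ref{prop:iso2} with the explicit choices $\beta=(\varphi^{-1})^{\alpha_0}$ and $\alpha=\alpha_0\varphi^{-1}$, whose verification is exactly as you compute.
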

\def\refv#1{\ref{bool:klasif1}.\eqref{#1}}
Consequently, we only need to classify all the structures $\kapers(\beta,{\goth V})$.
Recall, that there is no $\alpha\in S_{I_4}$ such that
$\varkappa\overline{\alpha}\in\Aut({\goth V})$, where $\goth V$ is among structures
listed in \eqref{veblist1}.
Consequently, from \ref{prop:iso2} we conclude with the following
\begin{lem}
  Let $\beta_1,\beta_2\in S_{I_4}$, 
  let $\goth V$ be among structures defined in \eqref{veblist1}.
  Then $\kapers(\beta_1,{\goth V}) \cong \kapers(\beta_2,{\goth V})$ iff
  $\beta_1,\beta_2$ are conjugate under an $\alpha$ such that 
  $\overline{\alpha}\in\Aut({\goth V})$.
\end{lem}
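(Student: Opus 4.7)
The plan is to read off the result directly from Proposition \ref{prop:iso2}, specialized to ${\goth N}_1 = {\goth N}_2 = {\goth V}$ and $\varphi_i := \beta_i$. That proposition delivers an exhaustive dichotomy for any isomorphism $f\colon \kapers(\beta_1,{\goth V}) \to \kapers(\beta_2,{\goth V})$: either $f$ preserves the pair $(A,B)$ (the format \eqref{propiso2:typ1} with conditions \eqref{propiso2:war1}, \eqref{propiso2:war2}), or $f$ interchanges $A$ and $B$ (the format \eqref{propiso2:typ2} with conditions \eqref{propiso2:war4}, \eqref{propiso2:war5}). In the first situation, $f$ is determined by some $\alpha\in S_{I_4}$ with $\overline{\alpha}\in\Aut({\goth V})$ and $\beta_2 = \beta_1^\alpha$, which is exactly the conjugacy condition asserted by the lemma.

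The crux of the argument, therefore, is to rule out the second alternative. Condition \eqref{propiso2:war4} requires $\varkappa\overline{\beta_2^{-1}\alpha}\in\Aut({\goth V})$; writing $\gamma := \beta_2^{-1}\alpha \in S_{I_4}$, this reads $\varkappa\overline{\gamma}\in\Aut({\goth V})$, which directly contradicts the fact recalled immediately before the lemma: for every $\goth V$ in the list \eqref{veblist1} there is no $\gamma\in S_{I_4}$ with $\varkappa\overline{\gamma}\in\Aut({\goth V})$. (That recalled fact is itself a consequence of \ref{lem:vebauty}, together with the observation that $\varkappa$ itself cannot be written as $\overline{\delta}$ for any $\delta\in S_{I_4}$, since $\varkappa$ exchanges each pair with its complementary disjoint pair while any $\overline{\delta}$ sends intersecting pairs to intersecting pairs.) Hence format \eqref{propiso2:typ2} is vacuously unavailable, and only format \eqref{propiso2:typ1} can realize an isomorphism.

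For the converse, given $\alpha\in S_{I_4}$ with $\overline{\alpha}\in\Aut({\goth V})$ and $\beta_2 = \beta_1^\alpha$, we define $f$ by the prescription \eqref{propiso1:typ1}: $f(p)=p$, $f(x_i) = x_{\alpha(i)}$ for $x\in\{a,b\}$, and $f(c_{\{i,j\}}) = c_{\{\alpha(i),\alpha(j)\}}$. Checking that $f$ preserves $\oplus$ on $A\cup B$ (the $A$-lines go to $A$-lines because $\overline{\alpha}\in\Aut({\goth V})$, and the $B$-lines are handled by the computation $c_{\varkappa\overline{\varphi_2^{-1}\alpha}(\{i,j\})} = c_{\varkappa\overline{\alpha\varphi_1^{-1}}(\{i,j\})} = f(c_{\varkappa\overline{\varphi_1^{-1}}(\{i,j\})})$, using $\beta_2 = \beta_1^\alpha$ which gives $\beta_2^{-1}\alpha = \alpha\beta_1^{-1}$) is routine and is precisely the sufficiency part of \ref{prop:iso2}\eqref{propiso2:typ1}. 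The only conceptual point worth emphasizing is the elimination of case \eqref{propiso2:typ2}; once that is in hand the argument is purely bookkeeping.
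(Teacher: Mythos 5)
Your main argument is correct and is essentially the paper's own route: the lemma is obtained by specializing Proposition \ref{prop:iso2} to ${\goth N}_1={\goth N}_2={\goth V}$, reading case \eqref{propiso2:typ1} as exactly the asserted conjugacy condition, and discarding case \eqref{propiso2:typ2} because no map of the form $\varkappa\overline{\gamma}$ is an automorphism of a structure in the list \eqref{veblist1}; the converse is the sufficiency half of \ref{prop:iso2}.

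The one weak point is your parenthetical justification of that recalled fact. Lemma \ref{lem:vebauty} characterizes which automorphisms \emph{of the form} $\overline{\varphi}$ a given $\goth V$ admits; it does not assert that every automorphism of $\goth V$ is of this form, and for $\VeblSpace(2)$ and ${\cal V}_5$ the full automorphism group of the underlying Veblen configuration is strictly larger than the group of maps $\overline{\varphi}$. Consequently, the observation that $\varkappa$ itself is not of the form $\overline{\delta}$ does not by itself exclude $\varkappa\overline{\gamma}$ from $\Aut({\goth V})$. The correct (and short) verification is direct: each $\goth V$ in \eqref{veblist1} has at least one line of the form $\topof(Y)$, the map $\varkappa\overline{\gamma}$ carries such a line onto a star $\starof(j)$, and no structure in \eqref{veblist1} has a star among its lines (by \ref{triaINveb} each $\starof(j)$ is either a freely contained triangle or not a collinearity clique at all, hence in either case not a line). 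Since the fact is true and the paper itself invokes it without proof at this point, this slip does not affect the validity of your overall argument, but the parenthetical as written is a non sequitur and should be replaced by the line-image computation above.
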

Thus, substituting \ref{lem:vebauty} and \ref{lem:vebtypy} to the definition of 
$\perspace(p,\overline{\varphi}\varkappa,{\goth N})$
finally, we obtain our final classification.
\begin{thm}\label{glowne2}
  Let ${\goth M} = \perspace(p,\overline{\varphi}\varkappa,{\goth N})$,
  where $\goth N$ is a Veblen configuration defined on $\sub_2(I_4)$.
  Then $\goth M$ is isomorphic to (exactly) one of the following
  %
\setcounter{enumi}{0}
\refstepcounter{enumi}  \label{klas:typ1}
\begin{ctext}(\theenumi)\quad    $\perspace(4,\varkappa,{\GrasSpace(I_4,2)})$,\space
    $\perspace(4,\overline{(1,2),(3,4)}\varkappa,{\GrasSpace(I_4,2)})$,\space
    $\perspace(4,\overline{(1),(2),(3,4)}\varkappa,{\GrasSpace(I_4,2)})$,\space
    $\perspace(4,\overline{(1),(2,3,4)}\varkappa,{\GrasSpace(I_4,2)})$,\space
    $\perspace(4,\overline{(1,2,3,4)}\varkappa,{\GrasSpace(I_4,2)})$. 
\end{ctext}
\vskip-0.5ex
\refstepcounter{enumi}  \label{klas:typ2}
\begin{ctext}(\theenumi)\quad    $\perspace(4,\varkappa,{\VeblSpace(2)})$,\space
    $\perspace(4,\overline{(1)(2)(3,4)}\varkappa,{\VeblSpace(2)})$,\space
    $\perspace(4,\overline{(1,2)(3)(4)}\varkappa,{\VeblSpace(2)})$,\space
    $\perspace(4,\overline{(1)(2,3,4)}\varkappa,{\VeblSpace(2)})$,\space
    $\perspace(4,\overline{(4)(1,2,3)}\varkappa,{\VeblSpace(2)})$,\space
    $\perspace(4,\overline{(1,2)(3,4)}\varkappa,{\VeblSpace(2)})$,\space
    $\perspace(4,\overline{(1,4)(2,3)}\varkappa,{\VeblSpace(2)})$,\space
    $\perspace(4,\overline{(1,2,3,4)}\varkappa,{\VeblSpace(2)})$.  
\end{ctext}
\vskip-0.5ex
\refstepcounter{enumi} \label{klas:typ3}
\begin{ctext}(\theenumi)\quad    $\perspace(4,{}\varkappa,{{\cal V}_5})$,\space
    $\perspace(4,\overline{(1)(3)(2,4)}\varkappa,{{\cal V}_5})$,\space
    $\perspace(4,\overline{(1)(2)(3,4)}\varkappa,{{\cal V}_5})$,\space
    $\perspace(4,\overline{(1)(2,3,4)}\varkappa,{{\cal V}_5})$,\space
    $\perspace(4,\overline{(3)(1,2,4)}\varkappa,{{\cal V}_5})$,\space
    $\perspace(4,\overline{(1,2,3,4)}\varkappa,{{\cal V}_5})$,\space
    $\perspace(4,\overline{(1,2)(3,4)}\varkappa,{{\cal V}_5})$. 
\end{ctext}
  Consequently, there are exactly $20$ isomorphism types of $\konftyp(15,4,20,3)$ skew perspectives with the skew
  determined by boolean completing.
\end{thm}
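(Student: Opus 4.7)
The plan is to combine three reductions that are already available in the paper: a reduction of the Veblen axis $\goth N$ to one of three canonical labellings, a reduction within each canonical form to a conjugacy class of $\sigma\in S_{I_4}$ under a suitable subgroup of $S_{I_4}$, and finally a verification that the three resulting lists are mutually disjoint in the isomorphism classification.

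First I would apply \ref{bool:klasif1} to replace $\goth N$ by one of the three standard Veblen configurations $\GrasSpace(I_4,2)$, $\VeblSpace(2)$, $\cal V_5$ in the list \eqref{veblist1}. The key inputs here are \ref{fct:vebetyk}, which says that every labelling of the Veblen configuration on $\sub_2(I_4)$ is isomorphic via $\overline{\alpha}$ or $\varkappa\overline{\alpha}$ to one of the six configurations in \veblisty, and \ref{cor:2standard} together with \ref{lem:kap-wymian}, which allow such an isomorphism to be absorbed into a change of $\varphi$ (with $\varkappa$ swapping the two members of each dual pair of Veblen configurations when needed). After this step, we only need to classify $\kapers(\beta,\goth V)$ for $\goth V\in\{\GrasSpace(I_4,2),\VeblSpace(2),\cal V_5\}$ and $\beta\in S_{I_4}$.

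Next, within a fixed $\goth V$, I would invoke the lemma immediately preceding the theorem, which says that $\kapers(\beta_1,\goth V)\cong \kapers(\beta_2,\goth V)$ iff $\beta_1$ and $\beta_2$ are conjugate under some $\alpha\in S_{I_4}$ with $\overline{\alpha}\in\Aut(\goth V)$. This is itself extracted from \ref{prop:iso2}: case \eqref{propiso2:typ1} yields exactly this condition, and case \eqref{propiso2:typ2} is automatically void here because it would require $\varkappa\overline{\varphi_2^{-1}\alpha}\in\Aut(\goth V)$, which by \ref{lem:vebauty} would force $\varkappa=\overline{\gamma}$ for some $\gamma\in S_{I_4}$ — impossible, since $\varkappa$ sends each $2$-subset to its complement and hence is not induced by any permutation of $I_4$. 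With this reduction in hand, the representative lists in the three parts (\ref{klas:typ1}), (\ref{klas:typ2}), (\ref{klas:typ3}) of the theorem are obtained by reading off the corresponding conjugacy class representatives provided by \ref{lem:vebtypy} (using \ref{lem:vebauty} to identify the action of $\Aut(\goth V)$).

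Finally I would argue that structures drawn from different blocks cannot be isomorphic. By \ref{prop:iso2} an isomorphism $\kapers(\varphi_1,\goth V_1)\cong\kapers(\varphi_2,\goth V_2)$ forces either $\overline{\alpha}$ or $\varkappa\overline{\varphi_2^{-1}\alpha}$ to carry $\goth V_1$ onto $\goth V_2$; in either case the abstract Veblen configurations $\goth V_1$ and $\goth V_2$ must be isomorphic (as the map $\varkappa\overline{\gamma}$ is still a bijection of $\sub_2(I_4)$). But the three canonical Veblen configurations $\GrasSpace(I_4,2),\VeblSpace(2),\cal V_5$ are pairwise non-isomorphic, which one sees most efficiently from their distinct numbers of star-triangles recorded in \ref{triaINveb} ($4$, $2$ and $1$ respectively). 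Summing the cardinalities obtained in the three blocks then gives the announced total of $20$ types, completing the classification. I expect the main technical nuisance to be the bookkeeping in the enumeration under \ref{lem:vebtypy} — namely verifying, case by case, that the conjugacy classes of $S_{I_4}$ under the smaller groups $\Aut(\GrasSpace(I_4,2))=S_{I_4}$, $\Aut(\VeblSpace(2))\cong S_2\times S_2$, $\Aut(\cal V_5)\cong S_3$ partition $S_{I_4}$ exactly as stated — rather than any genuinely conceptual difficulty.
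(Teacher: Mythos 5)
Your route is the same as the paper's: the printed proof of \ref{glowne2} is just the sentence preceding it (``substituting \ref{lem:vebauty} and \ref{lem:vebtypy} to the definition \dots we obtain our final classification''), resting on \ref{bool:klasif1} and the unnamed conjugacy lemma, exactly as you propose. Two points need repair, though. First, your disjointness argument is misstated: $\GrasSpace(I_4,2)$, $\VeblSpace(2)$ and ${\cal V}_5$ are \emph{not} pairwise non-isomorphic as abstract configurations --- every $\konftyp(6,2,4,3)$-configuration is the Veblen configuration, and the three differ only in their labelling by $\sub_2(I_4)$. The star-triangle count is an invariant of the labelling, not of the abstract structure, so the parenthetical ``as $\varkappa\overline{\gamma}$ is still a bijection of $\sub_2(I_4)$'' proves nothing. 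What saves the argument is that \ref{prop:iso2} forces the induced map on $C$ to have one of the two special forms: $\overline{\alpha}$ preserves the number of star-triangles (since $\overline{\alpha}(\starof(i))=\starof(\alpha(i))$), while $\varkappa\overline{\gamma}$ sends each member of the list \eqref{veblist1} to a member of \eqref{veblist2}, which by \ref{triaINveb} has no star-triangles at all; as the three canonical labellings have $4$, $2$, $1$ star-triangles, no cross-isomorphism exists. The same repair is needed for your dismissal of case \eqref{propiso2:typ2} within a fixed ${\goth V}$: \ref{lem:vebauty} only says which maps $\overline{\varphi}$ are automorphisms, not that every automorphism has this form, so it cannot by itself exclude $\varkappa\overline{\gamma}\in\Aut({\goth V})$; the exclusion again comes from the \eqref{veblist1}-versus-\eqref{veblist2} dichotomy, which the paper asserts just before its unnamed lemma.

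Second, and more seriously, the ``bookkeeping'' you defer is exactly where the proof fails to close. Reading off \ref{lem:vebtypy} for ${\goth V}=\VeblSpace(2)$ gives \emph{ten} representatives ($\id$; three classes of transpositions; two classes of $3$-cycles; two of type $(2,2)$; two of $4$-cycles), whereas block (\ref{klas:typ2}) of the theorem lists only eight, omitting $(1)(2,3)(4)$ and $(1,3,2,4)$; your plan as written therefore yields $5+10+7=22$ types, not the asserted $20$. Since $\Aut({\VeblSpace(2)})$ (among the maps $\overline{\varphi}$) is the order-$4$ group fixing $\{1,2\}$ and $\{3,4\}$ setwise, the transposition $(2,3)$ is not conjugate under it to $(1,2)$ or to $(3,4)$, so by the paper's own criterion $\kapers({(2,3)},{\VeblSpace(2)})$ is a type absent from the printed list. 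Either additional isomorphisms exist that the conjugacy criterion misses (contradicting \ref{prop:iso2}), or the theorem's list and its total of $20$ must be corrected; in any case a proof that merely ``reads off'' \ref{lem:vebtypy} does not deliver the statement as printed, and this discrepancy has to be confronted explicitly rather than left as routine bookkeeping.
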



\fi 


\let\bibtem\bibitem

\noindent Author's address:\newline
\ifcdn{Kamil Maszkowski, }\fi
Ma{\l}gorzata Pra{\.z}mowska, Krzysztof Pra{\.z}mowski\\
Institute of Mathematics, University of Bia{\l}ystok\\
ul. Cio{\l}kowskiego 1M\\
15-245 Bia{\l}ystok, Poland\\
e-mail: \ifcdn{\ttfamily{kamil33221@o2.pl}, }\fi {\ttfamily malgpraz@math.uwb.edu.pl},
{\ttfamily krzypraz@math.uwb.edu.pl}

\end{document}

\xymatrix{%
{\Delta_1:}
&
{c_{1,2}}\ar@{-}[d]\ar@(dr,ur)@{-}[dd]
&
{c_{3,4}}\ar@{-}[d]\ar@(dr,ur)@{-}[dd]
&
{c_{1,4}}\ar@{-}[d]\ar@(dr,ur)@{-}[dd]
\\
{\Delta_2:}
&
{a_{2}}\ar@{-}[d]
&
{a_{3}}\ar@{-}[dr]
&
{a_{4}}\ar@{-}[dl]
\\
{\Delta_3:}
&
{b_2}
&
{b_4}
&
{b_3}
}

The
lines $\LineOn(c_{1,2},c_{1,3}), \LineOn(b_3,b_4), \LineOn(a_2,a_3)$ pass through $c_{2,3}$
lines $\LineOn(c_{1,3},c_{1,4}), \LineOn(b_4,b_2), \LineOn(a_3,a_4)$ pass through $c_{3,4}$,
and
lines $\LineOn(c_{1,2},c_{1,4}), \LineOn(b_3,b_2), \LineOn(a_2,a_4)$ pass through $c_{2,4}$.
The point $b_1$ is the centre of perspective between $\Delta_1$ and $\Delta_2$,
$p$ is the centre for $\Delta_2$, $\Delta_3$,
and
$a_1$ is the centre for $\Delta_1$, $\Delta_3$
(Lines in the diagram join points which correspond each to other under respective
perspectivity).